\numberwithin{equation}{section}
\def\sO{{\mathscr O}}
\newcommand{\A}{\mathbb{A}}
\newcommand{\GG}{\mathbb{G}}
\newcommand{\KK}{\mathbb{K}}
\newcommand{\LL}{\mathbb{L}}
\newcommand{\NN}{\mathbb{N}}
\newcommand{\PP}{\mathbb{P}}
\newcommand{\QQ}{\mathbb{Q}}
\newcommand{\Ss}{\mathbb{S}}
\newcommand{\ZZ}{\mathbb{Z}}
\newcommand{\bVar}{\mathbf{Var}}
\newcommand{\Sch}{\mathbf{Sch}}
\newcommand{\SmProjVar}{\mathbf{SmProjVar}}
\newcommand{\cal}{\mathcal}
\def\cC{{\cal C}}
\def\cE{{\cal E}}
\def\cO{{\cal O}}
\def\cZ{{\cal Z}}
\def\cX{\mathcal{X} }
\def\and{\quad{\rm and}\quad}
 \DeclareMathOperator{\Ext}{Ext}
  \DeclareMathOperator{\Hom}{Hom}
\newtheorem{prop}{Proposition}[section]
\newtheorem{theo}[prop]{Theorem}
\newtheorem{coro}[prop]{Corollary}
\newtheorem{rema}[prop]{Remark}
\newtheorem{defi}[prop]{Definition}
\newtheorem{conj}[prop]{Conjecture}
\def\beq{\begin{equation}}
\def\eeq{\end{equation}}
\def\PP{\mathbb{P}}
\def\cO{\mathcal{O}}
\def\Chow{\mathrm{Chow}}
\def\DM{\mathrm{DM}}
\def\lim{\mathrm{lim}}
\def\dim{\mathrm{dim}}
\def\Bun{\mathrm{Bun}}
\def\Pic{\mathrm{Pic}}
\def\eff{\mathrm{eff}}
\def\gm{\mathrm{gm}}
\title[Motivic decompositions of moduli of vector bundles on curves]{Motivic decompositions of moduli spaces of vector bundles on curves}
\author[T. G\'omez]{Tom\'as L. G\'omez}
\address{Instituto de Ciencias Matem\'aticas (CSIC-UAM-UC3M-UCM), Nicol\'as Cabrera 15, Campus Cantoblanco UAM, 28049 Madrid, Spain}
\email{tomas.gomez@icmat.es}
\author[K.-S. Lee]{Kyoung-Seog Lee}
\address{University of Miami, Department of Mathematics, 1365 Memorial Drive, Ungar 515, Coral Gables, FL 33146} 
\email{kyoungseog02@gmail.com}
\subjclass[2010]{14C15, 14D20, 14F05}
\keywords{Grothendieck ring of varieties, Chow and Voevodsky motives, symmetric products of curves, motivic zeta function, moduli spaces of vector bundles on curves, motivic Poincar\'{e} polynomial, motivic decomposition.}
\begin{document}

\begin{abstract}
Let $r \geq 2, d$ be two integers which are coprime to each other. Let $C$ be a smooth projective curve of genus $g \geq 2$ and $M(r,L)$ be the moduli space of rank $r$ stable vector bundles on $C$ whose determinants are isomorphic to a fixed line bundle $L$ of degree $d$ on $C.$ In this paper, we study motivic decomposition of $M(r,L)$ for $r=2, 3$ cases. We give a new proof of a version of the main result of \cite{Lee}. We also found a new motivic decomposition of $M(3,L).$
\end{abstract}
\maketitle

\section{Introduction}

Let $\KK$ be an algebraically closed field whose characteristic is 0. Let $r \geq 2, d$ be two integers which are coprime to each other. Let $C$ be a smooth projective curve of genus $g \geq 2$ over $\KK$ and $M(r,L)$ be the moduli space of rank $r$ stable vector bundles on $C$ whose determinants are isomorphic to a fixed line bundle $L$ of degree $d$ on $C.$ It turns out that $M(r,L)$ is a beautiful algebraic variety which reflects many properties of $C$ and there have been intensive works studying various aspects of $M(r,L).$ For examples, cohomology groups and motives of $M(r,L$) are known to be closely related to those invariants of $C.$ Recently bounded derived category of coherent sheaves on $M(r,L)$ also draws lots of attentions. Because $M(r,L)$ is a Fano variety, its derived category determines the variety and has a nontrivial semiorthogonal decomposition. Moreover, there is a natural functor from the derived category of $C$ to the derived category of $M(r,L).$ Let $D(C)$ (resp. $D(M(r,L))$) be the bounded derived category of coherent sheaves on $C$ (resp. $M(r,L)$). It is well-known that there exists a Poincar\'{e} bundle $\cE$ on $C \times M.$ Then the Poincar\'{e} bundle $\cE$ gives a Fourier-Mukai transform
$$ \Phi_{\cE} : D(C) \to D(M(r,L)) $$
defined by $\Phi_{\cE}(F) := {Rp_{M(r,L)}}_*({Lp_C}^*(F) \otimes^L \cE)$, where $F$ is an element in $D(C)$ and $p_C$ (resp. $p_{M(r.L)}$) is the projection map from $C \times M(r,L)$ to $C$ (resp. $M(r,L)$). It was proved that $\Phi_{\cE}$ is fully faithful for every smooth projective curve of genus greater than or equal to 2 in \cite{FK, Narasimhan1, Narasimhan2} when $r=2.$ It was also proved that $\Phi_{\cE}$ is fully faithful for every smooth projective curve of sufficiently large genus in \cite{BS} when $r \geq 3, d=1.$

\bigskip

It is a natural and important task to understand the full semiorthogonal decomposition of $D(M(r,L)).$ M. S. Narasimhan conjectured that $D(M(2,L))$ will have a semiorthogonal decomposition consisting of two copies of the derived category of a point, two copies of the derived category of $C,$ $\cdots,$ two copies of the derived category of $C_{g-2}$ and one copy of the derived category of $C_{g-1},$ where $C_{k}$ denotes the $k$-th symmetric power of $C.$ We were informed that Belmans, Galkin and Mukhopadhyay stated the same conjecture in \cite{BGM} independently.

\begin{conj}\label{SODrank2}
The derived category of $M(2,L)$ has the following semiorthogonal decomposition
$$ D(M(2,L))=\langle D(pt), D(pt), D(C), D(C), \cdots, D(C_{g-2}), D(C_{g-2}), D(C_{g-1}) \rangle. $$
\end{conj}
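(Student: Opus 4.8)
The plan is to build the decomposition piece by piece: for each $0\le k\le g-1$ I would construct one or two Fourier--Mukai functors $D(C_k)\to D(M(2,L))$, prove that each is fully faithful, check that the whole collection is semiorthogonal in the stated order, and finally prove that the images generate $D(M(2,L))$. Two of the building blocks are essentially free. The $k=1$ block is exactly the functor $\Phi_{\cE}$ of the introduction, which is fully faithful by \cite{FK,Narasimhan1,Narasimhan2}; the two $k=0$ blocks $D(pt)$ are realized by line bundles on $M(2,L)$, say $\cO$ and the ample generator of $\Pic M(2,L)$. The real content is to produce the analogous kernels on $C_k\times M(2,L)$ for $2\le k\le g-1$ and to control their mutual orthogonality and total span.

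First I would realize $M(2,L)$ through the geometry of Bradlow--Thaddeus pairs $(E,s)$, where $E$ has rank $2$ and determinant $L$ and $s\in H^0(E)$. Varying the stability parameter produces a finite chain of smooth projective moduli spaces $M_0,M_1,\dots,M_N$ joined by Thaddeus flips: the extremal space $M_0$ is a projective bundle (a projective space in the simplest case) whose derived category carries a full exceptional collection by Beilinson, while forgetting the section at the other end recovers $M(2,L)$. Crucially, the flip loci are projective bundles over the symmetric powers $C_k$, so each symmetric power enters geometrically and precisely in the predicted range.

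Next I would propagate a semiorthogonal decomposition along this chain. Across each wall one has a flip relating the two sides, and I would use the window / grade-restriction machinery (Halpern-Leistner; Ballard--Favero--Katzarkov; Donovan--Segal) to compare the two derived categories as window subcategories of a common equivariant category. Each flip then modifies the decomposition by inserting or deleting a block equivalent to $D(C_k)$ coming from the flip center, and the combinatorics of the wall-crossing should force $D(C_{g-1})$ to appear once and each $D(C_k)$ with $0\le k\le g-2$ to appear twice. Tracking these insertions from $M_0$ (with its Beilinson collection) to the terminal space should assemble exactly the list $\langle D(pt),D(pt),D(C),D(C),\dots,D(C_{g-2}),D(C_{g-2}),D(C_{g-1})\rangle$.

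The main obstacle is completeness: showing that the constructed blocks actually generate all of $D(M(2,L))$, rather than merely forming a semiorthogonal collection inside it. Here the motivic decomposition established in this paper and in \cite{Lee} supplies the decisive consistency check: the class $[M(2,L)]$ decomposes into contributions built from the $[C_k]$ and powers of $\LL$ that match the predicted blocks, so Euler characteristics and $K$-theory classes add up correctly and no numerical invariant is left over for an orthogonal complement. Upgrading this numerical coincidence to an honest generation statement is exactly where the window estimates must be made sharp, i.e.\ one must verify that the grade-restriction windows on the two sides of every wall differ by precisely the category predicted by the flip center, so that nothing is lost across the chain. I expect this generation argument, together with the fully faithfulness of the higher functors $D(C_k)\to D(M(2,L))$ for $k\ge 2$, to be the technical heart of the proof.
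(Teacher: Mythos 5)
The first thing to note: this statement is Conjecture \ref{SODrank2} in the paper, and the paper does \emph{not} prove it. What the paper proves is a motivic analogue (Theorem \ref{rank2}, obtained via the Harder--Narasimhan recursion, del Ba\~no's inversion formula, and the decomposition of the motivic zeta function), which is offered only as evidence compatible with the conjecture. So your proposal cannot be matched against a proof in the paper; it has to stand on its own, and as written it does not close the gap.

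The strategy you outline --- Thaddeus pair moduli spaces, wall-crossing via window categories, flip centers that are projective bundles over the $C_k$ --- is a sensible program, and it is essentially the route later pursued in the literature on this conjecture. But two of its steps are genuine gaps rather than deferred technicalities. First, full faithfulness of the functors $D(C_k)\to D(M(2,L))$ for $k\ge 2$, semiorthogonality of the blocks in the stated order, and the bookkeeping of which blocks get inserted at each wall are all asserted (``should force'', ``I expect'') rather than proved; comparing derived categories across a flip requires verifying the window/grade-restriction hypotheses wall by wall, and at the level of detail given nothing guarantees the inserted blocks are admissible copies of $D(C_k)$ at all. Second, and more fundamentally, your completeness argument is logically invalid: equality of classes in $\widehat{K_0}(\bVar)$ or $\widehat{K_0}(\Chow^{\eff})$ cannot prove generation, because of the existence of (quasi-)phantom categories --- nonzero admissible subcategories with vanishing Grothendieck group and Hochschild homology. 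The motivic decomposition proved in this paper is consistent with the conjecture, but it cannot certify that the orthogonal complement of your blocks vanishes; generation must be established geometrically (e.g., by showing the windows on the two sides of every wall match exactly), and that is precisely the step you defer as ``the technical heart.''
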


On the other hand, Orlov predicted that derived categories of coherent sheaves and motives of algebraic varieties will be closely related in \cite{Orlov}. When $r=2, d=1,$ it turns out that the motive of $M(2,L)$ in any semisimple category of motives has motivic decomposition which is compatible with the above conjecture (cf. \cite{Lee}). The motivic decomposition obtained in \cite{Lee} has the following consequence.

\begin{theo}\cite{Lee}\label{rank2}
Let $r=2, d=1.$ The motivic Poincar\'e polynomial $\chi(M(2,L))$ is
$$ \sum_{k=0}^{g-2} \chi(C_{k})  (\mathbb{L}^{ k} + \mathbb{L}^{ 3g-3-2k}) + \chi(C_{g-1})  \mathbb{L}^{ g-1}. $$
\end{theo}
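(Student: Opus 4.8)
The plan is to obtain the formula as a formal consequence of a motivic decomposition of $M(2,L)$, so that the genuine work lies in producing that decomposition rather than in the bookkeeping that extracts $\chi$. Concretely, I would first establish, in the relevant semisimple category of motives, an isomorphism of the shape
$$ \mathfrak{h}(M(2,L)) \;\cong\; \bigoplus_{k=0}^{g-2}\Bigl(\mathfrak{h}(C_k)\otimes\mathbb{L}^{k}\;\oplus\;\mathfrak{h}(C_k)\otimes\mathbb{L}^{3g-3-2k}\Bigr)\;\oplus\;\mathfrak{h}(C_{g-1})\otimes\mathbb{L}^{g-1}, $$
where $\mathbb{L}$ denotes the Lefschetz motive and $\dim M(2,L)=3g-3$ fixes the top twist $\mathbb{L}^{3g-3}$ (the $k=0$ summand of the second family). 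This is precisely the decomposition of \cite{Lee}, which we reprove here; the two families of summands for $0\le k\le g-2$ and the single middle summand at $k=g-1$ mirror the two copies of $D(C_k)$ and the one copy of $D(C_{g-1})$ appearing in Conjecture \ref{SODrank2}, the coincidence of twists $k=3g-3-2k$ exactly at $k=g-1$ explaining why that term occurs only once.

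Granting the decomposition, the theorem is immediate. The motivic Poincar\'e polynomial $\chi$ is additive over direct sums and sends a Tate twist to a power of $\mathbb{L}$, i.e.\ $\chi(\mathfrak{h}(X)\otimes\mathbb{L}^{n})=\mathbb{L}^{n}\,\chi(\mathfrak{h}(X))$. Applying $\chi$ to the displayed isomorphism summand by summand and collecting the contributions attached to each symmetric power $C_k$ gives
$$ \chi(M(2,L))=\sum_{k=0}^{g-2}\chi(C_k)\bigl(\mathbb{L}^{k}+\mathbb{L}^{3g-3-2k}\bigr)+\chi(C_{g-1})\,\mathbb{L}^{g-1}, $$
which is the asserted identity; since the decomposition is finite, no limiting argument is required.

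The main obstacle is therefore the construction of the decomposition itself. The geometric input is the Hecke correspondence linking $M(2,L)$ to the symmetric powers $C_k$ through the associated projective bundles of Hecke modifications; the difficulty is to show that these correspondences cut the motive of $M(2,L)$ into exactly the listed Tate-twisted pieces, correctly matching each $C_k$ with the twists $\mathbb{L}^{k}$ and $\mathbb{L}^{3g-3-2k}$ and verifying that no further summands survive. I expect tracking the precise Tate twists and the transition at $k=g-1$ to be the delicate point. As an independent consistency check that does not require the full categorical statement, one can instead verify the identity at the level of generating functions: comparing $\sum_k\chi(C_k)t^k$, governed by the motivic zeta function $Z_C(t)$ of $C$, with the Harder--Narasimhan--Atiyah--Bott--Newstead expression for $\chi(M(2,L))$ reduces the claim to a rational-function identity, whose verification reproduces the same formula.
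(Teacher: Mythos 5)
Your proposal has a genuine gap, in both of its halves. The primary route reduces the theorem to the motivic isomorphism $h(M(2,L))\cong\bigoplus_{k=0}^{g-2}\bigl(h(C_k)\otimes\LL^{k}\oplus h(C_k)\otimes\LL^{3g-3-2k}\bigr)\oplus h(C_{g-1})\otimes\LL^{g-1}$, which is a strictly stronger statement than the one to be proven, and you then name the construction of that isomorphism (via Hecke correspondences) as ``the main obstacle'' without carrying it out. As you yourself observe, passing from the decomposition to the $\chi$-formula is immediate (additivity of $\chi$ plus $\chi(h(X)\otimes\LL^{n})=\chi(X)\,\LL^{n}$), so all the mathematical content has been deferred to an unproven claim; as written, the argument only records that the theorem follows from the main result of \cite{Lee}, which is precisely the result being reproved.

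Your fallback sketch --- comparing the motivic zeta function with a Harder--Narasimhan expression --- is in fact the paper's actual route (Section 4), but your one-sentence description omits the ingredient that makes it work and mischaracterizes the target as ``a rational-function identity.'' The paper first uses del Ba\~no's inversion formula (Theorem \ref{Bano;inversion}) to write $\chi(M(2,L)) = Z(C,\LL) - \chi(J(C))\,\frac{\LL^{g}}{(1-\LL)(1-\LL^{2})}$. The crux is then Proposition \ref{prop:symmpro} and Corollary \ref{coro:symmpro}: for $g\le k\le 2g-2$ one has $\chi(C_k)=\chi(J(C))\sum_{l=0}^{k-g}\LL^{l}+\chi(C_{2g-2-k})\,\LL^{k-g+1}$, and for $k\ge 2g-1$ the projective-bundle formula over $J(C)$; these are proved geometrically via the two Abel--Jacobi maps from $C_k$ and $C_{2g-2-k}$ to $\Pic^{k}(C)$ and the Brill--Noether stratification (or via del Ba\~no's $\lambda$-decomposition together with Poincar\'e duality). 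Only with this lemma can the infinite series $Z(C,\LL)=\sum_{k\ge 0}\chi(C_k)\LL^{k}$ be folded into the finitely many classes $\chi(C_k)$ with $k\le g-1$ plus a Jacobian tail (Proposition \ref{DecZeta-chow}), and that tail exactly cancels the unstable contribution, yielding the theorem. Without it, the identity you propose to ``verify'' is an equality in $\widehat{K_0}(\Chow^{\eff})$ involving the infinitely many distinct classes $\chi(C_k)$, $k\ge g$, not an identity of rational functions in $\LL$, so no purely formal verification is available. In short, the missing idea in both halves of your proposal is the decomposition of the motives of the symmetric products $C_k$ beyond the middle degree; that decomposition, not the bookkeeping, is the actual content of the paper's proof.
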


It is natural to attempt to find similar formulas for other rank and degree cases. In this paper we provide new decompositions of motivic Poincar\'e polynomials of moduli spaces of rank three stable vector bundles on curves.

\begin{theo}\label{rank3}
Let $r=3, d=1.$ The motivic Poincar\'e polynomial $\chi(M(3,L))$ is
$$ \sum_{k_1+k_2 < 2(g-1)} \chi(C_{k_1} \times C_{k_2})  (\mathbb{L}^{ k_1+2k_2} + \mathbb{L}^{ 8g-8-2k_1-3k_2}) + $$
$$ \sum_{k_1+k_2=2(g-1), k_1<g-1} \chi(C_{k_1} \times C_{k_2})  (\mathbb{L}^{ k_1+2k_2} + \mathbb{L}^{ 8g-8-2k_1-3k_2}) + \chi(C_{g-1} \times C_{g-1})  \LL^{ 3(g-1)}. $$
\end{theo}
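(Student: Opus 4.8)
The plan is to compute $\chi(M(3,L))$ by feeding the motivic Harder--Narasimhan (Atiyah--Bott/Laumon--Rapoport) recursion into the motivic zeta function, and only at the end re-expanding the resulting rational expression into symmetric powers. First I would reduce the fixed-determinant problem to the moduli stack. Since $\gcd(3,d)=1$, semistability coincides with stability, the stack $\Bun_{3,d}^{ss}$ is a $\GG_m$-gerbe over $M(3,d)$, and the action of the $3$-torsion of the Jacobian by twisting acts trivially on cohomology, so that $\chi(M(3,d))=\chi(M(3,L))\,\chi(\mathrm{Jac}(C))$. Combining these gives
$$\chi(M(3,L)) = \frac{(\mathbb{L}-1)\,\chi(\Bun_{3,d}^{ss})}{\chi(\mathrm{Jac}(C))}.$$

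Next I would invoke the closed formula for the stack of all rank-$r$ bundles,
$$\chi(\Bun_{r,d}) = \frac{\chi(\mathrm{Jac}(C))}{\mathbb{L}-1}\,\mathbb{L}^{(r^2-1)(g-1)}\prod_{i=2}^{r} Z(C,\mathbb{L}^{-i}),$$
where $Z(C,t)=\sum_{k\ge 0}\chi(C_k)\,t^{k}$ is the motivic zeta function, together with the Harder--Narasimhan stratification. For $r=3$ there are exactly four stratum types, indexed by the slope-decreasing decompositions $(3,d)$, $(1,\ast)+(2,\ast)$, $(2,\ast)+(1,\ast)$ and $(1,\ast)+(1,\ast)+(1,\ast)$; each lower stratum contributes the product of the semistable classes of its pieces, Tate-twisted by the codimension read off from the Euler form (the relevant $\Ext^1$-dimensions). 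Solving this one linear relation for $\chi(\Bun_{3,d}^{ss})$, and substituting $\chi(\Bun_{1,d}^{ss})=\chi(\mathrm{Jac}(C))/(\mathbb{L}-1)$ and the rank-two semistable class (equivalently Theorem~\ref{rank2}), produces an explicit rational function of $\mathbb{L}$, $\chi(\mathrm{Jac}(C))$ and the two values $Z(C,\mathbb{L}^{-2})$, $Z(C,\mathbb{L}^{-3})$.

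I would then substitute the expansion $Z(C,t)=\sum_k \chi(C_k)t^{k}$ and reorganize the product of the two zeta values into a double series in $(k_1,k_2)$; multiplicativity $\chi(C_{k_1})\chi(C_{k_2})=\chi(C_{k_1}\times C_{k_2})$ supplies exactly the coefficients in the statement. The two exponent families $\mathbb{L}^{k_1+2k_2}$ and $\mathbb{L}^{8g-8-2k_1-3k_2}$ should then be matched by means of the functional equation $Z(C,1/(\mathbb{L}t))=\mathbb{L}^{1-g}t^{2-2g}Z(C,t)$, which is Poincar\'e duality on $M(3,L)$, whose dimension is $8(g-1)$; note that at $k_1=k_2=g-1$ both families give $\mathbb{L}^{3(g-1)}$, explaining the single self-dual top term.

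The hard part will be the bookkeeping in this last step: converting the a priori infinite double series into the finite sum with the precise ranges of the statement. The key input is that the Abel--Jacobi map $C_k\to\mathrm{Jac}(C)$ is a $\PP^{k-g}$-bundle once $k>2g-2$, so the classes $\chi(C_k)$ stabilize beyond the threshold $k=2(g-1)$. This is exactly the value separating the three regimes, and I expect that isolating these stable tails is what forces the split between the open range $k_1+k_2<2(g-1)$, the boundary line $k_1+k_2=2(g-1)$ with $k_1<g-1$, and the lone term $\chi(C_{g-1}\times C_{g-1})\,\mathbb{L}^{3(g-1)}$. The asymmetry on the boundary (only $k_1<g-1$, not $k_1>g-1$) must arise from the unequal weights $1$ and $2$ attached to the two factors, and verifying that the stabilizing tails cancel against the denominator factors $(\mathbb{L}^{i}-1)$ inherited from the recursion, leaving precisely these boundary contributions, is the crux of the argument.
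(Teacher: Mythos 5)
Your overall skeleton --- Harder--Narasimhan stratification, the motivic zeta function, and the geometry of symmetric powers --- is essentially the paper's own strategy (Theorem \ref{rank3chow} is proved exactly this way, starting from $\chi(\Bun_{3,L})=Z(C,\LL)\,Z(C,\LL^{2})$ and removing the unstable strata). But two of your steps have genuine problems. First, the rank-2 input: the HN strata of $\Bun_{3,1}$ contain rank-2 semistable pieces of every degree $d_2\le 0$ (with $d_1+d_2=1$, $d_1>d_2/2$), and for \emph{even} $d_2$ strictly semistable bundles exist, so $\chi(\Bun^{ss}_{2,d_2})$ is \emph{not} ``equivalently Theorem~\ref{rank2}'': there is no gerbe over a moduli space of stable bundles there, and that class is not $(\LL-1)^{-1}$ times the class of any such moduli space. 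You must either run the rank-2 recursion again for those degrees, or bypass the issue with the Laumon--Rapoport inversion formula (del Ba\~no's Theorem \ref{Bano;inversion}, which is what the paper invokes), which expresses the answer purely in terms of the full stacks $\Bun_{n_i}$, i.e.\ zeta products. Second, division by $\chi(\mathrm{Jac}(C))$ is not a legitimate operation in $\widehat{K_0}(\Chow^{\eff})$, where $\chi(J(C))$ is not obviously a unit for the $\LL$-adic completion; the determinant fibration has to be handled by factoring a common $\chi(J(C))$ out of an identity, and the triviality of the $J[3]$-action, which you quote at the level of cohomology, is needed at the level of classes of Chow motives (this is in del Ba\~no's work, not a formal consequence of the cohomological statement).

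More seriously, the step you explicitly defer (``the bookkeeping \dots is the crux'') is where the theorem is actually proved, and the input you propose for it is insufficient. The target formula contains $\chi(C_k)$ for $k$ up to $2g-2$, i.e.\ in the range $g\le k\le 2g-2$ where $C_k$ is \emph{not} a projective bundle over $J(C)$, so stabilization beyond $k>2g-2$ cannot by itself produce the three ranges of the statement. What is needed is the finer decomposition (the paper's Proposition \ref{prop:symmpro} and Corollary \ref{coro:symmpro})
$$ \chi(C_k)=\chi(J(C))\sum_{l=0}^{k-g}\LL^{l}+\chi(C_{2g-2-k})\,\LL^{k-g+1},\qquad g\le k\le 2g-2, $$
proved by stratifying the Abel--Jacobi map by Brill--Noether loci (equivalently, by the $\lambda$-ring expansion of $h(C_k)$ plus Poincar\'e duality --- this is where your functional equation actually enters, but as a statement about the summands $\lambda^{a}h^1(C)$, not just about $Z(C,t)$ as a rational function). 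With this lemma both the HN-recursion output and the asserted right-hand side reduce to expressions in $\chi(C_k)$, $k\le g-1$, and $\chi(J(C))$, and the paper then verifies their equality by a concrete computation: the $\chi(J(C))^{2}$-terms cancel identically, the $\chi(J(C))$-terms agree via an explicit rational-function identity, and the remaining terms match after splitting the claimed sum according to whether $k_1,k_2\le g-1$ or one index lies in $[g,2g-2]$. Your predictions about how the ranges and the boundary asymmetry arise describe this outcome, but they do not verify it; as written, the proposal stops exactly where the proof begins.
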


A remarkable property of these formulas is that they only involve sums of products of symmetric products of $C$. \\

Using the same strategy we can also obtain the same motivic decompositions of the moduli space in the dimensional completions of Grothendieck ring of varieties or Voevodsky motives. See Theorem \ref{decompositionVoevodsky} for precise statements.

\begin{theo}
If $r=2,3,$ $d=1,$ then the class of $M(r,L)$ has a decomposition of the same type as above in $\widehat{K_0}(\bVar)$ and $K_0(\widehat{\DM}_{\gm}).$
\end{theo}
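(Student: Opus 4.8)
The plan is to lift the entire computation underlying Theorems \ref{rank2} and \ref{rank3} from the target ring of the motivic Poincar\'e polynomial $\chi$ to the completed rings $\widehat{K_0}(\bVar)$ and $K_0(\widehat{\DM}_{\gm})$ themselves. The observation driving this is that $\chi$ is (the restriction of) a continuous ring homomorphism which sends the class of the $n$-th symmetric product $[C_n]$ to $\chi(C_n)$ and the Lefschetz class $\mathbb{L}$ to $\mathbb{L}$; consequently any identity among classes and Tate twists in the completed ring descends, after applying $\chi$, to the corresponding identity of motivic Poincar\'e polynomials. So it suffices to produce the decomposition one level up.

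First I would recall the closed formula expressing the class $[M(r,L)]$ (resp. the Voevodsky motive of $M(r,L)$) in terms of the motivic zeta function $Z(C,t)=\sum_{n\ge 0}[C_n]\,t^n$ and powers of $\mathbb{L}$; this comes from the Harder--Narasimhan--Atiyah--Bott recursion for the stack $\Bun_{r,d}$ together with the passage from the stack to the coarse space $M(r,L)$, which holds in $K_0(\bVar)$ and, by the known computations of the motives of these moduli (del Ba\~no, Hoskins--Pepin Lehalleur), in $\DM_{\gm}$. Next I would pass to the completion: in $\widehat{K_0}(\bVar)$ and $K_0(\widehat{\DM}_{\gm})$ the inverse $\mathbb{L}^{-1}$ is topologically nilpotent, so the zeta series and the geometric-series inversions of the factors $(1-\mathbb{L}^{-i})$ appearing in the recursion converge, and every manipulation used in the $\chi$-level proof becomes a legitimate identity of classes (resp. motives). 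With this in place I would carry out, verbatim in the completed ring, the same partial-fraction / wall-crossing rearrangement that produced the formulas in Theorems \ref{rank2} and \ref{rank3}: using the rationality of $Z(C,t)$ for a curve, the terms $[C_{k_1}\times\cdots\times C_{k_{r-1}}]$ and their Tate twists collect into exactly the finite sums displayed there, with all tails in negative powers of $\mathbb{L}$ cancelling. Applying $\chi$ then recovers the two theorems, confirming that nothing has been lost and that the completed-ring decomposition is of the same type.

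The main obstacle is precisely this last cancellation. At the level of the motivic Poincar\'e polynomial one is entitled to compare coefficients and invoke numerical cancellations, but in $\widehat{K_0}(\bVar)$, where $\chi$ has a large kernel and zero-divisors abound, one must show the rearrangement is an honest identity of classes, not merely an equality after realization. This forces one to keep track of the exact motivic, not merely numerical, content of the zeta-function identities, i.e. to verify that the rational function $Z(C,t)$ and the recursion combine so that the negative-power tails cancel as classes/motives, and to confirm that the resulting expression genuinely lies in the subring generated by the $[C_n]$ and $\mathbb{L}$. For Voevodsky motives one must additionally check that the relevant class lifts to $K_0(\widehat{\DM}_{\gm})$ and that the Tate twist $\mathbb{L}=\QQ(1)[2]$ plays the same formal role in the completion, after which the identical manipulation applies and yields the stated decomposition.
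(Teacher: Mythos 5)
Your high-level plan coincides with the paper's actual strategy for this theorem: prove the decomposition directly in the completed rings by combining the known class of the stack $\Bun_{r,L}$ with a Harder--Narasimhan computation of the unstable strata and a class-level (not merely numerical) decomposition of the zeta function, and then pass to Voevodsky motives. You also correctly isolate the key requirement, namely that the symmetric-product/zeta identities must hold as classes; the paper supplies exactly this in Proposition \ref{prop:symmpro in Grothendieck ring} and Proposition \ref{DecZeta-var}. However, two of your concrete steps fail, and both stem from the same misconception about the completions involved. The motivic Poincar\'e polynomial $\chi$ takes values in $\widehat{K_0}(\Chow^{\eff})$, the $(\LL)$-adic completion, in which $\LL$ is topologically nilpotent and in particular not invertible; by contrast $\widehat{K_0}(\bVar)$ is the completion of $K_0(\bVar)[\LL^{-1}]$ along the dimension filtration, in which $\LL$ is invertible and it is $\LL^{-1}$ that is topologically nilpotent (Definition \ref{def:dimensional-completion}; the remark at the end of Section 2 stresses precisely this difference). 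Since a ring homomorphism sends units to units, there is no ring homomorphism $\widehat{K_0}(\bVar)\to\widehat{K_0}(\Chow^{\eff})$ sending $\LL$ to $\LL$, continuous or otherwise. Hence your opening claim that identities in the completed ring ``descend, after applying $\chi$,'' to identities of motivic Poincar\'e polynomials, and your closing assertion that applying $\chi$ recovers Theorems \ref{rank2} and \ref{rank3}, are both unavailable. The paper never uses such a descent: the Chow-level and $\widehat{K_0}(\bVar)$-level computations are kept parallel but logically separate.

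The same misconception breaks the proposed ``verbatim'' transport of the rearrangement. The $(\LL)$-adic proofs start from $Z(C,\LL)$ and $Z(C,\LL)\, Z(C,\LL^{2})$, i.e.\ the zeta function evaluated at positive powers of $\LL$; in $\widehat{K_0}(\bVar)$ these series diverge, because the summand $[C_k]\,\LL^{ik}$ has dimension $(i+1)k\to+\infty$, which does not tend to zero in the dimensional topology. What converges there is the dual expression of Behrend--Dhillon and Hoskins--Lehalleur, $[\Bun_{\mathrm{SL}_r}]=\LL^{(r^2-1)(g-1)}\prod_{i=2}^{r}Z(C,\LL^{-i})$, and accordingly the paper proves the negative-power analogue of the zeta decomposition (Proposition \ref{DecZeta-var}, with arguments $\LL^{-i}$, $i\geq 2$) instead of reusing Proposition \ref{DecZeta-chow}. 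The unstable contribution is then computed directly as a class: for $r=2$ the paper stratifies the extensions $0\to L_1\to E\to L_2\to 0$ over Brill--Noether strata of $\Pic^{d}(C)$ and sums the resulting geometric series in $\LL^{-1}$, obtaining $[J(C)]\,\LL^{g}/((\LL-1)(\LL^{2}-1))$, which cancels exactly the Jacobian tail in Proposition \ref{DecZeta-var}; the $r=3$ case is handled analogously. Finally, the Voevodsky statement is not obtained by redoing the manipulation but simply by applying the ring homomorphism $\chi_c:\widehat{K_0}(\bVar)\to K_0(\widehat{\DM}_{\gm})$, which does exist because both sides carry dimensional completions. So your plan is salvageable, but only after replacing the positive-power zeta series by their negative-power counterparts (equivalently, invoking the functional equation) and abandoning the claimed descent to the Chow-level theorems.
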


In this article, we provide a uniform way to obtain the decompositions. In particular we found a very simple proof of Theorem \ref{rank2}. We expect this strategy will work for other cases. Let us briefly explain our method to provide new decomposition of the motivic Poincar\'e polynomials of moduli spaces.  Basically, we will use the classical strategy of Harder and Narasimhan \cite{Harder, HN} which now becomes a standard way to study cohomology groups of the moduli spaces of stable bundles. First, the class of the moduli stack $\Bun_{r,d}$ is computed in (various) completions of the Grothendieck rings of varieties, Chow motives or Voevodsky motives (cf. \cite{Bano;thesis, Bano2, BD, HL1, HL2}). We can compare the computation of the classes of $\Bun_{r,d}$ in $\widehat{K_0}(\bVar)$ (cf. \cite{BD}), $\widehat{K_0}(\Chow^{\eff})$ (cf. \cite{Bano;thesis, Bano2}) and $K_0(\widehat{\DM}_{\gm})$ (cf. \cite{HL1, HL2}) using the following description of motivic zeta function. (The following formula in the Grothendieck ring of varieties is known to experts (cf. \cite{GPHS, Heinloth}). See Proposition \ref{motiviczeta} for more details.)

\begin{prop}[Motivic zeta function of curve]
We have the following identity in $\widehat{K_0}(\Chow^{\eff}).$
$$ Z(C,t) = \frac{(1+t)^{h^1(C)}}{(1-t)(1-\LL t)} $$
\end{prop}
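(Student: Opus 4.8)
The plan is to compute the Kapranov series $Z(C,t)=\sum_{n\ge 0}[C_n]\,t^n$, where $C_n=\operatorname{Sym}^n C$, by feeding the motivic decomposition of $C$ into its symmetric powers. First I would recall the fundamental decomposition of the Chow motive of a curve,
$$
h(C)=\bone\oplus h^1(C)\oplus\LL,
$$
in which $h^0(C)=\bone$ is the unit, $h^2(C)=\LL$ is the Lefschetz motive, and $h^1(C)$ is the odd middle piece (the degree-one part of $h(\operatorname{Jac}(C))$), of rank $2g$.

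The key input is the identity $h(C_n)=\operatorname{Sym}^n h(C)$ in effective Chow motives, the symmetric power being formed in the tensor category of motives with its graded (super) commutativity constraint; concretely $h(C_n)=h(C^n)^{S_n}$ is the image of the symmetrizing idempotent, using $C_n=C^n/S_n$, the K\"unneth isomorphism $h(C^n)\cong h(C)^{\otimes n}$, and that we work with $\QQ$-coefficients in characteristic $0$. Since $\operatorname{Sym}^n(M\oplus N)=\bigoplus_{i+j=n}\operatorname{Sym}^i M\otimes\operatorname{Sym}^j N$ in any $\QQ$-linear pseudo-abelian tensor category, the generating series is multiplicative over the three summands:
$$
Z(C,t)=\Bigl(\sum_{n}\operatorname{Sym}^n\bone\,t^n\Bigr)\Bigl(\sum_{n}\operatorname{Sym}^n h^1(C)\,t^n\Bigr)\Bigl(\sum_{n}\operatorname{Sym}^n\LL\,t^n\Bigr).
$$

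Next I would evaluate the three factors. As $\bone$ and $\LL$ are even, $\operatorname{Sym}^n\bone=\bone$ and $\operatorname{Sym}^n\LL=\LL^n$, so these factors are the geometric series $\tfrac{1}{1-t}$ and $\tfrac{1}{1-\LL t}$. As $h^1(C)$ is odd, its symmetric powers are its exterior powers, $\operatorname{Sym}^n h^1(C)=\bigwedge^n h^1(C)$, which vanish for $n>2g$; hence the middle factor is the \emph{polynomial} $\sum_{n=0}^{2g}[\bigwedge^n h^1(C)]\,t^n$, which is exactly what the notation $(1+t)^{h^1(C)}$ abbreviates (the $\lambda$-structure convention $\lambda_t(M)=(1+t)^{[M]}$ for the generating series of exterior powers of the motive $h^1(C)$). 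Multiplying the three factors yields the claimed identity, understood as an equality of formal power series in $t$ whose coefficients live in $\widehat{K_0}(\Chow^{\eff})$, the completion only being needed to invert the two denominators.

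The main obstacle is precisely the identity $h(C_n)=\operatorname{Sym}^n h(C)$ together with the sign rule that converts $\operatorname{Sym}^n h^1(C)$ into $\bigwedge^n h^1(C)$: one must verify that under $h(C^n)\cong h(C)^{\otimes n}$ the $S_n$-action permutes the tensor factors with Koszul signs supplied by the odd summand $h^1(C)$, so that taking invariants produces ordinary symmetric powers on $\bone$ and $\LL$ but exterior powers on $h^1(C)$, and in particular a finite numerator. This is the content one borrows from the symmetric-power calculus for curves (del Ba\~no and Navarro Aznar, or the Kimura finite-dimensionality formalism), equivalently the motivic refinement of Macdonald's symmetric-product formula; an alternative, more geometric route in $\widehat{K_0}(\bVar)$ is to use the Abel--Jacobi map $C_n\to\operatorname{Pic}^n(C)\cong\operatorname{Jac}(C)$, which is a projective bundle in the stable range $n\ge 2g-1$, and the projective bundle formula for motives. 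Once the symmetric-power identity is granted, the remaining steps are purely formal.
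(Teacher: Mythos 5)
Your proof is correct, and it consumes the same key lemma as the paper's, but it runs the computation in the opposite direction, so the comparison is worth recording. You factor the zeta function directly: from $h(C_n)=\operatorname{Sym}^n h(C)$, the Chow--K\"unneth splitting $h(C)=\bone\oplus h^1(C)\oplus\LL$, and multiplicativity of symmetric powers over direct sums, you obtain $Z(C,t)=\frac{1}{1-t}\cdot\bigl(\sum_{n\geq 0}\lambda^n h^1(C)\,t^n\bigr)\cdot\frac{1}{1-\LL t}$. The paper instead starts from Kapranov's rationality theorem and identifies coefficients: using del Ba\~no's decomposition of $h(C_k)$ (Proposition \ref{prop:bano-ck}), which is precisely your $\operatorname{Sym}^k h(C)$ expanded term by term (the paper's $\lambda^n$ is by definition the image of your symmetrizing idempotent), it shows that the $t^k$-coefficient of $(1-t)(1-\LL t)Z(C,t)$ telescopes to $\lambda^k h^1(C)$. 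The genuine divergence is where the degree bound on the numerator comes from. You invoke Kimura finite-dimensionality, $\operatorname{Sym}^n h^1(C)=0$ for $n>2g$, as a black box; the paper instead proves that the coefficients vanish for $k\geq 2g+1$ by a telescoping computation with the projective-bundle formula $h(C_k)=h(J(C))\otimes(\bone\oplus\LL\oplus\cdots\oplus\LL^{\otimes k-g})$, valid for $k\geq 2g-1$ (Proposition \ref{prop:symmpro}), which combined with the coefficient identification actually reproves the Kimura vanishing for curves rather than assuming it. What your route buys: rationality of $Z(C,t)$ is an output rather than an input, and the shape of the formula (the factors $1-t$ and $1-\LL t$ matching $h^0$ and $h^2$, the numerator matching $h^1$) is structurally transparent. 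What the paper's route buys: self-containedness and no need to discuss sign conventions; on that point, your identification of $\operatorname{Sym}^n h^1(C)$ with $\bigwedge^n h^1(C)$ should be read only at the level of realizations, since the motivic statement you actually use is just the vanishing of $\lambda^n h^1(C)$ for $n>2g$. A last nitpick: $1-t$ and $1-\LL t$ are already invertible in $K_0(\Chow^{\eff})[[t]]$, so the power-series identity needs no completion; the completion $\widehat{K_0}(\Chow^{\eff})$ is what later permits the substitutions $t=\LL^{\pm i}$.
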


Then we will extract motivic terms corresponding to unstable vector bundles. We can compute these terms (cf. \cite{Bano;thesis, DR, EK, GPHS, HN}) in $\widehat{K_0}(\bVar),$ (or $\widehat{K_0}(\Chow^{\eff}),$ $K_0(\widehat{\DM}_{\gm})$) using Harder-Narasimhan filtrations. Then one can check that the inversion formula in \cite{Bano2} gives us the class of $M(r,L)$ in $\widehat{K_0}(\Chow^{\eff})$ (or $\widehat{K_0}(\bVar),$ $K_0(\widehat{\DM}_{\gm})$). Then we can compare our suggested motivic decomposition and the class of $M(r,L)$ computed using the above method and we can check that they are the same in $\widehat{K_0}(\Chow^{\eff})$ (or $\widehat{K_0}(\bVar),$ $K_0(\widehat{\DM}_{\gm})$). During our computation and comparison, we treat the terms corresponding to Jacobians and $C_k$ for $1 \leq k \leq g-1$ as if they are irreducible terms. This idea comes from the facts that derived categories of Jacobians and certain symmetric products of curves do not admit nontrivial semiorthogonal decomposition. See \cite{BGL} for precise statements and more details. To be more precise, we express motivic zeta function and motives of unstable bundles in terms of symmetric curves and Jacobians using the following motivic decompositions.

\begin{prop}\label{prop:symmpro-int}
(1) Let $g \leq k \leq 2g-2.$
Then we have the following isomorphism in $\Chow^{\eff}.$ 
\begin{equation}
   \label{eq:symmpro-int}
   h(C_k) \cong
   h(J(C)) \otimes (\bigoplus_{l=0}^{k-g} \LL^{ l})
   \oplus h(C_{2g-2-k}) \otimes \LL^{ k-g+1}  
\end{equation}
(2) Let $k \geq 2g-1.$ Then we have the following isomorphism in $\Chow^{\eff}.$
$$ h(C_k) = h(J(C)) \otimes (\bigoplus_{l=0}^{k-g} \LL^{ l}) $$
\end{prop}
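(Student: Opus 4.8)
The plan is to study both statements through the Abel--Jacobi morphism $a_k\colon C_k \to \Pic^k(C)\cong J(C)$, whose fibre over a line bundle $L$ of degree $k$ is the complete linear system $|L|=\PP(H^0(C,L))$, a projective space of dimension $h^0(L)-1$. By Riemann--Roch and Serre duality one has $h^0(L)=(k-g+1)+h^1(L)$ with $h^1(L)=h^0(K\otimes L^{-1})$, where $K$ is the canonical bundle of degree $2g-2$. Thus the fibre dimension is controlled entirely by the Brill--Noether function $L\mapsto h^0(K\otimes L^{-1})$, and the two regimes of the proposition correspond exactly to whether this function can be nonzero.

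For part (2), where $k\ge 2g-1$, every $L$ has $\deg L>2g-2$, so $h^1(L)=0$ identically and $a_k$ is a Zariski-locally trivial $\PP^{k-g}$-bundle, namely the projectivisation of the Picard bundle $\mathcal E:=(a_k)_*\mathcal P$, a vector bundle of rank $k-g+1$ by cohomology and base change ($\mathcal P$ a Poincar\'e line bundle on $C\times\Pic^k(C)$). First I would invoke the projective bundle formula for Chow motives, $h(\PP(\mathcal E))\cong\bigoplus_{l=0}^{\operatorname{rk}\mathcal E-1} h(J(C))\otimes\LL^{l}$, which gives the stated isomorphism immediately. This case is clean and requires no Brill--Noether input.

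For part (1), where $g\le k\le 2g-2$, the locus $W=\{[L]:h^1(L)>0\}\subset\Pic^k(C)$ is a proper closed subset over which the fibres of $a_k$ jump; away from $W$ the map is still a $\PP^{k-g}$-bundle, accounting for the first summand $h(J(C))\otimes\bigoplus_{l=0}^{k-g}\LL^{l}$, and the content of the proposition is that the correction coming from the jumping is the \emph{single} Tate-twisted motive $h(C_{2g-2-k})\otimes\LL^{k-g+1}$. The strategy I would follow is to represent the perfect complex $Ra_*\mathcal P$ on $J(C)$ by a two-term complex of vector bundles $[E_0\xrightarrow{\phi}E_1]$ and to realise $C_k$ as the associated degeneracy/incidence locus inside $\PP(E_0)$, the generic part being the $\PP^{k-g}$-bundle and the excess part sitting over $W$. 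The key geometric identification is that, via the isomorphism $W\cong\{[K\otimes L^{-1}]\}$ and the Serre-duality correspondence $\Gamma=\{(D,D')\in C_k\times C_{2g-2-k}: D+D'\in|K|\}$, the excess contribution is governed by the \emph{smooth} variety $C_{2g-2-k}$ rather than by the singular Brill--Noether locus $W$ itself. I would then use $\Gamma$ (normalised by the relative hyperplane class) to produce mutually orthogonal idempotent correspondences splitting $h(C_k)$ into the projective-bundle part and the twisted copy of $h(C_{2g-2-k})$, organising the verification by a descending induction on $k$ anchored by the projective-bundle range of part (2).

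The main obstacle is precisely this last step: upgrading the evident identity in the Grothendieck ring, obtained by stratifying $J(C)$ into $W$ and its complement and counting fibrewise cells, into a genuine direct-sum decomposition of \emph{effective Chow motives}. Because the Brill--Noether jumping is not a vector-bundle phenomenon, $C_k$ is neither a projective bundle nor a simple blow-up, and the splitting cannot be read off from a locally trivial fibration; the delicate point is to construct explicit projectors out of $\Gamma$, to check that they are idempotent and orthogonal, and to confirm that the residual summand is exactly $h(C_{2g-2-k})\otimes\LL^{k-g+1}$ with no further excess terms. Once these projectors are in place the proposition follows, and the same correspondences make the induction on $k$ formal.
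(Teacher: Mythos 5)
Your part (2) is fine and coincides with the paper's argument: for $k \geq 2g-1$ the Abel--Jacobi map makes $C_k$ a projective bundle over $J(C)$, and the projective bundle formula for Chow motives gives the result. The problem is part (1). Your proposal stops exactly where the mathematical content lies: you observe, correctly, that stratifying $\Pic^k(C)$ by Brill--Noether loci only yields an identity in the Grothendieck ring, and that to get an isomorphism in $\Chow^{\eff}$ one must produce idempotent, mutually orthogonal correspondences realizing the two summands --- and then you leave precisely this construction undone (``once these projectors are in place the proposition follows''). Nothing in the proposal explains how to build such projectors from the incidence correspondence $\Gamma$, how to verify idempotency and orthogonality (which would require computing compositions of correspondences over the singular, non-equidimensional Brill--Noether strata, where the fibre dimension of the Abel--Jacobi map jumps), or why the residual summand is exactly $h(C_{2g-2-k}) \otimes \LL^{k-g+1}$ with no excess terms. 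As written this is a statement of the difficulty, not a proof of the proposition.

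The paper's proof avoids this obstacle entirely by never constructing new projectors: it invokes del Ba\~no's decomposition (Proposition \ref{prop:bano-ck}) $h(C_k) \cong \bigoplus_{b+c\leq k} \lambda^b h^1(C) \otimes \LL^{c}$, which already splits the motive of every symmetric power into canonical summands furnished by the $\lambda$-structure, together with $h(J(C)) \cong \bigoplus_{a=0}^{2g} \lambda^a h^1(C)$. Once both sides of \eqref{eq:symmpro-int} are written as direct sums of terms $\lambda^b h^1(C)\otimes\LL^{c}$, the proof reduces to a finite combinatorial matching of the index pairs $(b,c)$, the only nontrivial input being the Poincar\'e duality isomorphism $\lambda^{g+\delta}h^1(C) \cong \lambda^{g-\delta}h^1(C)\otimes\LL^{\delta}$ (Proposition \ref{Duality of Jacobian}), which relocates the duplicated summands. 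In effect, the projectors you are missing are exactly what del Ba\~no's $\lambda$-operations supply; without importing that splitting (or an equivalent result), your geometric argument cannot be completed as stated.
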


The above motivic decomposition is inspired by a work of Toda (cf. \cite{Toda}) and proved using results of del Ba\~no (cf. \cite{Bano;thesis}). We can also obtain the same identity in $K_0(\bVar).$ Using the above decomposition, we can rewrite the motivic zeta function as follows.

\begin{prop}\label{DecZeta-int}
We have the following equality in $\widehat{K_0}(\Chow^{\eff}).$
$$ Z(C,\LL^{ i}) = \sum_{k=0}^{g-1} \chi(C_{k})  \mathbb{L}^{ ik} + \sum_{k'=0}^{g-2} \chi(C_{k'})  \LL^{ (2i+1)(g-1)-(i+1)k'} $$
$$ + \chi(J(C)) (\frac{\LL^{ ig}}{(1-\LL^{ i})(1-\LL^{ i+1})}) $$
\end{prop}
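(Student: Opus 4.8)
The plan is to work directly from the series definition of the motivic zeta function, namely $Z(C,t)=\sum_{k\ge 0}\chi(C_k)\,t^k$, and to substitute $t=\LL^i$, so that $Z(C,\LL^i)=\sum_{k\ge 0}\chi(C_k)\,\LL^{ik}$ in $\widehat{K_0}(\Chow^{\eff})$. First I would split this sum according to the three ranges dictated by Proposition~\ref{prop:symmpro-int}: the range $0\le k\le g-1$, where no decomposition is available, the intermediate range $g\le k\le 2g-2$ governed by part (1), and the large range $k\ge 2g-1$ governed by part (2). The first range contributes $\sum_{k=0}^{g-1}\chi(C_k)\,\LL^{ik}$ verbatim, which is precisely the first summand on the right-hand side of the claimed identity.

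For the remaining two ranges I would insert the decompositions of Proposition~\ref{prop:symmpro-int}. In both ranges every $\chi(C_k)$ with $k\ge g$ carries the Jacobian contribution $\chi(J(C))\sum_{l=0}^{k-g}\LL^l$, and in the intermediate range it carries in addition the term $\chi(C_{2g-2-k})\,\LL^{k-g+1}$. I would then collect all of the Jacobian contributions. Reindexing by $m=k-g$ and interchanging the two summations gives
\begin{equation*}
\chi(J(C))\,\LL^{ig}\sum_{m\ge 0}\Bigl(\sum_{l=0}^{m}\LL^l\Bigr)\LL^{im}
=\chi(J(C))\,\LL^{ig}\sum_{l\ge 0}\LL^l\sum_{m\ge l}\LL^{im},
\end{equation*}
and summing the two resulting geometric series yields $\chi(J(C))\,\dfrac{\LL^{ig}}{(1-\LL^i)(1-\LL^{i+1})}$, which is the last summand of the claim.

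Finally I would treat the extra $C_{2g-2-k}$ terms coming from the intermediate range $g\le k\le 2g-2$. Setting $k'=2g-2-k$, so that $k'$ runs from $0$ to $g-2$, the power of $\LL$ attached to $\chi(C_{k'})$ is $(k-g+1)+ik=(g-1-k')+i(2g-2-k')=(2i+1)(g-1)-(i+1)k'$, which produces exactly the middle summand $\sum_{k'=0}^{g-2}\chi(C_{k'})\,\LL^{(2i+1)(g-1)-(i+1)k'}$. Adding the three contributions gives the asserted formula.

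The step I expect to be the genuine obstacle is justifying the resummation: the interchange of the order of summation and the evaluation of the geometric series must be carried out in the completed ring $\widehat{K_0}(\Chow^{\eff})$, so I would need to verify that the relevant partial sums converge in its topology, that $\tfrac{1}{(1-\LL^i)(1-\LL^{i+1})}$ is a well-defined element, and that the reordered series has the same limit. Once convergence is secured, the remainder is purely a matter of reindexing together with the elementary identity $\sum_{m\ge 0}\bigl(\sum_{l=0}^m\LL^l\bigr)x^m=\bigl((1-x)(1-\LL x)\bigr)^{-1}$ specialized at $x=\LL^i$, so the only substantive point left is the bookkeeping of exponents carried out above.
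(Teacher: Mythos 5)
Your proposal is correct and follows essentially the same route as the paper's proof: split the series $\sum_{k\ge 0}\chi(C_k)\LL^{ik}$ into the ranges $0\le k\le g-1$, $g\le k\le 2g-2$, $k\ge 2g-1$, substitute the decomposition of $\chi(C_k)$ from Proposition~\ref{prop:symmpro-int}, reindex the middle range by $k'=2g-2-k$, and sum the Jacobian contributions as a geometric series (your interchange of summation order versus the paper's use of $\sum_{l=0}^{k-g}\LL^l=\frac{1-\LL^{k-g+1}}{1-\LL}$ is an immaterial variant yielding the same factor $\frac{\LL^{ig}}{(1-\LL^i)(1-\LL^{i+1})}$). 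Your closing remark on convergence in $\widehat{K_0}(\Chow^{\eff})$ is a point the paper leaves implicit, and it is routine here since the completion is $\LL$-adic and all series involved have terms lying in increasing powers of $(\LL)$.
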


By extracting motives of unstable bundles we obtain the desired result.

\bigskip

It is an interesting question whether the motive of $M(r,L)$ can be expressed as a direct sum of motives of other varieties. Being motivated by the above strategy, we have the following conjectures.

\begin{conj}
The motive of $M(r,L)$ can be express as a direct sum of motives of products of symmetric products and Jacobian of $C.$
\end{conj}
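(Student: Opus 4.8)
The plan is to upgrade the Grothendieck-ring computations behind Theorems \ref{rank2} and \ref{rank3} to genuine direct-sum decompositions of Chow (or Voevodsky) motives, organized as an induction on the rank $r$. Since $\gcd(r,d)=1$, semistability coincides with stability, and in this coprime case a Poincar\'e bundle exists, so the semistable locus $\Bun_{r,L}^{\mathrm{ss}}$ of the stack of fixed-determinant bundles is a trivial $\GG_m$-gerbe over the smooth projective variety $M(r,L)$; thus $h(\Bun_{r,L}^{\mathrm{ss}})\cong h(M(r,L))\otimes h(B\GG_m)$ with $h(B\GG_m)=\bigoplus_{i\ge 0}\LL^{i}=(1-\LL)^{-1}$. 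Working in $\widehat{K_0}(\Chow^{\eff})$ one therefore recovers $h(M(r,L))$ from $h(\Bun_{r,L}^{\mathrm{ss}})$ by dividing by the gerbe factor $h(B\GG_m)$, and it suffices to decompose the motive of the semistable stack.

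The first step is to run the Harder--Narasimhan recursion at the level of motives rather than classes, following the motivic stratification results of \cite{HL1, HL2}. The HN stratification of $\Bun_{r,L}$ is a perfect stratification whose strata are, up to Tate twist and an affine factor of extensions, tensor products of the motives of lower-rank semistable stacks $h(\Bun^{\mathrm{ss}}_{r_i,d_i})$ with $r_i<r$; the associated localization triangles split, exhibiting $h(\Bun_{r,L}^{\mathrm{ss}})$ as the summand of $h(\Bun_{r,L})$ complementary to the unstable strata. Inductively on $r$ we may assume each lower-rank summand is already a direct sum of Tate twists of motives $h(C_{k_1}\times\cdots\times C_{k_s})$ and $h(J(C))$. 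Since this class of motives is closed under tensor product and Tate twist, the whole of $h(\Bun_{r,L})$, and hence its open summand, is expressed in such terms; concretely the building blocks are governed by the motivic zeta function $Z(C,t)$ of \cite{BD, Bano;thesis}, itself assembled from $h(C)$, its symmetric powers, and $h(J(C))$.

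The second step is the reduction to symmetric powers of bounded index. The expression produced above involves $h(C_k)$ for arbitrarily large $k$; applying Proposition \ref{prop:symmpro-int} repeatedly rewrites every $h(C_k)$ with $k\ge g$ in terms of the $h(C_{k'})$ with $k'\le g-1$ and $h(J(C))$, tensored with Lefschetz powers, exactly as Proposition \ref{DecZeta-int} does for the zeta function. One then invokes the inversion formula of del Ba\~no \cite{Bano2} to isolate $h(\Bun_{r,L}^{\mathrm{ss}})$ and collect terms. For $r=2,3$ this bookkeeping is precisely the computation yielding Theorems \ref{rank2} and \ref{rank3}, where the subtracted unstable contributions, together with all Jacobian terms, cancel and leave a manifestly effective sum of products of symmetric powers $C_{k}$ with $k\le g-1$.

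The main obstacle is twofold. The combinatorial crux is to prove, for all $r$, that the del Ba\~no inversion combined with the substitutions of Proposition \ref{prop:symmpro-int} always produces an \emph{effective} answer, i.e.\ that the negative contributions of the unstable strata cancel against positive ones so that every multiplicity in the final direct sum is nonnegative; this is a statement purely about the combinatorics of HN types and the zeta function, but no closed form for general $r$ is yet available, and producing one (or an $r$-independent sign-cancellation mechanism) is the heart of the problem. The secondary, motivic, obstacle is to lift the resulting \emph{class} identity to an actual isomorphism of motives: since $h(J(C))$, $h(C_k)=\Symm^{k} h(C)$ and their tensor products are all finite-dimensional in Kimura's sense and of abelian type, the relevant $\Hom$ and $\Ext$ groups between their Tate twists are controlled and a class identity respecting the weight grading does lift to a direct-sum decomposition, but verifying the vanishing of the obstructing extensions must be carried out case by case. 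When one works with Voevodsky motives as in Theorem \ref{decompositionVoevodsky}, the split localization triangles of \cite{HL1, HL2} already yield the decomposition directly, bypassing this lifting issue and leaving the combinatorial effectivity as the genuine difficulty.
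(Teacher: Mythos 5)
This statement is a \emph{conjecture} in the paper: the authors offer no proof of it, and what they actually establish (Theorems \ref{rank2}, \ref{rank3}, \ref{decompositionVoevodsky}) are equalities of \emph{classes} in the completed rings $\widehat{K_0}(\Chow^{\eff})$, $\widehat{K_0}(\bVar)$ and $K_0(\widehat{\DM}_{\gm})$ for $r=2,3$ only. Your text is a reasonable research program in the spirit of the paper's method (HN recursion, the zeta-function bookkeeping of Propositions \ref{prop:symmpro-int} and \ref{DecZeta-int}, del Ba\~no's inversion \cite{Bano2}), but it is not a proof, and you in fact concede the two decisive steps. First, the effectivity problem: for general $r$ you have no mechanism forcing the multiplicities in the final expression to be nonnegative after the unstable strata are subtracted, and without that there is no candidate direct sum at all; this is open even as a statement about classes. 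Second, the lifting problem: everything up to that point lives in completions where $\LL$-adic limits, division by $h(B\GG_m)=(1-\LL)^{-1}$, and subtraction are legal, but these operations exit the category of effective geometric motives. Your appeal to Kimura finite-dimensionality (which, via Andr\'e--Kahn, does give cancellation and the implication $[M]=[N]\Rightarrow M\cong N$ for finite-dimensional motives) only applies once the identity has been rewritten as an equality between \emph{finite} direct sums of effective motives --- which is exactly the effectivity statement you have not proved. So the two ``obstacles'' are not technical loose ends; together they constitute the entire content of the conjecture.

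Two further steps are asserted with more confidence than the literature supports. You claim the HN localization triangles in $\DM$ split, exhibiting $h(\Bun^{\mathrm{ss}}_{r,L})$ as a direct summand of $h(\Bun_{r,L})$; but \cite{HL1, HL2} compute $h(\Bun_{r,d})$, and isolating the semistable summand motivically (rather than in $K_0$, where it is automatic) requires purity/weight arguments that are precisely the hard point, not a citation. Likewise, even granting $h(\Bun^{\mathrm{ss}}_{r,L})\cong h(M(r,L))\otimes h(B\GG_m)$ from triviality of the gerbe in the coprime case, a decomposition of the left-hand side does not formally yield one of $h(M(r,L))$: extracting the first tensor factor is another cancellation problem of the same nature as the one above, since $h(B\GG_m)=\bigoplus_{i\geq 0}\LL^{i}$ is an infinite sum and the quotient is again only meaningful after completion. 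In short: for $r=2,3$ your outline reproduces what the paper already proves at the level of classes, and for general $r$ it reduces the conjecture to two named open problems rather than solving them.
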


\bigskip

Based on the above motivic decomposition and Orlov's conjecture, we have the following conjectures.

\begin{conj}\label{SODrank3}
(1) Let $M(3,L)$ be the moduli space of rank $3$ stable vector bundles on $C$ whose determinants are isomorphic to a fixed line bundle $L$ of degree $1$ on $C.$ Then the derived category of $M(3,L)$ has the following semiorthogonal decomposition
$$ D(M(3,L)) = \langle \cdots, D(C_{k_1} \times C_{k_2}), D(C_{k_1} \times C_{k_2}), \cdots, D(C_{g-1} \times C_{g-1}) \rangle $$
where 
$ (k_1,k_2)$ is a pair of nonnegative integers satisfying $k_1+k_2 < 2(g-1)$ or $k_1+k_2=2(g-1), k_1<g-1.$ \\
(2) The class of $M(r,L)$ has a decomposition of the same type as above in the category of Chow motives and Voevodsky motives. \\
(3) The Karoubian completion of Fukaya category of $M(3,L)$ has a decomposition of the same type as above.
\end{conj}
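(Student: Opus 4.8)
The plan is to separate the statement into its motivic half (part (2)) and its categorical halves (parts (1) and (3)), since these require genuinely different techniques and sit at very different levels of accessibility. The motivic decomposition of part (2) is the tractable core: the Harder--Narasimhan stratification together with the inversion formula of del Ba\~no \cite{Bano2} already produces, via the computation behind Theorem~\ref{rank3}, an identity expressing the class of $M(3,L)$ as the stated sum in $\widehat{K_0}(\Chow^{\eff})$ and in $K_0(\widehat{\DM}_{\gm})$. My goal for (2) is to upgrade this equality of \emph{classes} to an actual isomorphism of motives, i.e.\ a direct sum $h(M(3,L)) \cong \bigoplus_{(k_1,k_2)} \bigl( h(C_{k_1}\times C_{k_2}) \otimes \LL^{k_1+2k_2} \oplus h(C_{k_1}\times C_{k_2}) \otimes \LL^{8g-8-2k_1-3k_2} \bigr) \oplus h(C_{g-1}\times C_{g-1})\otimes \LL^{3(g-1)}$ in $\Chow^{\eff}$ and $\DM_{\gm}$. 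Parts (1) and (3) are then approached by treating this motivic decomposition as the numerical shadow predicted respectively by Orlov's conjecture \cite{Orlov} and by homological mirror symmetry.

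For part (2) I would work in the idempotent-complete (Karoubian) categories $\Chow^{\eff}$ and $\DM_{\gm}$, where producing a direct-sum decomposition amounts to exhibiting mutually orthogonal projectors in $\End(h(M(3,L)))$ with the prescribed images. The natural source of such projectors is the Shatz/Harder--Narasimhan stratification of $\Bun_{3,d}$: following Hoskins--Lehalleur \cite{HL1, HL2}, the motive of the semistable locus of $\Bun_{3,d}$ (hence of $M(3,L)$, up to the standard Tate twist relating the two) fits into a motivic, rather than merely $K_0$-level, version of the recursion, and the contributions of the unstable strata --- indexed by pairs $(r_i,d_i)$ and computed, as in Proposition~\ref{prop:symmpro-int}, in terms of $h(C_k)$ and $h(J(C))$ --- appear as genuine summands. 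The crux is to show that the \emph{formal} cancellations in the inversion formula (which involve the denominators $(1-\LL^{i})(1-\LL^{i+1})$ and the completion) are realized by honest morphisms, so that the geometric-series contributions of the Jacobian factors cancel against one another as maps and not merely in $K_0$; once this is verified the residual decomposition is finite and effective, matching Theorem~\ref{rank3}. I expect this to succeed because every motive in sight lies in the thick tensor subcategory generated by $h(C)$, where Tate twists separate the graded pieces and the projectors can be built from the Abel--Jacobi correspondences underlying Proposition~\ref{prop:symmpro-int}.

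For part (1), the semiorthogonal decomposition, I would follow the Fourier--Mukai philosophy already visible in the rank-two case (cf.\ Conjecture~\ref{SODrank2}). For each admissible pair $(k_1,k_2)$ one wants a fully faithful functor $D(C_{k_1}\times C_{k_2}) \to D(M(3,L))$ whose kernel is assembled from the Poincar\'e bundle $\cE$ on $C \times M(3,L)$ and its tautological derivatives (exterior and symmetric powers, or the bundles obtained by restricting $\cE$ along the Abel--Jacobi maps $C_{k_1}\times C_{k_2}\to \Pic(C)^{2}$). The endpoint piece $D(C_{g-1}\times C_{g-1})$ and the first few pieces should be accessible from the full faithfulness of $\Phi_{\cE}$ established in \cite{BS} for $r\ge 3,\, d=1$ in large genus. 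The three things to establish are then full faithfulness of each functor, semiorthogonality between distinct pieces (which should mirror, on the level of motives, the separation recorded by the two Tate twists $\LL^{k_1+2k_2}$ and $\LL^{8g-8-2k_1-3k_2}$ of Theorem~\ref{rank3}), and generation.

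The principal obstacle is generation: showing that the proposed subcategories fill $D(M(3,L))$, with no hidden orthogonal complement. Matching the motivic decomposition of part (2) rules out extra pieces numerically, but agreement of an additive invariant does not by itself prove completeness of a semiorthogonal decomposition, so a separate argument --- for instance a resolution of the diagonal of $M(3,L)$ built from the tautological kernels, or a Lefschetz-type argument exploiting the Fano structure --- is needed, and this is where I expect the real difficulty to lie; it is the reason part (1) must remain conjectural rather than a theorem. Part (3), the decomposition of the Karoubian-completed Fukaya category, I would address last and only conditionally: granting a homological mirror symmetry equivalence, or a Lagrangian-correspondence functor from the Fukaya category of each $C_{k_1}\times C_{k_2}$ that intertwines the two sides, part (3) would follow formally from part (1). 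Absent such a bridge it is the least accessible of the three, and I would treat it as a consequence of (1) together with homological mirror symmetry rather than attempt it directly.
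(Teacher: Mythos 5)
The statement you were asked to prove is Conjecture~\ref{SODrank3}: the paper does not prove it, and offers no proof to compare against. What the paper actually establishes is strictly weaker evidence: Theorem~\ref{rank3chow} gives the stated identity for the motivic Poincar\'e polynomial in $\widehat{K_0}(\Chow^{\eff})$, and Theorem~\ref{decompositionVoevodsky} gives the identity of classes in $\widehat{K_0}(\bVar)$ and $K_0(\widehat{\DM}_{\gm})$; the lift to an actual direct-sum decomposition of the motive is kept separate, as its own conjecture (``the motive of $M(r,L)$ can be expressed as a direct sum of motives of products of symmetric products and Jacobian of $C$''). Your proposal correctly treats parts (1) and (3) as conjectural and correctly identifies generation as the real obstruction in (1), which is consistent with the paper's stance. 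But your claim that part (2) is the ``tractable core'' that you can settle contains a genuine gap, and it is exactly the gap the authors themselves declined to close.

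Concretely, the step ``upgrade this equality of classes to an actual isomorphism of motives'' by ``exhibiting mutually orthogonal projectors'' is not a step but the entire open problem. The cancellations you invoke in del Ba\~no's inversion formula happen only after completion: the unstable strata contribute infinite geometric series with denominators $(1-\LL^{i})(1-\LL^{i+1})$, which are elements of $\widehat{K_0}(\Chow^{\eff})$ or $K_0(\widehat{\DM}_{\gm})$, not (sums of) effective motives, so there are no ``honest morphisms'' along which these terms could cancel; and an equality in a Grothendieck group --- a fortiori in a completed one --- does not lift to a splitting, since $K_0$ of a triangulated category identifies extensions and triangles with sums and admits no cancellation theorem. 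Likewise, the Hoskins--Lehalleur formula you cite is for the motive of the whole stack $\Bun_{r,d}$; extracting the semistable stratum as a motivic \emph{summand} (rather than as a class) would require a motivic HN recursion with splittings, which is not available in the literature the paper relies on. So your program for (2) restates the conjecture rather than proving any part of it; as a research plan it is reasonable and aligned with the paper's own speculations (Orlov's principle for (1), homological mirror symmetry for (3)), but none of the three parts is established, nor does the paper claim otherwise.
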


Especially, we expect there will be a corresponding decompositions of quantum cohomology groups of moduli spaces. For example, there is a formula appears in Floer cohomology which is compatible with the motivic decomposition in rank 2 case (cf. \cite[Conjecture 24]{Munoz}). See Section 7.3 and \cite{Lee} for more precise statements and details.

\bigskip

The content of this paper is as follows. In section 2, we recall several definitions and known results about motives and moduli spaces of vector bundles on curves which we will use in this paper. In section 3, we discuss motivic decompositions of symmetric products of curves and motivic zeta functions. In section 4, we obtain decomposition of the motivic Poincar\'e polynomial of $M(2,L)$ hence provide new and simpler proof of a version of the main result of \cite{Lee}. In section 5, we discuss motivic Poincar\'e polynomial of $M(3,L)$ and provide a new decomposition of it. In section 6, we discuss the same decompositions in the dimensional completion of Grothendieck ring of varieties and Voevodsky motives and several conjectures and further directions.

\bigskip

\bigskip

\noindent\textbf{Notation}. Let us fix an algebraically closed field $\KK$ whose characteristic is 0. Algebraic varieties, schemes and stacks we will consider in this paper are defined over $\KK.$ We will use $C_k$ to denote the $k$-th symmetric power of $C$ where $C$ is a smooth projective curve. Let $\SmProjVar$ (resp. $\bVar,$ $\Sch,$ $\Sch_c$) denotes the category of smooth projective varieties (resp. varieties, schemes, schemes with proper morphisms) over $\KK.$ We will use $\QQ$ for the coefficients of Chow and Voevodsky motives in this paper. Sometimes, we use abuse of notation if we think what the notation means is clear from the context. For example, we use the same notation to denote classes of a variety in $K_0(\bVar)$ and $\widehat{K_0}(\bVar).$

\bigskip

\section{Preliminaries}

Let us recall some basic notions and facts which we will use in this paper.

\subsection{Grothendieck ring of varieties and stacks}

\begin{defi}
Grothendieck ring of varieties $K_0(\bVar)$ is a ring generated by the symbol $[X]$ where $X$ is the isomorphism class of a variety in $\bVar$ modulo relations of the following form
$$ [X]=[X \setminus Z]+[Z] $$
where $Z$ is a closed variety in $X.$ The multiplication law is given by the Cartesian product. We use the notation $\LL$ to denote the class of affine line $[\A^1] \in K_0(\bVar).$
\end{defi}

We want to discuss motives of stacks. In order to do it, we discuss dimensional filtration of Grothendieck ring of varieties.

\begin{defi}\cite[dimensional completion]{BD, GPHS}
\label{def:dimensional-completion}
Let $F^m(K_0(\bVar))$ denotes the abelian subgroup of $K_0(\bVar)[\frac{1}{\LL}]$ generated by $\frac{[X]}{\LL^n}$ where $X$ is a variety such that dimension of $X$ minus $n$ is less than or equal to $-m.$ Let $\widehat{K_0}(\bVar)$ be the completion of $K_0(\bVar)$ with respect to the filtration $F^m(K_0(\bVar)).$
\end{defi}

Using the dimensional filtration of Grothendieck ring of varieties, we can define motivic classes of some stacks as follows.

\begin{defi}\cite{BD, GPHS}
(1) Let $GL(m)$ be a linear group acting on a variety $Z$ and $\cZ=[Z/GL(m)]$ be the quotient stack. We define the class of $[\cZ]$ in $\widehat{K_0}(\bVar)$ to be
$$ [\cZ]=\frac{[Z]}{[GL(m)]}. $$
(2) Let $G$ be an affine group acting on a variety $Z$ and $\cZ=[Z/G]$ be the quotient stack. We define the class of $[\cZ]$ in $\widehat{K_0}(\bVar)$ to be
$$ [\cZ]=\frac{[Z \times _{G} GL(m)]}{[GL(m)]} $$
where $G \to GL(m)$ is a faithful representation. \\
(3) A stack $\cX$ with linear (or affine) stabilizers is essentially of finite type if it admits a countable stratification $\cX = \bigsqcup_{i=0}^{\infty} \cZ_i$ where $\cZ_i$ is of finite type, locally closed with $\dim ~ \cZ_i$ goes to $-\infty$ as $i$ goes to $\infty.$ \\
(4) Let $\cX = \bigsqcup_{i=0}^{\infty} \cZ_i$ be a stack of essentially of finite type and suppose that each $\cZ_i$ is the global quotient of a variety $Z_i$ by $G_i.$ Then we define the class $[\cX]$ in $\widehat{K_0}(\bVar)$ to be
$$ [\cX] = \sum_{i=0}^{\infty} [\cZ_i] $$
where $ [\cZ_i]=\frac{[Z_i]}{[G_i]}. $
\end{defi}

\begin{rema}
From the definition of $\widehat{K_0}(\bVar),$ one can check that the class $\frac{1}{[GL(m)]}$ is well-defined.
\end{rema}

\subsection{Chow and Voevodsky motives}

Grothendieck suggested the notion of Chow motives and Manin studied its basic properties and applications in \cite{Manin}. From now on we will use $\Chow$ (respectively $\Chow^{\eff}$) to denote the category of (respectively effective) Chow motives. There is a natural contravariant functor
$$ h : \SmProjVar \to \Chow^{\eff} $$
and see \cite{Bano;thesis, Bano1, Manin} for more details about Chow motives. \\

The theory of Chow motives works well in the category of smooth projective varieties. There have been many attempts to generalize the notion of Chow motives to wider classes of schemes and stacks. Among these attempts, Hanamura, Levine and Voevodsky extended the notion of Chow motives independently. It turns out that their constructions are equivalent under certain mild assumptions. In this paper, we will use Voevodsky's construction. We will use $\DM_{\gm}^{\eff} := \DM_{\gm}^{\eff}(\KK,\QQ)$ to denote the effective geometric motives defined by Voevodsky (cf. \cite{Voevodsky}). Voevodsky proved that there is a fully faithful functor from the category of effective Chow motives to the category of effective geometric motives.

\begin{theo}\cite{Voevodsky}
There is a fully faithful functor from $\Chow^{\eff}$ to $\DM_{\gm}^{\eff}.$
\end{theo}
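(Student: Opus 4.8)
The plan is to realize both categories as idempotent completions of additive categories generated by smooth projective varieties and to define the functor on these generators, deferring all substantive content to one computation of morphism groups. Recall that $\Chow^{\eff}$ is the pseudo-abelian envelope of the additive category whose objects are smooth projective varieties and whose morphisms are correspondences, $\Hom(h(X),h(Y))=\mathrm{CH}^{\dim Y}(X\times Y)_{\QQ}$, with composition the usual product of cycles; and that $\DM_{\gm}^{\eff}$ is by construction pseudo-abelian and receives a covariant functor $M$ through the chain $\SmProjVar \to \mathit{SmCor}(\KK) \to K^b(\mathit{SmCor}(\KK)) \to \DM_{\gm}^{\eff}$. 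First I would set $h(X)\mapsto M(X)$ on objects and send a correspondence class to the map of motives it induces (a cycle on $X\times Y$, taken modulo rational equivalence, is in particular a finite correspondence acting on motives); additivity then extends the assignment uniquely to direct summands and to the Tate twists $\LL^{\otimes n}$, yielding a functor $\Chow^{\eff}\to\DM_{\gm}^{\eff}$.

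The crux is full faithfulness, which, since both categories are generated under sums, summands and twists by motives of smooth projective varieties, reduces to the single isomorphism
\[
\Hom_{\DM_{\gm}^{\eff}}(M(X),M(Y)) \;\cong\; \mathrm{CH}^{\dim Y}(X\times Y)_{\QQ} \;=\; \Hom_{\Chow^{\eff}}(h(X),h(Y)),
\]
compatibly with composition, for all smooth projective $X,Y$. To compute the left side I would pass to the larger rigid category $\DM_{\gm}$ and use duality: for $Y$ smooth projective of dimension $d$ the motive $M(Y)$ is dualizable with $M(Y)^{\vee}\cong M(Y)(-d)[-2d]$, so that
\[
\Hom(M(X),M(Y)) \cong \Hom\bl M(X)\otimes M(Y)^{\vee},\QQ\br \cong \Hom\bl M(X\times Y),\QQ(d)[2d]\br,
\]
which is the motivic cohomology group $H^{2d,d}(X\times Y,\QQ)$.

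The main obstacle is supplying the deep theorems of Voevodsky that license this bookkeeping. Rigidity of $\DM_{\gm}$ (hence the duality $M(Y)^{\vee}\cong M(Y)(-d)[-2d]$) is available because $\KK$ has characteristic zero and so admits resolution of singularities; the Cancellation Theorem, that $-\otimes\QQ(1)$ is fully faithful on $\DM_{\gm}^{\eff}$, is what guarantees that $\DM_{\gm}^{\eff}\hookrightarrow\DM_{\gm}$ is fully faithful, so the computation carried out in $\DM_{\gm}$ returns the intended effective morphism groups. The final identification $H^{2d,d}(X\times Y,\QQ)\cong\mathrm{CH}^{d}(X\times Y)_{\QQ}=\mathrm{CH}^{\dim Y}(X\times Y)_{\QQ}$ is Voevodsky's comparison of motivic cohomology with Bloch's higher Chow groups. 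With these in place, a routine check that the cycle-product composition of Chow correspondences matches composition of the induced maps of motives completes full faithfulness on generators, and passage to idempotent completions finishes the proof. I expect essentially all of the difficulty to reside in these cited inputs rather than in the formal reduction.
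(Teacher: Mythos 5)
The paper does not actually prove this statement: it is imported verbatim, with a citation, from Voevodsky's work, so there is no internal argument to compare yours against. Your sketch is, in essence, the standard proof behind that citation --- reduce full faithfulness to the computation $\Hom_{\DM_{\gm}}(M(X),M(Y))\cong \mathrm{CH}^{\dim Y}(X\times Y)_{\QQ}$ for smooth projective $X,Y$, obtained from rigidity/duality ($M(Y)^\vee\cong M(Y)(-d)[-2d]$), the cancellation theorem (so that computing in $\DM_{\gm}$ returns effective Hom-groups), and the identification of $H^{2d,d}$ with Chow groups --- and you have identified exactly the right deep inputs, all of which are available here because $\KK$ has characteristic zero.

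One step, however, is wrong as literally stated: the parenthetical claim that ``a cycle on $X\times Y$, taken modulo rational equivalence, is in particular a finite correspondence.'' The inclusion goes the other way. A finite correspondence is an honest cycle each of whose components is finite and surjective over $X$; a general cycle of codimension $\dim Y$ on $X\times Y$ (for instance a vertical component $\{p\}\times Y$) satisfies no such condition, and a class modulo rational equivalence is not a cycle at all. So the functor is not yet defined by that sentence: to define it through finite correspondences one needs both a moving statement (every class in $\mathrm{CH}^{\dim Y}(X\times Y)_{\QQ}$ is represented by a difference of finite correspondences) and a well-definedness argument (rationally equivalent finite correspondences induce equal maps in $\DM_{\gm}^{\eff}$, which rests on $\A^1$-homotopy invariance and is not formal). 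The standard repair --- implicit in your own argument --- is to run the duality computation first and then \emph{define} the functor on Hom-groups via the resulting isomorphism $\mathrm{CH}^{\dim Y}(X\times Y)_{\QQ}\cong\Hom_{\DM_{\gm}^{\eff}}(M(X),M(Y))$, checking compatibility with composition of correspondences. With that reordering, your outline is the correct architecture of Voevodsky's proof, and the difficulty indeed resides entirely in the cited theorems.
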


Indeed, he proved that there is a functor $ \Sch \to \DM_{\gm}^{\eff} $ which extends the previous functor $h$ defined on the category of smooth projective varieties. We will use the same $h$ to denote this functor. Moreover there is another functor
$$ h_c : \Sch_c \to \DM_{\gm}^{\eff} $$
which agrees with $h$ when we restrict it to the proper schemes.

It turns out that the above functors enjoy many nice properties. For example, we have a version of Poincar\'e duality. Let $M$ be an element in $\DM_{\gm}^{\eff}.$ Then the dual of $M$ is defined as follows.
$$ M^\vee:=Hom(M,\QQ(0)) $$

Let $X$ be a smooth scheme of dimension $d.$ Then we have the following isomorphism.
$$ h_c(X) = h(X)^\vee \otimes \LL^{ d} $$

Because there is a functor from $\Chow^{\eff}$ to $\DM_{\gm}^{\eff}$ it is natural to compare their Grothendieck groups. Bondarko proved that they are the same in \cite{Bondarko}.

\begin{theo}\cite{Bondarko} The Grothendieck group of effective Chow motives is isomorphic to the Grothendieck group of effective geometric motives, i.e.
$$ K_0(\Chow^{\eff}) = K_0(\DM_{\gm}^{\eff}). $$
\end{theo}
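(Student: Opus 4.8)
The plan is to deduce this from Bondarko's theory of weight structures, which is the natural framework for comparing the Grothendieck group of a triangulated category with that of an additive heart. The fully faithful functor $h\colon \Chow^{\eff} \to \DM_{\gm}^{\eff}$ recorded above (after Voevodsky) is additive, and since any direct sum $A \oplus B$ fits into a split distinguished triangle, the additive relation $[A \oplus B] = [A] + [B]$ in $K_0(\Chow^{\eff})$ maps to a triangulated relation in $K_0(\DM_{\gm}^{\eff})$. Hence $h$ induces a well-defined homomorphism $\phi\colon K_0(\Chow^{\eff}) \to K_0(\DM_{\gm}^{\eff})$, and the entire content is to show $\phi$ is an isomorphism.

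First I would establish the existence of a bounded weight structure $w$ on $\DM_{\gm}^{\eff}$ whose heart is equivalent, via $h$, to $\Chow^{\eff}$. This needs two inputs. \textbf{Generation:} the motives $h(X)$ of smooth projective varieties generate $\DM_{\gm}^{\eff}$ as a thick (idempotent-complete) triangulated subcategory; over a field of characteristic $0$ this follows from Hironaka's resolution of singularities via the Gysin and blow-up distinguished triangles, so the hypothesis $\mathrm{char}\,\KK = 0$ enters here. \textbf{Orthogonality:} $\Hom_{\DM_{\gm}^{\eff}}(M, N[i]) = 0$ for all effective Chow motives $M, N$ and all $i > 0$. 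Concretely, after the duality $h(Y)^\vee = h(Y)(-d_Y)[-2d_Y]$ the group $\Hom_{\DM_{\gm}^{\eff}}(h(X), h(Y)[i])$ is identified with a higher Chow group $CH^{d_Y}(X \times Y, -i)$, which vanishes for $-i < 0$; this rests on Voevodsky's cancellation theorem and the comparison of morphisms in $\DM$ with motivic cohomology. From these two facts one builds $\mathcal{T}_{w\le 0}$ and $\mathcal{T}_{w\ge 0}$ as the appropriate extension-closures of shifts of Chow motives and verifies the weight-decomposition axiom, producing the Chow weight structure with heart $\Chow^{\eff}$.

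Next I would invoke the abstract principle that for a triangulated category with a bounded weight structure of heart $\mathcal{H}w$, the inclusion of the heart induces an isomorphism $K_0(\mathcal{H}w) \xrightarrow{\ \sim\ } K_0(\mathcal{T})$. Surjectivity of $\phi$ is the easy half: every $M \in \DM_{\gm}^{\eff}$ carries a finite weight Postnikov tower whose subquotients are (shifts of) Chow motives, so $[M]$ is an alternating sum of heart classes and lies in the image of $\phi$. For injectivity I would build the inverse from Bondarko's weight complex functor $t\colon \DM_{\gm}^{\eff} \to K^b(\Chow^{\eff})$; although $t$ is only a weak functor, it sends distinguished triangles to data forcing $[B]=[A]+[C]$, so $M \mapsto \sum_i (-1)^i [C_i]$ for $t(M) = (C_\bullet)$ descends to a homomorphism $\psi\colon K_0(\DM_{\gm}^{\eff}) \to K_0(\Chow^{\eff})$. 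Verifying $\psi \circ \phi = \id$ and $\phi \circ \psi = \id$ on generators is then formal, and combined with the full faithfulness of $h$ this gives $K_0(\Chow^{\eff}) = K_0(\DM_{\gm}^{\eff})$.

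The genuinely deep input, and the step I expect to be the main obstacle, is the orthogonality: the vanishing of $\Hom_{\DM_{\gm}^{\eff}}(M, N[i])$ for $i > 0$ between effective Chow motives. This is precisely where Voevodsky's foundational computations — cancellation, the identification of morphism groups with motivic (higher Chow) cohomology, and the concentration of correspondences in weight zero — are unavoidable. Everything else, namely the formal construction of the weight structure from generation and orthogonality, the weight complex functor, and the abstract $K_0$-comparison, is category-theoretic bookkeeping once this vanishing is in hand.
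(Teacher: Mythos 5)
This statement appears in the paper purely as a citation of Bondarko's theorem, with no proof given in the paper itself, so the only proof to compare against is that of the cited reference; your outline is precisely the strategy of that reference — the bounded Chow weight structure on $\DM_{\gm}^{\eff}$ with heart $\Chow^{\eff}$, generation of $\DM_{\gm}^{\eff}$ by motives of smooth projective varieties via resolution of singularities in characteristic $0$, orthogonality $\Hom_{\DM_{\gm}^{\eff}}(M,N[i])=0$ for $i>0$ via the identification with higher Chow groups and their vanishing in negative degrees, and the weight complex functor yielding $K_0(\Chow^{\eff})\cong K_0(\DM_{\gm}^{\eff})$. The proposal is correct and takes essentially the same route as the result the paper invokes, so there is nothing to flag.
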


Moreover, there is a morphism from $K_0(\bVar)$ to $K_0(\Chow^{\eff})$ (hence also to $K_0(\DM_{\gm}^{\eff})$) when the characteristic of the base field is zero.

\begin{theo}\cite{MNP}
Then there is a unique morphism 
$$ \chi_c : K_0(\bVar) \to K_0(\Chow^{\eff}) $$
such that $\chi_c([X])=h(X)$ where $X$ is a smooth projective variety.
\end{theo}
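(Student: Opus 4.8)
The plan is to construct $\chi_c$ using Bittner's presentation of the Grothendieck ring of varieties, which is available precisely because $\KK$ has characteristic zero, so that resolution of singularities and the weak factorization theorem apply. Bittner's theorem identifies $K_0(\bVar)$, as a ring, with the group generated by isomorphism classes $[X]$ of \emph{smooth projective} varieties, subject to $[\emptyset]=0$ and the blow-up relation
$$ [\mathrm{Bl}_Y X] - [E] = [X] - [Y], $$
where $X$ is smooth projective, $Y\subset X$ is a smooth closed subvariety, $\mathrm{Bl}_Y X$ is the blow-up of $X$ along $Y$, and $E$ is the exceptional divisor; multiplication is induced by $[X]\cdot[X']=[X\times X']$. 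Given this presentation, it suffices to define $\chi_c$ on smooth projective varieties and verify that these two families of relations are respected.

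First I would put $\chi_c([X]):=[h(X)]$ for $X$ smooth projective, which is exactly the normalization demanded by the statement; the relation $[\emptyset]=0$ is then immediate. The essential point is the blow-up relation in $K_0(\Chow^{\eff})$, and here I would appeal to Manin's two classical computations for Chow motives. Since the exceptional divisor $E=\PP(N_{Y/X})$ is the projectivization of the rank-$c$ normal bundle, with $c=\mathrm{codim}_X Y$, the projective bundle formula yields
$$ h(E) \cong \bigoplus_{i=0}^{c-1} h(Y)\otimes\LL^{i}, $$
while Manin's blow-up formula yields
$$ h(\mathrm{Bl}_Y X) \cong h(X) \oplus \bigoplus_{i=1}^{c-1} h(Y)\otimes\LL^{i}. $$
Passing to classes in $K_0(\Chow^{\eff})$ and subtracting, the twisted copies of $h(Y)$ cancel and what remains is $[h(X)]-[h(Y)]$, which is exactly $\chi_c([X]-[Y])$. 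Hence $\chi_c$ is compatible with every defining relation and descends to a well-defined homomorphism of abelian groups.

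For the ring structure I would invoke the monoidal property $h(X\times X')\cong h(X)\otimes h(X')$ of the motive functor on smooth projective varieties; since products generate the multiplication in Bittner's presentation, this makes $\chi_c$ a ring homomorphism. Uniqueness is then automatic: Bittner's theorem shows that the classes of smooth projective varieties generate $K_0(\bVar)$, and on such classes the value of $\chi_c$ is prescribed by the normalization, so any morphism with the stated property must agree with the one just constructed.

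I expect the principal obstacle to lie in the bookkeeping of the blow-up relation — matching the index ranges in the projective bundle and blow-up formulas so that the cancellation produces precisely $[h(X)]-[h(Y)]$, and confirming that an isomorphism of effective Chow motives indeed gives an equality of classes in $K_0(\Chow^{\eff})$ (which holds by definition of the Grothendieck group). The remaining ingredients — the existence of Bittner's presentation and the monoidal structure of $h$ — are formal consequences of working in characteristic zero.
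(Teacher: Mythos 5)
The paper gives no proof of this statement at all---it is quoted verbatim from \cite{MNP}---so there is no internal argument to compare against. Your proposal is correct and reproduces what is essentially the standard proof found in that reference: in characteristic zero one invokes Bittner's presentation of $K_0(\bVar)$ by smooth projective generators and blow-up relations, sets $\chi_c([X])=[h(X)]$, checks the blow-up relation via the cancellation between Manin's blow-up formula $h(\mathrm{Bl}_Y X)\cong h(X)\oplus\bigoplus_{i=1}^{c-1}h(Y)\otimes\LL^{i}$ and the projective bundle formula $h(E)\cong\bigoplus_{i=0}^{c-1}h(Y)\otimes\LL^{i}$, and obtains uniqueness because the smooth projective classes generate $K_0(\bVar)$.
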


\subsection{Motives of moduli stacks of vector bundles on curves}

Kapranov introduced the notion of motivic zeta function in \cite{Kapranov}.

\begin{defi} Let $C$ be a smooth projective curve. Let $C_j$ be the $j$-th symmetric product. Then we define the motivic zeta function as follows. 
$$ Z(C,t):= \sum_{j=0}^{\infty} h(C_j)  t^{ j} $$
\end{defi}

Especially, we have the following identity.

$$ Z(C,\LL^{ i})= \sum_{j=0}^{\infty} h(C_j)  \LL^{ ij} $$ 

Kapranov showed that the motivic zeta function of a curve is a rational function (cf. \cite{Kapranov}).

\begin{theo}\cite[Theorem 1.1.9]{Kapranov} Let $C$ be a curve of genus $g.$ Then $Z(C,t)$ has the following properties. \\
(1) [Rationality] $Z(C,t)$ is a rational function. To be more precise, $(1-t) (1-\LL t) Z(C,t) $ is a polynomial of degree $2g.$ \\
(2) [Functional equation] $$ Z(C,t) = \LL^{ g-1}t^{ 2g-2}Z(C,\LL^{-1}t^{-1}) $$ 
\end{theo}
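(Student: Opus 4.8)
The plan is to reduce the entire statement to the Chow--K\"unneth decomposition of the curve together with the fact that the motive of a symmetric product is a symmetric power. Write $h(C) = \bone \oplus h^1(C) \oplus \LL$ for the canonical decomposition of the motive of the curve, where $\bone = h^0(C)$ and $\LL = h^2(C)$. With $\QQ$-coefficients one has $h(C_n) \cong \mathrm{Sym}^n h(C)$, since $C_n = C^n/S_n$ and symmetric quotients compute symmetric powers of motives. Consequently
$$ Z(C,t) = \sum_{n=0}^{\infty} \mathrm{Sym}^n h(C)\, t^n = \mathrm{Sym}_t\!\big(\bone\big) \cdot \mathrm{Sym}_t\!\big(h^1(C)\big) \cdot \mathrm{Sym}_t\!\big(\LL\big), $$
using that the total symmetric power $\mathrm{Sym}_t(M) := \sum_n \mathrm{Sym}^n(M)\, t^n$ is multiplicative across direct sums. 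This factorization is the engine for both parts.

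For the rationality I would evaluate the three factors. The summands $\bone$ and $\LL$ are ``even'', so $\mathrm{Sym}_t(\bone) = \sum_n \bone\, t^n = (1-t)^{-1}$ and $\mathrm{Sym}_t(\LL) = \sum_n \LL^n t^n = (1-\LL t)^{-1}$. The summand $h^1(C)$ is ``odd'', so under the sign rule of the tensor category of motives its symmetric powers are exterior powers, $\mathrm{Sym}^n h^1(C) \cong \bigwedge^n h^1(C)$, and these vanish for $n > 2g$ because $h^1(C)$ has rank $2g$. Hence $\mathrm{Sym}_t(h^1(C)) = \sum_{n=0}^{2g} \bigwedge^n h^1(C)\, t^n =: P(t)$ is a polynomial of degree $2g$ (its top coefficient $\bigwedge^{2g} h^1(C) \cong \LL^g$ is nonzero), and $Z(C,t) = P(t)/\big((1-t)(1-\LL t)\big)$, which is exactly the asserted rationality with $(1-t)(1-\LL t) Z(C,t) = P(t)$ of degree $2g$. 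As an independent check, and as the route suggested by Proposition \ref{prop:symmpro-int}(2), one can instead split the series at $n = 2g-1$ and use that $C_n \to J(C)$ is a $\PP^{\,n-g}$-bundle for $n \geq 2g-1$, so that $h(C_n) = h(J(C)) \otimes \bigoplus_{l=0}^{n-g}\LL^l$; summing the two resulting geometric series over the common denominator $(1-t)(1-\LL t)$ yields a polynomial, and a short degree count again gives $2g$.

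For the functional equation I would first observe that $Z(C,t) = \LL^{g-1} t^{2g-2} Z(C, \LL^{-1} t^{-1})$ is equivalent, after clearing the denominators $(1-t)(1-\LL t)$, to the palindromic symmetry $P(t) = \LL^{g} t^{2g} P(\LL^{-1} t^{-1})$ of the numerator; at the level of coefficients $a_n := \bigwedge^n h^1(C)$ this reads $a_{2g-n} \cong \LL^{g-n} \otimes a_n$. This symmetry is precisely motivic Poincar\'e duality on $h^1(C)$: from the self-duality $h^1(C)^\vee \cong h^1(C) \otimes \LL^{-1}$ and the determinant identity $\bigwedge^{2g} h^1(C) \cong \LL^g$ one obtains
$$ \bigwedge^{\,2g-n} h^1(C) \cong \Big(\bigwedge^{n} h^1(C)\Big)^\vee \otimes \bigwedge^{2g} h^1(C) \cong \bigwedge^{n} h^1(C) \otimes \LL^{g-n}, $$
which is the required relation $a_{2g-n} \cong \LL^{g-n} a_n$.

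The main obstacle, and the point requiring the most care, is the foundational input in $\Chow^{\eff}$ with $\QQ$-coefficients: the identification $h(C_n) \cong \mathrm{Sym}^n h(C)$, the sign rule that turns $\mathrm{Sym}^n$ of the odd piece $h^1(C)$ into $\bigwedge^n h^1(C)$, the vanishing $\bigwedge^n h^1(C) = 0$ for $n > 2g$, and the self-duality $h^1(C)^\vee \cong h^1(C) \otimes \LL^{-1}$ with $\bigwedge^{2g} h^1(C) \cong \LL^g$. These are exactly the structural facts about the motive of a curve established by del Ba\~no; once they are in hand, the generating-function manipulations and the duality bookkeeping are routine, so I would expect to spend most of the effort aligning these results rather than on the formal computation.
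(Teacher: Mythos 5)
Your proposal is correct in substance, but note first that the paper itself never proves this theorem: it is quoted verbatim from \cite{Kapranov}, so the only comparisons available are with Kapranov's original argument and with the paper's surrounding machinery. Kapranov's proof is geometric, in exactly the spirit of the paper's proof of Proposition \ref{prop:symmpro in Grothendieck ring}: fiber $C_n$ over $\mathrm{Pic}^n(C)$ by the Abel--Jacobi map, use that this is a projective bundle for $n\geq 2g-1$ to sum the tail of the series (rationality), and get the functional equation from Serre duality $D\mapsto K_C-D$ together with the Brill--Noether stratification; that argument works integrally, directly in $K_0(\bVar)$. Your route instead runs through the tensor structure of $\Chow^{\eff}$ with $\QQ$-coefficients: Chow--K\"unneth for $C$, the identification $h(C_n)\cong\lambda^n h(C)$, multiplicativity of total symmetric powers, a vanishing statement, and duality. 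This is a genuinely different, viable proof, and it has the pleasant feature of establishing Proposition \ref{motiviczeta} simultaneously --- note that the paper deduces Proposition \ref{motiviczeta} \emph{from} Kapranov's theorem, so your argument reverses that logical order, legitimately, since none of your inputs depend on Kapranov. Two steps do need sharper justification than you give. First, the commutativity constraint of $\Chow^{\eff}$ carries no signs, so ``$\mathrm{Sym}^n$ of the odd piece is $\bigwedge^n$'' is realization-level language: the object you are actually manipulating is the symmetrizer image $\lambda^n h^1(C)$ of Section 3.1 of the paper, and the vanishing $\lambda^n h^1(C)=0$ for $n>2g$ is \emph{not} a formal consequence of $h^1(C)$ ``having rank $2g$''; it is a theorem on motives of abelian varieties (K\"unnemann/Kimura, via $h^1(C)\cong h^1(J(C))$), or, at the level of $K_0(\Chow^{\eff})$ --- which is all the statement requires --- it can be extracted from Proposition \ref{prop:bano-ck} together with the projective-bundle formula, which is precisely the computation carried out in the paper's proof of Proposition \ref{motiviczeta}. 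Second, your palindromic symmetry $\lambda^{2g-n}h^1(C)\cong\lambda^n h^1(C)\otimes\LL^{g-n}$ is literally Proposition \ref{Duality of Jacobian} of the paper, proved there via Poincar\'e duality for $J(C)$ and the canonical decomposition of abelian-variety motives; invoking that result is safer than your determinant/perfect-pairing derivation, which for symmetrizer images again secretly needs the finite-dimensionality formalism. Granting those two inputs, your reduction of the functional equation to $a_{2g-n}\cong a_n\otimes\LL^{g-n}$ and the algebra clearing the denominators $(1-t)(1-\LL t)$ are correct. The trade-off between the two approaches: Kapranov's is elementary and integral, while yours requires $\QQ$-coefficients and heavier structural input but treats rationality and the functional equation uniformly and identifies the numerator as $(1+t)^{h^1(C)}$ on the nose.
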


We will discuss more about the motivic zeta functions of curves in Section 3.

\subsection{Moduli spaces of vector bundles on curves}

In \cite{HN}, Harder and Narasimhan introduced the notion of the Harder-Narasimhan filtration in order to count points of moduli space of stable vector bundles on $C$ defined over finite fields. Let us recall the notion.

\begin{defi}
Let $E$ be a vector bundle on $C.$ Then there is a unique filtration $0=E_0 \subset \cdots \subset E_n=E$ of $E$ satisfying the following properties. \\
(1) For each $i,$ $E_i/E_{i+1}$ is semistable bundle on $C.$ \\
(2) $\mu(E_1/E_0) > \cdots > \mu(E_i/E_{i+1}) \cdots > \mu(E_n/E_{n-1}). $ \\
Here $\mu$ denotes the slope stability function. We call the above unique filtration the Harder-Narasimhan filtration of $E.$
\end{defi}

In order to get an algebro-geometric way to compute cohomology groups of moduli stacks, Bifet, Ghione and Letizia studied matrix divisors in \cite{BGhL}. Let us recall their construction.

\begin{defi}
A matrix divisor of rank $r$ over $C$ is a rank $r$ vector bundle $E$ with an injective $\mathcal{O}_C$-modules
$$ \iota : E \to \mathcal{K}_C^{\oplus r} $$
where $\mathcal{K}_C$ is the constant sheaf of rational functions on $C.$
\end{defi}

Let $\iota : E \to \mathcal{K}_C^{\oplus r}$ be a matrix divisor. Then we can find an effective divisor $D$ such that $\iota$ factors via $\cO(D)^{\oplus r} \subset \mathcal{K}_C^{\oplus r}.$ For given $d$ and effective divisor $D,$ the matrix divisors of degree $d$ lying in $\mathcal{O}(D)$ are the $\mathbb{K}$-points of the Quot scheme $Q_{r,d}(D)$ parametrizing degree $r \mathrm{deg}(D)-d$ torsion sheaves which are quotients of $\mathcal{O}(D)^{\oplus r}.$

For two effective divisors $D \subset D'$ on $C$ we have a closed immersion $Q_{r,d}(D) \subset Q_{r,d}(D').$ Therefore $Q_{r,d}(D)$ defines a ind-variety and we call it the ind-variety of matrix divisors of rank $r$ and degree $d.$ From the Harder-Narasimhan filtration we have the Shatz stratification of $Q_{r,d}$ as follows
$$ Q_{r,d} = \bigcup_{(r,d)} Q_{(r,d)}^{ss} $$
where $(r,d)=(\cdots, r_i, d_i ,\cdots)$ is the Shatz polygon. See \cite{Bano;thesis, BGhL} for more details.

\subsection{Motivic Poincar\'e polynomial}

We will define motivic Poincar\'e polynomial of a motive $M$ following \cite{Bano;thesis, Bano1}. Let $K_0(\Chow^{\eff})$ be the Grothendieck ring of effective Chow motives over $\KK.$ Then we use $\widehat{K_0}(\Chow^{\eff})$ to denote the completion of $K_0(\Chow^{\eff})$ along $(\LL)$ the ideal generated by the Lefschetz motive. 

\begin{defi}\cite[Definition 6.14]{Bano;thesis}
We say that the motivic Poincar\'e polynomial of an ind-variety $(X_{i})_{i \in I}$ stabilizes if for each $m \in \NN$ there exists $i_m$ such that, for all $i>i_m$, $\chi(X_i)-\chi(X_{i_m})$ belongs to $\LL^{ m}.$ In this case we will use $\chi(X)$ to denote the element of $\widehat{K_0}(\Chow^{\eff})$ to which $\chi(X_{i})_{i \in I}$ is converging and call it the motivic Poincar\'e polynomial of $X.$
\end{defi}

Using matrix divisors and work in \cite{BGhL}, del Ba\~no obtained the following results. 

\begin{theo}\cite{Bano;thesis} \label{Bano;ind-variety}
(1) The motivic Poincar\'e polynomial of the ind-variety $Q_{r,d}$ converges to
$$ \frac{(1+1)^{h^1(C)}}{(1-\LL^{ n})} \prod_{i=1}^{r-1} \frac{(1+\LL^{ i})^{h^1(C)}}{(1-\LL^{ i})^{ 2}}. $$
(2) For each Shatz polygon $(r,d),$ the motivic Poincar\'e polynomial of $Q_{(r,d)}$ converges and $$ \chi (Q_{(r,d)}^{ss}) = \prod_i \chi (Q_{r_i,d_i}^{ss}). $$
(3) If $r$ and $d$ are coprime we have
$$ \chi (Q_{r,d}^{ss}) = \frac{1}{1-\LL} \chi (M(r,d)). $$
\end{theo}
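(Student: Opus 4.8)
The three statements all flow from the Bifet--Ghione--Letizia description of $Q_{r,d}$ together with the motivic zeta function, so the plan is to compute each motivic Poincar\'e polynomial as the $(\LL)$-adic limit in $\widehat{K_0}(\Chow^{\eff})$ of the classes of the finite-level Quot schemes $Q_{r,d}(D)$, and to read off convergence from the codimensions of the loci that are added as $\deg D$ grows.

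For (1) I would first note that $Q_{r,d}(D)$ is the Quot scheme parametrizing torsion quotients of $\mathcal{O}(D)^{\oplus r}$ of length $N_D = r\deg D - d$, and that, since a torsion sheaf is supported at finitely many points where $\mathcal{O}(D)^{\oplus r}$ is trivial, its class depends only on $N_D$ and not on the twist. The key input is the product formula
$$ \sum_{N\ge 0}[\mathrm{Quot}^N]\,t^N = \prod_{j=0}^{r-1} Z(C,\LL^j t), $$
obtained by stratifying the Quot scheme according to the support and local type of the torsion quotient and factoring the generating series over the points of $C$. Writing out the right-hand side with $Z(C,t)=\frac{(1+t)^{h^1(C)}}{(1-t)(1-\LL t)}$, the denominator becomes $(1-t)(1-\LL^{r}t)\prod_{i=1}^{r-1}(1-\LL^i t)^2$, so extracting the coefficient of $t^{N_D}$ and letting $\deg D\to\infty$ sends every factor $\frac{1}{1-\LL^i t}$ with $i\ge 1$ into arbitrarily high $\LL$-degree, while the factor $\frac{1}{1-t}$ merely accumulates all lower coefficients. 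The $(\LL)$-adic limit is therefore the value at $t=1$ of the remaining rational expression, namely
$$ \frac{(1+1)^{h^1(C)}}{1-\LL^{r}}\prod_{i=1}^{r-1}\frac{(1+\LL^i)^{h^1(C)}}{(1-\LL^i)^2}, $$
which is the stated formula; here $(1+1)^{h^1(C)}=\sum_k[\wedge^k h^1(C)]=\chi(J(C))$ under the $\lambda$-ring reading of the numerator $(1+t)^{h^1(C)}=\sum_k[\wedge^k h^1(C)]\,t^k$. Convergence is automatic because the successive coefficients lie in higher and higher powers of $\LL$.

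For (3), coprimality forces semistable $=$ stable, so $M(r,d)$ is smooth projective and carries a Poincar\'e bundle. I would use the map $Q_{r,d}^{ss}(D)\to M(r,d)$ sending a subsheaf $E\subset\mathcal{O}(D)^{\oplus r}$ to the isomorphism class $[E]$. Its fibre over $[E]$ is the open locus $U\subset \Hom(E,\mathcal{O}(D)^{\oplus r})$ of injections with torsion cokernel, modulo the scalar action of $\Aut(E)=\GG_m$, and the complement of $U$ has codimension tending to $\infty$ with $\deg D$. Using the Poincar\'e bundle, $\Hom(E,\mathcal{O}(D)^{\oplus r})$ globalizes to a vector bundle $\mathcal V$ on $M(r,d)$ of rank $N=r\,h^0(E^\vee(D))$, and $U/\GG_m$ is the complement of a high-codimension locus in the projective bundle $\PP(\mathcal V)$. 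The projective bundle formula gives $\chi(\PP(\mathcal V))=\chi(M(r,d))\,(1+\LL+\cdots+\LL^{N-1})$, and letting $\deg D\to\infty$ (so $N\to\infty$) yields $\chi(Q_{r,d}^{ss})=\frac{1}{1-\LL}\,\chi(M(r,d))$, the discarded loci being negligible in the limit.

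Finally, (2) is the recursive heart of the argument. Since the Harder--Narasimhan filtration is canonical, the Shatz stratum $Q_{(r,d)}^{ss}$ of matrix divisors with fixed polygon $(r,d)=((r_i,d_i))_i$ maps, via its associated graded, to the product $\prod_i Q_{r_i,d_i}^{ss}$ of semistable strata, the fibres recording the successive extension data of the filtration and being affine. The plan is to compute the dimensions of these affine fibres against the ambient rank and to verify that, after passing to the stabilized limit in $\widehat{K_0}(\Chow^{\eff})$, they assemble into the clean product $\prod_i\chi(Q_{r_i,d_i}^{ss})$ with no residual power of $\LL$, and that the resulting sum over all polygons reproduces the class of (1). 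I expect this bookkeeping --- reconciling the extension-space dimensions, the rank-$r$ ambient embedding, and the $(\LL)$-adic convergence of the sum over infinitely many polygons --- to be the main obstacle; the multiplicativity itself should rest on the fact that in the matrix-divisor normalization the extension contributions are precisely those absorbed by the factorization of the zeta product established in (1).
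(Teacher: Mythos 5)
The paper itself does not prove this theorem: it is quoted from del Ba\~no's thesis \cite{Bano;thesis} (which in turn rests on the Bifet--Ghione--Letizia analysis of matrix divisors \cite{BGhL}), so there is no in-paper proof to compare against; I am assessing your argument on its own terms.

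Your parts (1) and (3) are essentially correct and follow the standard route. For (1), the product formula $\sum_{N\geq 0}[\mathrm{Quot}^N]\,t^N=\prod_{j=0}^{r-1}Z(C,\LL^{j}t)$ together with the partial-fraction extraction of the $(\LL)$-adic limit at the pole $t=1$ is exactly the right mechanism, and your computation also confirms that the ``$n$'' in the displayed denominator must be read as $r$. Note that you assert the product formula with a one-line sketch; it is a genuine theorem (stratification by support plus the structure of punctual Quot schemes, or induction along $\cO^{\oplus r-1}\subset\cO^{\oplus r}$), but the input is correctly identified. For (3), realizing $Q^{ss}_{r,d}(D)$ as the complement of a high-codimension locus in $\PP(\mathcal{V})$, with $\mathcal{V}$ the relative Hom bundle built from the Poincar\'e bundle (which exists by coprimality), is also the standard argument; you still owe the dimension count showing that the non-injective maps form a locus whose codimension tends to infinity, and the uniform vanishing $H^1(E^\vee(D))=0$ over the bounded semistable family, but these are routine.

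Part (2) is a genuine gap, and it is the hard part of the theorem. First, your map from the stratum to $\prod_i Q^{ss}_{r_i,d_i}$ ``via its associated graded'' is not defined: the HN subquotients $E_i/E_{i-1}$ of a matrix divisor $E\subset\cO(D)^{\oplus r}$ are semistable bundles, but they carry no canonical matrix-divisor structure --- there is no canonical embedding $E_i/E_{i-1}\hookrightarrow\cO(D)^{\oplus r_i}$. What is canonical is $E_1\hookrightarrow\cO(D)^{\oplus r}$ (a point of a Quot scheme with ambient rank $r$, not $r_1$), and $E/E_1\hookrightarrow(\cO(D)^{\oplus r}/E_1)/\mathrm{torsion}$, whose ambient sheaf varies with the point; identifying the stratum with $\prod_i Q^{ss}_{r_i,d_i}$ requires an inductive analysis of these Quot-type fibrations, including the saturation/torsion bookkeeping, carried out in the ind-limit --- this is the content of the Bifet--Ghione--Letizia stratification theorem. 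Second, the affine-fiber picture is imported from the stack setting (where $\Bun$ of fixed HN type does fiber over the product of semistable loci with extension-stack fibers), but it cannot be literally correct here: an affine fibration whose rank grows with $\deg D$ would force the limit of the stratum classes to vanish $(\LL)$-adically, while a clean product with no residual power of $\LL$ for every stratum would make the sum over the infinitely many Shatz polygons divergent, contradicting additivity against the finite answer of (1). You flag exactly this tension yourself and then defer it, but resolving it --- pinning down the ind-structure of each stratum, its convergence, and the powers of $\LL$ that appear when the strata are reassembled (which is where the explicit $\LL$-exponents in the Laumon--Rapoport/del Ba\~no inversion formula come from) --- is the actual mathematical content of (2). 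As written, part (2) is a plan, not a proof.
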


Using these results and Laumon and Rapoport's inversion formula, del Ba\~no obtained the following formula.

\begin{theo}\cite{Bano;thesis} \cite[Theorem 4.11]{Bano1} \label{Bano;inversion}
Let $r \geq 2, d$ be two integers which are coprime to each other. Then the motivic Poincar\'e polynomial of $M(r,d)$ is
$$ \sum_{s=1}^n \sum_{n_1+\cdots+n_s=n, n_i \in \NN} (-1)^{s-1} \frac{((1+1)^{h^1(C)})^s}{(1-\LL)^{s-1}} \prod_{j=1}^s \prod_{i=1}^{n_j-1} \frac{(1+\LL^{ i})^{h^1(C)}}{(1-\LL^{ i})(1-\LL^{ i+1})} $$
$$  \prod_{j=1}^{s-1}\frac{1}{1-\LL^{ n_j+n_{j+1}}}  \LL^{ (\sum_{i < j}n_in_j(g-1)+\sum_{i=1}^{s-1}(n_i+n_{i+1})\langle -\frac{n_1+\cdots+n_i}{n}d \rangle)}. $$
\end{theo}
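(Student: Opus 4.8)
The plan is to carry out the motivic Harder--Narasimhan--Atiyah--Bott argument in the completed ring $\widehat{K_0}(\Chow^{\eff})$, using the three parts of Theorem \ref{Bano;ind-variety} as black boxes together with the Laumon--Rapoport inversion to solve the resulting recursion. Write $n=r$. The starting point is the Shatz stratification of the ind-variety of matrix divisors, $Q_{n,d}=\bigsqcup_\mu Q_\mu^{ss}$, indexed by Harder--Narasimhan types $\mu=((r_1,d_1),\dots,(r_s,d_s))$ with $\sum_i r_i=n$, $\sum_i d_i=d$ and strictly decreasing slopes $d_1/r_1>\cdots>d_s/r_s$. The stratification supplies the strata together with their codimensions $m_\mu$, while Theorem \ref{Bano;ind-variety}(2) identifies the semistable content of each stratum as a product; combining them one obtains the motivic Harder--Narasimhan recursion
\[
\chi(Q_{n,d})=\sum_{\mu}\LL^{m_\mu}\prod_{i=1}^{s}\chi(Q^{ss}_{r_i,d_i}),\qquad m_\mu=\sum_{i<j}\bigl(r_ir_j(g-1)+r_jd_i-r_id_j\bigr),
\]
where $m_\mu$ is computed by Riemann--Roch as the total dimension of the negative extension groups between successive semistable quotients. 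The sum converges in $\widehat{K_0}(\Chow^{\eff})$ because $m_\mu\to\infty$ as $\mu$ becomes unbalanced, which is the stabilization asserted in Theorem \ref{Bano;ind-variety}(2).

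The left-hand side is known and, by Theorem \ref{Bano;ind-variety}(1), independent of $d$; the right-hand side is a ``convolution'' over the poset of Harder--Narasimhan types whose leading ($s=1$) term is the unknown $\chi(Q^{ss}_{n,d})$. The next step is to invert this triangular system. I would apply the Laumon--Rapoport inversion formula, the M\"obius inversion attached to the Harder--Narasimhan poset, to express $\chi(Q^{ss}_{n,d})$ as an alternating sum of products of the \emph{full} classes $\chi(Q_{n_j,\cdot})$ of the smaller ranks $n_j$. This is where each structural feature of the target formula is produced: the M\"obius function of the poset yields the sign $(-1)^{s-1}$; re-summing the degree variables $d_i$ (constrained by $\sum_i d_i=d$ and the slope inequalities) as geometric series in $\LL$ yields, for each pair of adjacent blocks, a factor $\frac{1}{1-\LL^{n_j+n_{j+1}}}$; the minimal admissible degree at the $i$-th break of the slope polygon, under fixed total degree $d$, produces the fractional parts $\langle -\frac{n_1+\cdots+n_i}{n}d\rangle$; and the $(g-1)$ part of the weights $m_\mu$ survives the resummation as the quadratic exponent $\sum_{i<j}n_in_j(g-1)$.

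It then remains to substitute the closed form of Theorem \ref{Bano;ind-variety}(1) for each block class $\chi(Q_{n_j,\cdot})$ --- legitimate precisely because that class is $d$-independent, so the degrees $d_j$ enter only through the slope combinatorics already resummed --- and, finally, to invoke Theorem \ref{Bano;ind-variety}(3), $\chi(Q^{ss}_{n,d})=\frac{1}{1-\LL}\chi(M(n,d))$, which holds because $\gcd(n,d)=1$ forces semistable $=$ stable with automorphism group only the scalars $\GG_m$. Clearing the common factor $\frac{1}{1-\LL}$ and bookkeeping the $(1-\LL)$'s contributed by each block then assembles the stated expression for $\chi(M(r,d))$.

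The main obstacle is the inversion step: one must verify that the Laumon--Rapoport formula is indeed the M\"obius inverse for this poset and, above all, carry out the constrained degree resummation correctly, checking that the geometric series converge in the completion and collapse to exactly the factors $\frac{1}{1-\LL^{n_j+n_{j+1}}}$ with exactly the fractional-part exponents $\langle -\frac{n_1+\cdots+n_i}{n}d\rangle$. The combinatorial reconciliation of the degree sums with the weights $m_\mu$ --- turning the terms linear in the $d_i$ into the $\langle\,\cdot\,\rangle$ exponents while the $(g-1)$ terms pass through untouched --- is the delicate part; everything else is formal manipulation of the rational expressions supplied by Theorem \ref{Bano;ind-variety}.
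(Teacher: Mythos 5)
Your proposal is correct and follows exactly the route the paper itself indicates for this cited result: the statement is del Ba\~no's theorem, which the paper attributes to combining Theorem \ref{Bano;ind-variety} with the Laumon--Rapoport inversion formula, i.e.\ precisely the Shatz-stratification recursion plus M\"obius-type inversion that you outline. The paper gives no independent proof, and your reconstruction --- including the codimension weights $m_\mu=\sum_{i<j}\bigl(r_ir_j(g-1)+r_jd_i-r_id_j\bigr)$ (which must indeed be inserted into the recursion, even though the paper's quotation of Theorem \ref{Bano;ind-variety}(2) omits them), the constrained degree resummation yielding the factors $\frac{1}{1-\LL^{n_j+n_{j+1}}}$ and the fractional-part exponents, and the final conversion via Theorem \ref{Bano;ind-variety}(3) --- matches del Ba\~no's argument.
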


\subsection{Completions of Grothendieck rings of Voevodsky motives}

Because of Eilenberg's swindle we see that $K_0(\DM)=0.$ Therefore we need a version of completion of Voevodsky motives in order to obtain meaningful class of $M(r,L)$ using the strategy of \cite{Harder, HN}. Indeed, there are several known versions of completions of Grothendieck ring of motives. We will briefly recall a construction sketched in \cite{HL2}. Let $\DM$ be the symmetric monoidal stable $\infty$-category of Voevodsky motives.

\begin{defi}\cite{HL1}
The dimensional filtration of $\DM$ is a $\ZZ$-indexed filtration 
$$ \cdots \subset \DM_{d} \subset \DM_{d+1} \subset \cdots $$
where $\DM_d$ denotes the smallest localizing subcategory of $\DM$ containing $h_c(X)(d')$ for all separated schemes $X$ of finite type over $\KK$ and all integers $d'$ with $dim(X)+d' \leq d.$
\end{defi}

We can define the dimensional completion of $\DM_{\gm}$ as follows.

\begin{defi}
\label{def:completion-dm}
$$ \widehat{\DM}_{\gm}:=\lim_{d \in \NN} \DM_{\gm} / F^d(\DM_{\gm}) $$
\end{defi}

There is a ring homomorphism $\chi_c : \widehat{K_0}(\bVar) \to K_0(\widehat{\DM}_{\gm})$ induced by the functor $\bVar \to \DM_{\gm}.$ Therefore we can compare the classes of $\Bun_{r,d}$ in both sides. First, Behrend and Dhillon computed the class of $\Bun_{r,d}$ as follows.

\begin{theo}[Behrend-Dhillon] In $\widehat{K_0}(\bVar),$ the class of $\Bun_{\mathrm{SL_r}}$ is as follows
\begin{eqnarray*}
 [\Bun_{\mathrm{SL_r}}]&=& \LL^{(r^2-1)(g-1)} \prod_{i=2}^{r} Z(C,\LL^{-i} )
\end{eqnarray*}
where $Z(C,t):=\sum_{k \geq 0}[C_k] t^k.$
\end{theo}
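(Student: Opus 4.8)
The plan is to first obtain a closed formula for the class of the full rank-$r$, degree-$d$ stack $\Bun_{r,d}$ in $\widehat{K_0}(\bVar)$, and then to descend along the determinant morphism to the fixed-determinant stack $\Bun_{\mathrm{SL}_r}$. The computational engine is the matrix-divisor model of Bifet--Ghione--Letizia: every rank-$r$ bundle together with a generic trivialization is a matrix divisor $E \hookrightarrow \mathcal{K}_C^{\oplus r}$, and two matrix divisors define isomorphic bundles exactly when they differ by an element of $\mathrm{GL}_r(\mathcal{K}_C)$ acting on $\mathcal{K}_C^{\oplus r}$. Thus $\Bun_{r,d} \cong [Q_{r,d}/\mathrm{GL}_r(\mathcal{K}_C)]$, where $Q_{r,d}$ is the ind-variety of matrix divisors, whose motivic Poincar\'e polynomial $\chi(Q_{r,d})$ is computed by del Ba\~no in Theorem \ref{Bano;ind-variety}(1).

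First I would turn this presentation into an identity of classes. Passing to $\widehat{K_0}(\bVar)$ and normalizing by the class of the change-of-generic-trivialization action produces $[\Bun_{r,d}]$ from $\chi(Q_{r,d})$; concretely this amounts to stripping off the $\mathbb{G}_m$-gerbe and the infinitely many local factors, and is the normalization already implicit in parts (2)--(3) of Theorem \ref{Bano;ind-variety}. Next I would reorganize del Ba\~no's product, which is written in positive Lefschetz powers $\frac{(1+\LL^{i})^{h^1(C)}}{(1-\LL^{i})^{2}}$, into the negative-power factors $Z(C,\LL^{-i})$ appearing in the statement. The bridge is Kapranov's functional equation $Z(C,t) = \LL^{g-1}t^{2g-2}Z(C,\LL^{-1}t^{-1})$ from \cite{Kapranov}: setting $t=\LL^{-i}$ rewrites $Z(C,\LL^{-i})$ as a positive-power factor of del Ba\~no's type times an explicit power of $\LL$, and collecting these powers should reproduce the global Lefschetz exponent.

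To pass from the rank-$r$ stack to $\Bun_{\mathrm{SL}_r}$, I would use the determinant morphism $\det : \Bun_{r,d} \to \Pic^d(C) \times B\mathbb{G}_m$, whose fibre is $\Bun_{\mathrm{SL}_r}$. Since $\dim \Bun_{r,d} = r^2(g-1)$, $\dim\Bun_{\mathrm{SL}_r}=(r^2-1)(g-1)$, and $\dim(\Pic^d(C)\times B\mathbb{G}_m) = g-1$, the dimensions add correctly, and motivically $[\Bun_{r,d}] = \frac{[\mathrm{Jac}(C)]}{\LL-1}\,[\Bun_{\mathrm{SL}_r}]$. Dividing out the factor $\frac{[\mathrm{Jac}(C)]}{\LL-1}$ removes exactly the $\mathrm{GL}_1$/Picard contribution --- the honest replacement for the divergent $i=1$ term $Z(C,\LL^{-1})$ --- and lowers the Lefschetz exponent by $g-1$, leaving $\LL^{(r^2-1)(g-1)}\prod_{i=2}^{r}Z(C,\LL^{-i})$. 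The product now runs from $i=2$, so every factor avoids the pole of $Z(C,t)$ at $\LL t = 1$ and the expression converges in $\widehat{K_0}(\bVar)$, reflecting that $\mathrm{SL}_r$ is simply connected.

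The main obstacle is the normalization in the second step: making rigorous the passage from the ind-variety $Q_{r,d}$, which has a genuine positive-dimensional class, to the stack quotient, whose class lives in the dimensional completion and formally involves dividing by the ill-defined class of the infinite-dimensional framing group. This requires the Atiyah--Bott/stratification argument underlying Theorem \ref{Bano;ind-variety} to guarantee convergence and to pin down the $\mathbb{G}_m$-gerbe and local normalizing factors. The subsequent index-and-power bookkeeping matching del Ba\~no's positive-power product to $\prod_{i=2}^r Z(C,\LL^{-i})$ via the functional equation is delicate but routine once the normalization is fixed. The determinant descent is comparatively harmless, provided one checks that $\det$ is motivically multiplicative --- in particular that the $\mu_r$-banding of the gerbe contributes trivially --- so that $[\Bun_{\mathrm{SL}_r}]$ is genuinely the fibre class.
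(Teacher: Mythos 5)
The paper does not prove this theorem: it is stated as a direct citation of Behrend--Dhillon \cite{BD}, so there is no internal proof to compare against, and your proposal must stand on its own. As written it has a decisive gap: a completion mismatch. Your computational engine, Theorem \ref{Bano;ind-variety}, is a statement about motivic Poincar\'e polynomials, i.e.\ about elements of $\widehat{K_0}(\Chow^{\eff})$, which is completed at the ideal $(\LL)$: there $\LL^m \to 0$, and the \emph{unnormalized} classes $\chi(Q_{r,d}(D))$ converge as $D$ grows precisely because the strata being added have large dimension and are therefore $\LL$-adically small. The identity you must prove lives in $\widehat{K_0}(\bVar)$ with the \emph{dimensional} completion (Definition \ref{def:dimensional-completion}), whose topology runs the opposite way: $\LL^{-m}\to 0$, the classes $[Q_{r,d}(D)]$ themselves diverge, and what converges is $[Q_{r,d}(D)]$ normalized by $\LL^{-\dim}$ of the fibres of $Q_{r,d}(D)\to\Bun_{r,d}$. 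There is no ring homomorphism $\widehat{K_0}(\Chow^{\eff})\to\widehat{K_0}(\bVar)$ through which del Ba\~no's formula could be transported (the natural map $\chi_c$ goes \emph{out of} $K_0(\bVar)$, and the two completions are incompatible --- this is exactly the warning in the Remark closing Section 2). So the convergence statement you need has to be proved from scratch in $K_0(\bVar)$, by running the Bifet--Ghione--Letizia stratification with codimension bookkeeping; that is not a ``normalization'' of del Ba\~no's theorem, it \emph{is} the content of Behrend--Dhillon's proof. The same applies to the presentation $\Bun_{r,d}\cong[Q_{r,d}/\mathrm{GL}_r(\mathcal{K}_C)]$: a quotient by an infinite-dimensional ind-group lies outside the paper's definition of motivic classes of stacks (quotients by affine algebraic groups, stacks essentially of finite type); the rigorous mechanism is the forgetful morphism $Q_{r,d}(D)\to\Bun_{r,d}$, whose fibre over $E$ is the open subset of injections with torsion cokernel inside $\Hom(E,\cO(D)^{\oplus r})$, with complement of growing codimension. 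You correctly flag this as ``the main obstacle,'' but flagging it does not fill it.

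The determinant descent is also not ``comparatively harmless,'' and the difficulty is not where you place it. The $B\GG_m$ factor is innocuous ($\GG_m$ is special, so the relevant torsor is Zariski-locally trivial and contributes $1/(\LL-1)$), but the residual fibration $\Bun_{r,0}\to\mathrm{Jac}(C)$, all of whose fibres are isomorphic to the fixed-determinant stack, is trivialized only after pulling back along the $r$-th power isogeny $\mathrm{Jac}(C)\to\mathrm{Jac}(C)$, because identifying nearby fibres requires an $r$-th root of the Poincar\'e bundle, which exists \'etale-locally but not Zariski-locally. \'Etale-locally trivial fibrations are \emph{not} multiplicative in $K_0(\bVar)$: the isogeny itself is a $\mathrm{Jac}[r]$-torsor, and multiplicativity for it would read $[\mathrm{Jac}(C)]=r^{2g}[\mathrm{Jac}(C)]$. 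So the identity $[\Bun_{r,d}]=\frac{[\mathrm{Jac}(C)]}{\LL-1}\cdot[\Bun_{\mathrm{SL}_r}]$ needs a genuine argument, and the point to address is this finite \'etale twisting by Jacobian torsion, not the $\mu_r$-banding you mention. What does hold up in your proposal is the formal bookkeeping: the dimension counts, and the functional-equation conversion $\prod_{i=1}^{r-1}Z(C,\LL^{i})=\LL^{(r^2-1)(g-1)}\prod_{i=2}^{r}Z(C,\LL^{-i})$, using $\sum_{i=1}^{r-1}(2i+1)=r^2-1$; but those are manipulations of rational expressions, not the analytic content of the theorem.
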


The second equality follows from the functional equation of the motivic zeta function $Z(C,t)$.
On the other hand, Hoskins and Lehalleur computed the class of $\Bun_{r,d}$ in the category of Voevodsky's motives.

\begin{theo}\cite[Theorem 1.1]{HL2}
We have the following isomorphism in $\DM.$
$$ h(\mathrm{Bun}_{r,d}) = h(Jac(C)) \otimes h(B\GG_m) \otimes \bigotimes_{i=1}^{r-1} Z(C,\LL^{\otimes i}) $$
\end{theo}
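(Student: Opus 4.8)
The plan is to prove the formula by lifting del Ba\~no's computation of the motivic class of the ind-variety of matrix divisors (Theorem \ref{Bano;ind-variety}) from an equality in a completed Grothendieck ring to an isomorphism of Voevodsky motives, using the Bifet--Ghione--Letizia presentation of $\Bun_{r,d}$ recalled in Section 2.4. The two structural factors of the formula have a clean conceptual origin, and I would isolate them first. The rank one case gives $\Bun_{1,d} \cong \Pic^d_C \times B\GG_m$, since every line bundle has automorphism group $\GG_m$; together with $\Pic^d_C \cong Jac(C)$ this yields $h(\Bun_{1,d}) = h(Jac(C)) \otimes h(B\GG_m)$ and explains the abelian factor $h(Jac(C)) \otimes h(B\GG_m)$, which is precisely the discrepancy between $\Bun_{r,d}$ and $\Bun_{\mathrm{SL}_r}$ visible in the Behrend--Dhillon formula.

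For the remaining factor $\bigotimes_{i=1}^{r-1} Z(C,\LL^{\otimes i})$ I would proceed as follows. First, present $\Bun_{r,d}$ as a colimit over effective divisors $D$ of the quotient stacks built from the Quot schemes $Q_{r,d}(D)$ of matrix divisors lying in $\cO(D)^{\oplus r}$, and show that the associated motives stabilize in the dimensional completion $\widehat{\DM}_{\gm}$ of Definition \ref{def:completion-dm}; the required vanishing of the error terms in deeper and deeper steps of the filtration is the motivic counterpart of the convergence in Theorem \ref{Bano;ind-variety}(1). Second, compute the motive of each $Q_{r,d}(D)$: stratifying the Quot scheme by the type of the torsion quotient of $\cO(D)^{\oplus r}$ produces strata that are affine bundles over products of symmetric powers $C_k$, and I would use a motivic Bia\l ynicki-Birula, or cell-type, decomposition to assemble these into a sum of Tate twists of motives $h(C_k)$. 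Passing to the colimit and dividing out the motivic volume of the gauge group then produces a product of zeta factors $Z(C,\LL^{\otimes i})$, and Kapranov's functional equation (Section 2.3) converts the negative Tate twists appearing naturally in the matrix-divisor computation into the positive twists of the stated formula.

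As a consistency check one can reconcile this with the known class-level statements: under Bondarko's isomorphism $K_0(\Chow^{\eff}) = K_0(\DM_{\gm}^{\eff})$ and the map $\chi_c$, the class of the resulting motive must agree with the class obtained from the Behrend--Dhillon computation in $\widehat{K_0}(\bVar)$, and with del Ba\~no's $\chi(Q_{r,d})$ after the gauge-group normalization. This does not by itself prove the theorem, since an isomorphism of motives is strictly stronger than an equality of classes, but it pins down all exponents and multiplicities.

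The hard part will be the second step, namely upgrading del Ba\~no's $K_0$-level decomposition of the Quot schemes to an actual isomorphism of motives in $\widehat{\DM}_{\gm}$. At the level of classes a stratification suffices, but a genuine motivic identity requires the strata to be motivically cellular, that is, built from Tate twists of the symmetric-power motives $h(C_k)$ with no unresolved extension problems. This is what a motivic Bia\l ynicki-Birula decomposition (or, alternatively, an induction on $r$ using the universal short exact sequence and the projective-bundle formula) must supply. Controlling the convergence of these motives in the dimensional filtration, so that the colimit exists and equals $h(\Bun_{r,d})$, is the second delicate point; everything else reduces to the rationality and functional equation of $Z(C,t)$ already recorded in Section 2.3.
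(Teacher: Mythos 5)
The first thing to note is that the paper does not prove this statement at all: it is imported verbatim from Hoskins--Lehalleur \cite[Theorem 1.1]{HL2}, so the only question is whether your proposal would constitute a correct proof of the cited result. Its geometric skeleton is in fact the same as that of the actual proof: the Bifet--Ghione--Letizia matrix-divisor presentation \cite{BGhL}, Bia\l ynicki--Birula (cell-type) decompositions of the Quot schemes $Q_{r,d}(D)$ into affine bundles over products of symmetric powers, and passage to the limit over $D$.

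The genuine gap is categorical, not geometric. The theorem asserts an isomorphism in $\DM$, where $h(\Bun_{r,d})$ is an honest homotopy colimit and the right-hand side exists because $\DM$ is cocomplete (each factor $Z(C,\LL^{\otimes i})$ is an infinite direct sum $\bigoplus_j h(C_j)\otimes \LL^{\otimes ij}$). Your proposal instead works in the dimensional completion $\widehat{\DM}_{\gm}$ of Definition \ref{def:completion-dm} and ends by ``dividing out the motivic volume of the gauge group.'' Neither step survives at the level of motives: division by the class of a group is only defined in a localized or completed Grothendieck ring such as $\widehat{K_0}(\bVar)$ or $K_0(\widehat{\DM}_{\gm})$, and an isomorphism in $\widehat{\DM}_{\gm}$ does not descend to one in $\DM$ (the infinite sums on the right-hand side are not even geometric objects, only their images in the completion are). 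So your argument, even if completed, would yield the class-level identity in $K_0(\widehat{\DM}_{\gm})$ --- which is the statement this paper actually uses --- but, as you yourself observe in your consistency-check paragraph, that is strictly weaker than the theorem. The repair is precisely the content of \cite{HL1, HL2}: define motives of quotient stacks via homotopy colimits in $\DM$; show by codimension estimates that the transition maps of the matrix-divisor system become isomorphisms in any fixed range of degrees, so that $h(\Bun_{r,d}) \cong \mathrm{hocolim}_D\, h(Q_{r,d}(D))$ holds in $\DM$ with no completion; and observe that the factor $h(Jac(C))\otimes h(B\GG_m)$ emerges from this colimit (already in rank one, where $\mathrm{hocolim}_k\, h(C_k) \cong h(J(C))\otimes h(B\GG_m)$ because the Abel--Jacobi maps $C_k \to J(C)$ are projective bundles of growing rank), rather than from any gauge-group normalization. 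Your two flagged difficulties --- motivic cellularity of the strata and convergence --- are real, but they are resolved by exactly this framework; without it, no amount of work inside $\widehat{\DM}_{\gm}$ proves the stated isomorphism.
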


Combining \cite[Lemma 4.4]{HL1} and \cite[Theorem 1.1]{HL2}, we see that the two descriptions are compatible to each other.

\begin{theo}[Hoskins-Lehalleur]
We have the following identity in $K_0(\widehat{\DM}_{\gm}).$
$$ \chi_c([\Bun_{r,d}]) = [h_c(\Bun_{r,d})] $$
\end{theo}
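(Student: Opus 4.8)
The plan is to prove the compatibility identity $\chi_c([\Bun_{r,d}]) = [h_c(\Bun_{r,d})]$ in $K_0(\widehat{\DM}_{\gm})$ by reducing both sides to the same explicit expression in terms of the motivic zeta function $Z(C,t)$ and then invoking the already-established comparison between the two completions. The two theorems just stated give us the raw material: the Behrend--Dhillon formula computes $[\Bun_{r,d}]$ (equivalently $[\Bun_{\mathrm{SL}_r}]$ together with the Jacobian and $B\GG_m$ factors) in $\widehat{K_0}(\bVar)$ as a product involving $Z(C,\LL^{-i})$, while the Hoskins--Lehalleur formula computes $h(\Bun_{r,d})$ in $\DM$ as $h(Jac(C)) \otimes h(B\GG_m) \otimes \bigotimes_{i=1}^{r-1} Z(C,\LL^{\otimes i})$. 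The map $\chi_c : \widehat{K_0}(\bVar) \to K_0(\widehat{\DM}_{\gm})$ is a ring homomorphism, so it commutes with products and with the formation of the geometric series and zeta-function expressions; applying it term by term to the Behrend--Dhillon formula is the core computation.

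First I would pin down that $\chi_c$ sends the class $[C_k]$ of each symmetric product to its Chow/Voevodsky motive $h(C_k)$, which holds because $C_k$ is smooth projective and $\chi_c$ was defined precisely by $\chi_c([X]) = h(X)$ on $\SmProjVar$. Consequently $\chi_c$ carries the generating series $Z(C,t) = \sum_k [C_k] t^k$ to $\sum_k h(C_k) t^k$, i.e. to the motivic zeta function in $\DM$, and likewise $\chi_c(\LL) = \LL$ so that substituting $t = \LL^{-i}$ or $t = \LL^{\otimes i}$ is respected. Second I would apply $\chi_c$ to the whole Behrend--Dhillon product, using multiplicativity to push it through the finite product over $i$, the Jacobian factor, and the $B\GG_m$ factor; the functional equation of $Z(C,t)$ reconciles the appearance of $Z(C,\LL^{-i})$ on the variety side with $Z(C,\LL^{\otimes i})$ on the motivic side. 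This should turn $\chi_c([\Bun_{r,d}])$ into exactly the Hoskins--Lehalleur expression for $[h_c(\Bun_{r,d})]$.

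The step I expect to be the main obstacle is controlling convergence and well-definedness in the completed rings: the expressions are infinite products/series, and one must check that $\chi_c$ is continuous for the relevant filtrations, so that it genuinely commutes with the completion and with the limits defining $Z(C,t)$ and the class of the stack. Concretely, I would verify that $\chi_c$ is compatible with the dimensional filtration $F^m(K_0(\bVar))$ on the source and the filtration defining $K_0(\widehat{\DM}_{\gm})$ on the target, so that it descends to a continuous homomorphism between the completions; this is where the cited \cite[Lemma 4.4]{HL1} enters, identifying the image of the filtration and guaranteeing that the term-by-term application above converges to the intended element. The remaining subtlety is that $[h_c(\Bun_{r,d})]$ involves the compactly supported functor $h_c$ rather than $h$; here I would use the relation $h_c(X) = h(X)^\vee \otimes \LL^{d}$ for smooth $X$ together with the functional equation for $Z(C,t)$ to match duals and Tate twists, confirming that the $h$-version produced by Hoskins--Lehalleur and the $h_c$-version in the statement agree after passing to the Grothendieck group.

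Once both sides are reduced to the same explicit formula in $K_0(\widehat{\DM}_{\gm})$, the identity $\chi_c([\Bun_{r,d}]) = [h_c(\Bun_{r,d})]$ follows. I would close by remarking that the genuinely new content is entirely in the compatibility of $\chi_c$ with the completions and in the zeta-function bookkeeping, while the two input formulas are quoted as stated; no further geometric input about $\Bun_{r,d}$ is needed beyond what the Behrend--Dhillon and Hoskins--Lehalleur theorems already supply.
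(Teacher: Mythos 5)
Your overall logical scheme (compute $\chi_c$ of the Behrend--Dhillon class, compute $[h_c(\Bun_{r,d})]$ from the Hoskins--Lehalleur formula, and match via the functional equation) would indeed prove the identity, and the zeta-function bookkeeping you describe does work: $\LL^{(r^2-1)(g-1)}\prod_{i=2}^{r}Z(C,\LL^{-i}) = \prod_{i=1}^{r-1}Z(C,\LL^{i})$. But there is a genuine gap at the step where you produce the right-hand side. The theorem's right-hand side involves $h_c(\Bun_{r,d})$, while \cite[Theorem 1.1]{HL2} computes $h(\Bun_{r,d})$; your bridge between them is the duality $h_c(X) \cong h(X)^\vee \otimes \LL^{\dim X}$, which the paper states (and which holds) only for \emph{smooth schemes of finite type}. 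The stack $\Bun_{r,d}$ is neither: its motive is an infinite direct sum (already the factor $h(B\GG_m) = \bigoplus_{i \geq 0}\LL^{i}$ is not a dualizable object of $\DM_{\gm}$), so neither the duality formula nor the passage $[M] \mapsto [M^\vee]$ on Grothendieck-group classes is available without substantial extra input. Indeed, even the \emph{definition} of $h_c$ for such a stack (via stratifications or exhaustions, and the verification that the result is well defined in the completed category) is part of the Hoskins--Lehalleur machinery that this step silently requires.

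This also means you have mislocated the content of \cite[Lemma 4.4]{HL1}, which you treat as a routine continuity-of-$\chi_c$ check. In the paper's proof that lemma is the entire point: it is the stratum-by-stratum compatibility between the class of the compactly supported motive of the stack and the dimensional-completion class in $\widehat{K_0}(\bVar)$, i.e.\ essentially the identity $\chi_c([\Bun_{r,d}]) = [h_c(\Bun_{r,d})]$ itself; the paper's proof is simply to combine that lemma with \cite[Theorem 1.1]{HL2}, with no explicit formula-matching at all. So your argument is caught in an awkward middle ground: if Lemma 4.4 is granted in its true strength, your Behrend--Dhillon computation and functional-equation comparison are superfluous; if it is only granted as the filtration-compatibility statement you describe, then your proof cannot close, because the $h$-to-$h_c$ conversion for the infinite-type stack remains unjustified.
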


In this paper, we will compute the classes of $M(2,L)$ and $M(3,L)$ in $\widehat{K
_0}(\bVar),$ $\widehat{K_0}(\Chow^{\eff})$ and $K_0(\widehat{\DM}_{\gm})$ using the above result.

\begin{rema}
Note that we are using different completions for $\widehat{K_0}(\Chow^{\eff})$ and $\widehat{K_0}(\bVar)$ (or $K_0(\widehat{\DM}_{\gm})$). In $\widehat{K_0}(\Chow^{\eff})$ we are using the completion with respect to the ideal generated by $\LL$, whereas in $\widehat{K_0}(\bVar)$ we are using dimensional completion (Definition \ref{def:dimensional-completion}), and in $K_0(\widehat{\DM}_{\gm})$ we use Definition \ref{def:completion-dm}.
\end{rema}

\bigskip

\section{symmetric products of curves}

In this section we study motives of symmetric products of curves. See \cite{Bano1, Bano2} for notations and backgrounds about them.

\subsection{$\lambda$-structure}

Let $\cC$ be a $\QQ$-linear pseudo-abelian tensor category.

\begin{defi}\cite{Bano1}
A $\lambda$-structure on $\cC$ is a sequence of functors $\lambda^n : \cC \to \cC$ for $n \in \NN$ such that \\
(1) $\lambda^0$ is the constant functor sending every object to $1$ and every morphism to $Id_{1},$ \\
(2) $\lambda^1$ is the identity functor, \\
(3) there are natural isomorphisms $\lambda^n(X \oplus Y) = \oplus_{a+b=n}\lambda^a(X)  \lambda^b(Y).$ 
\end{defi}

Because $\cC$ is a tensor category, there is a morphism $ \phi(\sigma) : M^{ n} \to M^{ n}$ for every $M \in \cC$ and $\sigma \in S_n.$ One can check that $\frac{1}{n!} \sum_{\sigma \in S_n}\phi(\sigma) : M^{ n} \to M^{ n}$ is a projector. See \cite{Bano1} for more details.

\begin{defi}
For each $M \in \cC$ and $n \in \NN$ we define
$$ \lambda^n(M)=(M^{ n}, \frac{1}{n!} \sum_{\sigma \in S_n}\phi(\sigma) ) $$
\end{defi}

S. del Ba\~no showed that the above functors define a $\lambda$-structure on $\cC.$

\begin{theo}\cite[Theorem 3.4]{Bano1}
The functor $\lambda^n$ defined above gives a $\lambda$-structure on $\cC.$
\end{theo}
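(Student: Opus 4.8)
The plan is to check the three axioms of a $\lambda$-structure directly, the first two being formal and the third carrying all the content. For axiom (1), when $n=0$ the empty tensor power $M^{\otimes 0}$ is by convention the unit object $1$, the group $S_0$ is trivial, so the projector is $\id_1$ and $\lambda^0(M)=(1,\id_1)=1$, with the action on morphisms equal to $\id_1$. For axiom (2), $S_1=\{e\}$, so $\lambda^1(M)=(M,\id_M)=M$ and $\lambda^1$ is the identity functor. Thus everything reduces to producing the natural isomorphism
$$ \lambda^n(X\oplus Y)\;\cong\;\bigoplus_{a+b=n}\lambda^a(X)\otimes\lambda^b(Y) $$
of axiom (3).

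To establish (3), I would first distribute $\otimes$ over $\oplus$ to obtain the canonical decomposition $(X\oplus Y)^{\otimes n}\cong\bigoplus_{w}Z_w$, indexed by the $2^n$ words $w\in\{X,Y\}^n$, where $Z_w$ is the tensor product of the factors prescribed by $w$. Grouping the words according to the number $a$ of occurrences of $X$ (so $b=n-a$) gives $(X\oplus Y)^{\otimes n}\cong\bigoplus_{a=0}^{n}W_a$ with $W_a=\bigoplus_{|w|_X=a}Z_w$. Since permuting tensor factors cannot alter the number of $X$'s in a word, each $\phi(\sigma)$ preserves every $W_a$; hence the symmetrizing projector $\pi_n=\frac{1}{n!}\sum_{\sigma\in S_n}\phi(\sigma)$ restricts to a projector on each $W_a$ and
$$ \lambda^n(X\oplus Y)=\image(\pi_n)\cong\bigoplus_{a=0}^{n}\image\bigl(\pi_n|_{W_a}\bigr). $$
It then remains to identify $\image(\pi_n|_{W_a})$ with $\lambda^a(X)\otimes\lambda^b(Y)$. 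The stabilizer in $S_n$ of the standard word $w_0=X^{\otimes a}\otimes Y^{\otimes b}$ is the Young subgroup $S_a\times S_b$, and the $\binom{n}{a}$ summands of $W_a$ form one $S_n$-orbit, indexed by $S_n/(S_a\times S_b)$. Inserting $\iota\colon Z_{w_0}\hookrightarrow W_a$, applying $\pi_n$, and projecting back to $Z_{w_0}$ kills all terms $\phi(\sigma)$ except those with $\sigma\in S_a\times S_b$, giving $\pr_{w_0}\circ\pi_n\circ\iota=\frac{1}{\binom{n}{a}}(\pi_a^X\otimes\pi_b^Y)$, where $\pi_a^X,\pi_b^Y$ are the symmetrizers whose images are $\lambda^a(X),\lambda^b(Y)$. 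As $\binom{n}{a}$ is invertible in the $\QQ$-linear category, the maps $\pi_n\circ\iota$ and $\binom{n}{a}\pr_{w_0}$ restrict to mutually inverse morphisms between $\lambda^a(X)\otimes\lambda^b(Y)$ and $\image(\pi_n|_{W_a})$, the composite $\pi_n\circ\iota\circ\pr_{w_0}$ being computed by averaging over the orbit. All the maps involved come from the coherence constraints of the tensor category, so the isomorphism is natural in $X$ and $Y$.

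The step I expect to be the main obstacle is this last identification. Because $\cC$ is only pseudo-abelian, one cannot argue with ``elements''; instead one must write genuine morphisms between the images of the relevant idempotents and verify both that they are compatible with the projectors and that the two composites equal the respective identities, while keeping careful track of the normalizing constants $n!$, $a!\,b!$ and $\binom{n}{a}$. These are exactly the scalars that are invertible precisely because $\cC$ is $\QQ$-linear, so once the orbit-averaging identity and this bookkeeping are settled, the decomposition of axiom (3) follows, and together with (1) and (2) it gives the $\lambda$-structure.
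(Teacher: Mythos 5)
This statement is quoted in the paper from del Ba\~no \cite[Theorem 3.4]{Bano1}; the paper itself gives no proof, so there is nothing internal to compare your argument against. Judged on its own, your proposal is correct and is essentially the standard proof of this fact. Axioms (1) and (2) are formal, as you say, and your treatment of axiom (3) has all the right ingredients: the decomposition of $(X\oplus Y)^{\otimes n}$ into the $2^n$ word summands $Z_w$, the observation that each $\phi(\sigma)$ permutes the $Z_w$ within a fixed $W_a$ so that the symmetrizer $\pi_n$ restricts to each $W_a$, and the identification of the stabilizer of $w_0=X^{\otimes a}\otimes Y^{\otimes b}$ with the Young subgroup $S_a\times S_b$, giving $\pr_{w_0}\circ\pi_n\circ\iota=\tfrac{1}{\binom{n}{a}}\bigl(\pi_a^X\otimes\pi_b^Y\bigr)$. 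The one step you flag as the main obstacle does indeed close up: writing $u=\pi_n\circ\iota\circ(\pi_a^X\otimes\pi_b^Y)$ and $v=\binom{n}{a}(\pi_a^X\otimes\pi_b^Y)\circ\pr_{w_0}\circ\pi_n$, the composite $v\circ u$ equals $\pi_a^X\otimes\pi_b^Y$ by your coset count, while $u\circ v=\pi_n|_{W_a}$ follows from the absorption identity $\pi_n\circ\phi(\sigma)=\pi_n=\phi(\sigma)\circ\pi_n$ together with $\id_{W_a}=\sum_{w}\iota_w\circ\pr_w$ and the fact that all $\binom{n}{a}$ summands $\pi_n\circ\iota_w\circ\pr_w\circ\pi_n$ coincide with $\pi_n\circ\iota_{w_0}\circ\pr_{w_0}\circ\pi_n$; this is exactly the ``averaging over the orbit'' you describe, and $\QQ$-linearity makes the scalars invertible. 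Naturality is automatic since every map is built from coherence constraints and the canonical projectors. So your proposal supplies, correctly, the proof that the paper outsources to the reference.
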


\subsection{Motives of symmetric products of curves}

In \cite{Bano1}, S. del Ba\~no proved the following motivic decomposition.

\begin{prop}\cite{Bano1}\label{prop:bano-ck}
Let $C_k$ be the $k$-th symmetric power of $C.$ Then there is the following decomposition.
$$ h(C_k) = \sum_{a+b+c=k} 1^{\otimes a} \otimes \lambda^b h^1(C) \otimes \LL^{\otimes c} = \sum_{b+c\leq k} \lambda^b h^1(C) \otimes \LL^{\otimes c}$$
where $a,b,c$ are nonnegative integers.
\end{prop}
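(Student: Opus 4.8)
The plan is to reduce everything to the single identity $h(C_k) = \lambda^k(h(C))$ in $\Chow^{\eff}$, and then to expand the right-hand side using the $\lambda$-structure axioms together with the standard decomposition $h(C) = 1 \oplus h^1(C) \oplus \LL$ of the motive of a smooth projective curve. First I would establish $h(C_k) = \lambda^k(h(C))$. Since $C$ is a smooth projective curve, $C_k = C^k/S_k$ is again smooth and projective, so $h(C_k)$ is defined. Because $h$ is a tensor functor on smooth projective varieties, Künneth gives $h(C^k) = h(C)^{\otimes k}$, and under this identification the correspondence $\Gamma_\sigma$ induced by a permutation $\sigma \in S_k$ becomes the operator $\phi(\sigma)$ on $h(C)^{\otimes k}$. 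Working with $\QQ$-coefficients, the motive of the quotient by the finite group $S_k$ is the summand of $h(C^k)$ cut out by the averaging projector $\frac{1}{k!}\sum_{\sigma \in S_k}\Gamma_\sigma$; this follows from the quotient morphism $\pi \colon C^k \to C_k$ via the relations $\pi_* \circ \pi^* = k! \cdot \mathrm{id}$ and $\pi^* \circ \pi_* = \sum_{\sigma \in S_k}\Gamma_\sigma$. Hence $h(C_k) = (h(C)^{\otimes k},\ \frac{1}{k!}\sum_{\sigma}\phi(\sigma)) = \lambda^k(h(C))$ by the definition of $\lambda^k$.

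Next I would expand $\lambda^k(1 \oplus h^1(C) \oplus \LL)$ using axiom (3) of the $\lambda$-structure, which gives
$$ \lambda^k(h(C)) = \bigoplus_{a+b+c=k} \lambda^a(1) \otimes \lambda^b(h^1(C)) \otimes \lambda^c(\LL). $$
It then remains to evaluate the two outer factors. For the unit, $1^{\otimes a} = 1$ and every $\phi(\sigma)$ acts as the identity, so the symmetrizer is the identity and $\lambda^a(1) = 1$. For the Lefschetz motive, $\LL$ is an even $\otimes$-invertible object, so the commutativity constraints on $\LL^{\otimes c}$ carry no sign, $\phi(\sigma)$ is again the identity, and $\lambda^c(\LL) = \LL^{\otimes c}$. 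Substituting these gives the first displayed equality $h(C_k) = \sum_{a+b+c=k} 1^{\otimes a} \otimes \lambda^b h^1(C) \otimes \LL^{\otimes c}$. Finally, for each pair $(b,c)$ with $b+c \le k$ there is a unique $a = k-b-c \ge 0$, and tensoring with $1^{\otimes a} = 1$ changes nothing, so the sum collapses to the second form $\sum_{b+c\le k} \lambda^b h^1(C) \otimes \LL^{\otimes c}$.

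The main obstacle is the first step: justifying that the motive of the geometric quotient $C^k/S_k$ is precisely the summand of $h(C^k)$ cut out by the symmetrizer, and that the geometric $S_k$-action is carried to the categorical operators $\phi(\sigma)$, including the Koszul signs implicit in $\phi(\sigma)$ on the odd factor $h^1(C)$. Once this identification is in place, the rest is a formal consequence of the $\lambda$-structure and the known decomposition of $h(C)$. In particular, the proposition never requires unpacking $\lambda^b h^1(C)$ itself, so no sign analysis is needed beyond the elementary observation that $1$ and $\LL$ are even and hence fixed by their symmetrizers.
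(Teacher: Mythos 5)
Your proof is correct: identifying $h(C_k)$ with the image of the symmetrizer on $h(C)^{\otimes k}$ via $\pi_*\circ\pi^* = k!\cdot\mathrm{id}$ and $\pi^*\circ\pi_* = \sum_{\sigma}\Gamma_\sigma$, then expanding $\lambda^k(1\oplus h^1(C)\oplus\LL)$ by the $\lambda$-structure axiom and evaluating $\lambda^a(1)=1$, $\lambda^c(\LL)=\LL^{\otimes c}$, is exactly the standard argument; the paper itself gives no proof (it quotes the result from \cite{Bano1}), and your route is essentially del Ba\~no's original one. One small clarification: in $\Chow^{\eff}$ the commutativity constraint carries no signs, so $\phi(\sigma)$ is literally the permutation correspondence $\Gamma_\sigma$ and there is no Koszul-sign compatibility to check on the $h^1(C)$ factor — the ``oddness'' of $h^1(C)$ surfaces only after realization (which is why $\lambda^b h^1(C)$ realizes to an exterior power), so, as you note, nothing is needed beyond the evenness of $1$ and $\LL$.
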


In order to discuss motives of symmetric products of a curve and motive of its Jacobian, we recall the following definition.

\begin{defi} Let $C$ be a smooth projective curve of genus $g.$ Then we define $(1+t)^{h^1(C)}$ as follows.
$$ (1+t)^{h^1(C)} := \sum_{a=0}^{2g} \lambda^a h^1(C)  t^{ a} $$
\end{defi}

In particular, we define $(1+\LL^{\otimes m})^{h^1(C)}$ as follows.
\begin{rema}
$$ (1+\LL^{\otimes m})^{h^1(C)} := \sum_{a=0}^{2g} \lambda^a h^1(C)  \LL^{\otimes ma} $$
\end{rema}

Using the above definition we can express the motive of Jacobian.

\begin{rema}
  \label{rmk:bano-jacobian}
  \cite{Bano;thesis}
The motive of Jacobian of $C$ is as follows.
$$ h(J(C)) = (1+1)^{h^1(C)} = \sum_{a=0}^{2g} \lambda^a h^1(C)$$
\end{rema}

From the Poincar\'e duality we have the following isomorphisms.

\begin{prop}\label{Duality of Jacobian}
  We have the following isomorphism for $\delta$ integer
  with $0 \leq \delta\leq g$
  $$
  \lambda^{g+\delta} h^1(C)
  \cong
  \lambda^{g-\delta} h^1(C) \otimes \LL^{ \delta}
  $$
 and hence
 \begin{equation}
 \lambda^{g+\delta} h^1(C) \otimes \LL^{ c}
 \cong
 \lambda^{g-\delta} h^1(C) \otimes \LL^{ \delta+c}
 \end{equation}
\end{prop}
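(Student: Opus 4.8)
The plan is to prove $\lambda^{g+\delta} h^1(C) \cong \lambda^{g-\delta} h^1(C) \otimes \LL^{\delta}$ by invoking Poincaré duality for the motive $h^1(C)$, which is what the statement itself announces. The key input is that $h^1(C)$ is a motive of weight $1$ and rank $2g$ carrying a principal polarization (equivalently, the cup-product pairing $h^1(C) \otimes h^1(C) \to \LL$ coming from the diagonal on the curve). This pairing furnishes an isomorphism $h^1(C)^\vee \cong h^1(C) \otimes \LL^{-1}$, i.e. $h^1(C)$ is self-dual up to the Tate twist $\LL$. This is the single structural fact I would isolate first and cite from del Baño (\cite{Bano1, Bano;thesis}).

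Let me think about what needs to happen. First I would record how $\lambda^n$ interacts with duality: for a motive $M$ of rank $2g$ with top exterior power $\lambda^{2g} M$, there is a natural pairing $\lambda^{b} M \otimes \lambda^{2g-b} M \to \lambda^{2g} M$ which is perfect, giving
\begin{equation}
\lambda^{b} M \cong \left(\lambda^{2g-b} M\right)^\vee \otimes \lambda^{2g} M.
\end{equation}
This is the motivic analogue of the classical fact $\bigwedge^b V \cong (\bigwedge^{n-b} V)^\vee \otimes \bigwedge^n V$ for a vector space of dimension $n$, and in the $\lambda$-ring framework it follows formally from the splitting principle together with axiom (3) of the $\lambda$-structure. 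Next I would compute the two scalar ingredients on the right: $\lambda^{2g} h^1(C) \cong \LL^{g}$ (the determinant of $h^1(C)$ is the Tate motive $\LL^g$, which for a curve is the statement that $\det H^1 = \LL^{\otimes g}$), and $\left(\lambda^{m} h^1(C)\right)^\vee \cong \lambda^{m} h^1(C) \otimes \LL^{-m}$, obtained by applying $\lambda^m$ to the self-duality $h^1(C)^\vee \cong h^1(C) \otimes \LL^{-1}$ and using that $\lambda^m(X \otimes \LL^{-1}) \cong \lambda^m(X) \otimes \LL^{-m}$ (a Tate line is rank one, so the $\lambda$-operation twists by $\LL^{-m}$).

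Assembling these with $b = g+\delta$, so that $2g-b = g-\delta$, gives
\begin{equation}
\lambda^{g+\delta} h^1(C) \cong \left(\lambda^{g-\delta} h^1(C)\right)^\vee \otimes \LL^{g} \cong \lambda^{g-\delta} h^1(C) \otimes \LL^{-(g-\delta)} \otimes \LL^{g} = \lambda^{g-\delta} h^1(C) \otimes \LL^{\delta},
\end{equation}
which is the first claimed isomorphism; tensoring both sides with $\LL^{c}$ yields the displayed consequence immediately. The main obstacle I expect is not the final bookkeeping but justifying the two structural facts cleanly inside $\Chow^{\eff}$: namely that the cup-product pairing on $h^1(C)$ is perfect (self-duality up to $\LL$) and that $\lambda^{2g} h^1(C) \cong \LL^g$. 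Both are standard for the motive of a curve and are established in del Baño's work, so I would cite them rather than reprove them; the one point demanding care is that all the duality manipulations take place in $\Chow^{\eff}$ (effective motives), where $\LL^{-1}$ is not available, so I would arrange the computation to keep every object effective — in particular by clearing the negative twist $\LL^{-(g-\delta)}$ against $\LL^g$ before asserting the final effective isomorphism, as done above.
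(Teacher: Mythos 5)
Your proof is correct, but it follows a genuinely different route from the paper's. The paper argues through the Jacobian: it writes Poincar\'e duality as $h(J(C)) \cong h(J(C))^\vee \otimes \LL^{g}$, expands both sides via the canonical decomposition $h(J(C)) = \bigoplus_{a=0}^{2g} \lambda^a h^1(C)$ (Remark \ref{rmk:bano-jacobian}), and then matches summands, invoking the uniqueness of the canonical decomposition of the motive of an abelian variety \cite{Scholl} to legitimize the matching; the self-duality $h^1(C) \cong h^1(C)^\vee \otimes \LL$ coming from Poincar\'e duality on $C$ then converts the matched dual summand into the asserted Tate twist, exactly as in your final assembly. You instead work entirely at the level of $h^1(C)$ and its $\lambda$-powers, combining the perfect pairing $\lambda^{b}h^1(C) \otimes \lambda^{2g-b}h^1(C) \to \lambda^{2g}h^1(C) \cong \LL^{g}$ with the self-duality of $h^1(C)$. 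What your route buys: you never have to identify summands inside an abstract isomorphism of direct sums (the delicate step the paper outsources to Scholl). What it costs: the perfectness of the exterior pairing and the identity $\lambda^{2g}h^1(C) \cong \LL^{g}$ are substantive theorems about the motive of a curve --- in essence equivalent to Poincar\'e duality for $J(C)$, i.e.\ to the very input the paper uses --- and \emph{not} formal consequences of the $\lambda$-structure: your parenthetical claim that perfectness ``follows formally from the splitting principle together with axiom (3)'' is not a valid justification, since there is no splitting principle in $\Chow^{\eff}$ and a $\lambda$-structure by itself produces no pairings at all. Because you ultimately say you would cite these two facts from del Ba\~no \cite{Bano1} --- where they are indeed established --- rather than derive them formally, the proof stands; just delete the splitting-principle remark. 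Your handling of effectivity (keeping negative twists only in intermediate steps and clearing $\LL^{-(g-\delta)}$ against $\LL^{g}$ before asserting the final isomorphism) is correct, and parallels what the paper itself does implicitly when it invokes duals, which do not exist in $\Chow^{\eff}$.
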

\begin{proof}
From Poincar\'e duality we have the following isomorphism.
$$ h_c(C) \cong h(C)^\vee \otimes \LL $$
Because $C$ is proper we have the following isomorphism.
$$ h^1(C) \cong h^1(C)^\vee \otimes \LL $$
Again, we have the following isomorphism due to Poincar\'e duality.
$$ h_c(J(C)) \cong h(J(C))^\vee \otimes \LL^{ g} $$
Because $J(C)$ is proper we have the following isomorphism.
$$ h(J(C)) \cong \sum_{a=0}^{2g} \lambda^a h^1(C) \cong h(J(C))^\vee \otimes \LL^{ g} \cong (\sum_{a=0}^{2g} \lambda^a h^1(C))^\vee \otimes \LL^{ g} $$
It is well-known that the motive of Abelian variety can be decompose into summands, we can compare each summand of this isomorphism. (For the uniqueness of the canonical decomposition of the motive of an abelian variety, see \cite{Scholl} and references therein.) Therefore we obtain the desired result. The last formula is obtained by tensoring with $\LL^{\otimes c}$.
\end{proof}

\subsection{Motivic decompositions of motives of symmetric products of curves and zeta functions}

Now we want to compare the motives of symmetric products and the Jacobian of a curve. In the level of derived categories of coherent sheaves, Toda obtain the following semiorthogonal decompositions in \cite{Toda}.

\begin{theo}\cite{Toda}
Let $g \leq k \leq 2g-2.$ Then $D(C_k)$ has following semiorthogonal decomposition
$$ D(C_k) = \langle D(J(C)), \cdots, D(J(C)), D(C_{2g-2-k}) \rangle $$
where there are $k-g+1$ copies of $D(J(C)).$
\end{theo}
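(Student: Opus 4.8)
The plan is to realize $C_k$ as a projectivization over the Jacobian and to read off the semiorthogonal decomposition from the degeneration structure of the Abel--Jacobi morphism. Fix a Poincar\'e line bundle $\cP$ on $C \times \Pic^k(C)$ and let $\pi \colon C \times \Pic^k(C) \to J$ be the projection onto $J := \Pic^k(C) \cong J(C)$. Since $\pi$ has one--dimensional fibres, $R\pi_*\cP$ is represented by a two--term complex of vector bundles $[\cE^0 \xrightarrow{\phi} \cE^1]$ in degrees $0$ and $1$, chosen (via cohomology and base change) so that for every $L$ of degree $k$ one has $H^0(C,L)=\ker\phi_L$ and $H^1(C,L)=\mathrm{coker}\,\phi_L$, whence $\rank\cE^0-\rank\cE^1 = \chi(L) = k-g+1$. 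Now $C_k$ parametrizes one--dimensional subspaces of the spaces $H^0(C,L)$, so $C_k$ sits inside $\PP(\cE^0)$ as the (derived) zero locus of the section of $\cO_{\PP(\cE^0)}(1)\otimes\pi^*\cE^1$ induced by $\phi$: a line $\ell\subset\cE^0_L$ lies on $C_k$ exactly when $\phi(\ell)=0$. First I would carefully verify this identification of $C_k$ with the derived projectivization of $R\pi_*\cP$ over $J$.

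The Abel--Jacobi morphism $a_k\colon C_k\to J$ is then the structure map of this projectivization. Over the open locus $U\subset J$ where $h^1(L)=0$, i.e.\ where $\phi$ is surjective, $a_k$ is a genuine $\PP^{k-g}$--bundle, so Orlov's projective bundle theorem accounts for the $k-g+1$ copies of $D(J)$. The complement $J\setminus U=\{L:h^1(L)>0\}$ is precisely where the fibre dimension jumps, and by Serre duality $h^1(L)=h^0(K_C\otimes L^{-1})$ with $\deg(K_C\otimes L^{-1})=2g-2-k$; thus $J\setminus U$ is the Brill--Noether locus that is the image of the Abel--Jacobi map in degree $2g-2-k$. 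The key step is to apply the semiorthogonal decomposition for the projectivization of the complex $R\pi_*\cP$ (Orlov in the locally free case, the projectivization--of--a--complex formula in general) to obtain $D(C_k)=\langle D(J),\dots,D(J),\cR\rangle$ with exactly $k-g+1=\rank\cE^0-\rank\cE^1$ copies of $D(J)$ coming from the generic projective bundle, together with a residual category $\cR$ controlled by the degeneracy of $\phi$.

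It then remains to identify $\cR$ with $D(C_{2g-2-k})$, and here I would exploit relative Serre duality along $\pi$. Up to a shift and a Tate twist by a power of $\LL$, the dual complex $(R\pi_*\cP)^\vee$ is the analogous two--term complex computing $R\pi'_*\cP'$ for a Poincar\'e bundle $\cP'$ of degree $2g-2-k$, under the isomorphism $J=\Pic^k(C)\cong\Pic^{2g-2-k}(C)$ given by $L\mapsto K_C\otimes L^{-1}$. The residual term produced by the degeneracy of $\phi$ is exactly the dual projectivization of this complex, whose total space is $C_{2g-2-k}$; matching the two descriptions and keeping track of the twists by $\LL$ identifies $\cR\simeq D(C_{2g-2-k})$.

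The hard part will be twofold. First, one must show that $C_k$ genuinely coincides with the derived projectivization of $R\pi_*\cP$: this is automatic when the section cutting out $C_k\subset\PP(\cE^0)$ is regular, i.e.\ when the Brill--Noether jumping locus has its expected codimension, but for special curves one is forced to work with the derived (possibly non--reduced) zero locus and to check that the projectivization formula still applies. Second, and more delicate, is pinning down $\cR$ on the nose as $D(C_{2g-2-k})$ rather than merely as some category supported on the jumping locus, which requires the Serre--duality comparison above to be executed compatibly with the Tate--twist bookkeeping. As consistency checks: when $k\ge 2g-1$ one has $\cE^1=0$, so $a_k$ is an honest projective bundle and $\cR$ is empty; and when $k=2g-2$ one gets $2g-2-k=0$ with $C_0=\Spec\KK$, recovering $g-1$ copies of $D(J)$ together with $D(\mathrm{pt})$.
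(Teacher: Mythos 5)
This theorem is not proved in the paper at all: it is quoted verbatim from Toda, whose own argument is genuinely different from yours --- he obtains the decomposition from wall-crossing (d-critical flips) of stable pair moduli spaces, not from a projectivization formula. What you describe is instead the derived-projectivization route (due to Jiang and Leung): realize $C_k$ as the projectivization of the two-term complex $R\pi_*\cP=[\cE^0\to\cE^1]$ over $J=\Pic^k(C)$, get $k-g+1$ copies of $D(J(C))$ from the generic $\PP^{k-g}$-bundle structure, and identify the residual category with the dual projectivization, which relative Serre duality matches with $C_{2g-2-k}$ via $L\mapsto K_C\otimes L^{-1}$. That is a correct, known alternative proof of Toda's theorem, and it is in fact the categorification of what this paper actually does prove: the motivic analogues in Proposition \ref{prop:symmpro} and Proposition \ref{prop:symmpro in Grothendieck ring} use exactly your geometry --- the two Abel--Jacobi fibrations $p\colon C_k\to J(C)$ and $q\colon C_{2g-2-k}\to J(C)$ of diagram \eqref{eq:dualsym}, with fibres $\PP(H^0(L))$ and $\PP(H^1(L)^\vee)$, stratified by the Brill--Noether loci $B_i=\{L: h^0(L)=i\}$ and handled stratum by stratum with the projective bundle formula. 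The difference is that in $K_0(\bVar)$ the strata-wise identities simply add up, whereas your categorical statement requires the global projectivization machinery to glue the strata; that machinery is precisely what your route buys.

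Two corrections to your assessment of the ``hard parts.'' First, the regularity of the section cutting out $C_k\subset\PP(\cE^0)$ is automatic for \emph{every} smooth curve, not only when the Brill--Noether jumping loci have expected codimension: set-theoretically the zero locus consists exactly of pairs $(L,\ell)$ with $\ell\subset\ker\phi_L=H^0(L)$, i.e.\ it equals $C_k$, which is irreducible of dimension $k$ --- the expected dimension $g+(\mathrm{rank}\,\cE^0-1)-\mathrm{rank}\,\cE^1=k$ --- inside the smooth variety $\PP(\cE^0)$; hence the Koszul complex is exact and no non-reduced derived structure ever appears. The same dimension count on the dual side (where $C_{2g-2-k}$ has dimension $2g-2-k$, again expected) is what makes the hypotheses of the projectivization theorem hold unconditionally, so no genericity assumption on $C$ is needed. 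Second, the step you correctly flag as delicate --- pinning down the residual category $\cR$ as $D(C_{2g-2-k})$ on the nose rather than as some category supported on the jumping locus --- is not a routine Serre-duality bookkeeping exercise: it is the substantive content of the projectivization theorem for two-term complexes. If you invoke that theorem your argument is complete; if you intend to prove it from scratch, that is where essentially all of the real work lies, and your sketch currently assumes it.
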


Being motivated by the above decomposition and Orlov's conjecture we obtain the following isomorphism.

\begin{prop}\label{prop:symmpro}
(1) Let $g \leq k \leq 2g-2.$ Then we have the following isomorphism
in $\Chow^{\eff}$.
\begin{equation}
   \label{eq:symmpro}
   h(C_k) \cong
   h(J(C)) \otimes (\bigoplus_{l=0}^{k-g} \LL^{\otimes l})
   + h(C_{2g-2-k}) \otimes \LL^{\otimes k-g+1}  
\end{equation}
(2) Let $k \geq 2g-1.$ Then we have the following isomorphism in $\Chow^{\eff}$.
$$ h(C_k) \cong h(J(C)) \otimes (\bigoplus_{l=0}^{k-g} \LL^{\otimes l}) $$
\end{prop}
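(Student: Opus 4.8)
The plan is to prove both parts by directly computing the right-hand sides using the explicit decomposition of $h(C_k)$ from Proposition \ref{prop:bano-ck}, namely $h(C_k) = \sum_{b+c \leq k} \lambda^b h^1(C) \otimes \LL^{\otimes c}$, together with the duality isomorphism $\lambda^{g+\delta} h^1(C) \cong \lambda^{g-\delta} h^1(C) \otimes \LL^{\otimes \delta}$ from Proposition \ref{Duality of Jacobian} and the fact that $\lambda^a h^1(C) = 0$ for $a > 2g$ (since $h^1(C)$ has rank $2g$, its exterior powers vanish above degree $2g$). The key observation is that all three motives in each claimed identity are finite direct sums of twisted pieces $\lambda^a h^1(C) \otimes \LL^{\otimes c}$, so it suffices to match these building blocks on both sides. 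I would first fix notation: write the summand index of $h(C_k)$ as pairs $(b,c)$ with $b + c \leq k$, $0 \leq b \leq 2g$, $c \geq 0$, and note that $h(J(C)) = \sum_{a=0}^{2g} \lambda^a h^1(C)$ by Remark \ref{rmk:bano-jacobian}.

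For part (2), the case $k \geq 2g-1$, I would start from $h(C_k) = \sum_{b+c \leq k} \lambda^b h^1(C) \otimes \LL^{\otimes c}$. Since $\lambda^b h^1(C)$ vanishes for $b > 2g$, the constraint $b + c \leq k$ is automatically satisfied for every $b \in \{0,\dots,2g\}$ as long as $c \leq k - b$; because $k \geq 2g - 1$ the admissible range of $c$ for each fixed $b$ runs from $0$ up to $k-b$. The claim is that regrouping by the twist gives exactly $h(J(C)) \otimes \bigoplus_{l=0}^{k-g} \LL^{\otimes l}$. The main bookkeeping point is that the duality isomorphism converts high-exterior-power terms $\lambda^{g+\delta} h^1(C) \otimes \LL^{\otimes c}$ into low ones $\lambda^{g-\delta} h^1(C) \otimes \LL^{\otimes (\delta+c)}$, and after applying this to every summand with $b > g$ one finds that each $\lambda^a h^1(C)$ with $0 \leq a \leq 2g$ appears tensored with precisely the twists $\LL^{\otimes 0}, \dots, \LL^{\otimes(k-g)}$, which is the assertion. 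I would carry out this regrouping carefully, tracking the multiplicity of each twist $\LL^{\otimes l}$ attached to each $\lambda^a h^1(C)$ and checking it equals $1$ for $0 \leq l \leq k-g$.

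For part (1), the range $g \leq k \leq 2g-2$, I would again expand $h(C_k) = \sum_{b+c \leq k} \lambda^b h^1(C) \otimes \LL^{\otimes c}$, but now the constraint $b + c \leq k$ genuinely cuts off some terms because $k < 2g-1$, so not all of $h(J(C)) \otimes \bigoplus_{l=0}^{k-g} \LL^{\otimes l}$ is present and a correction term appears. I would split the sum according to whether $b \leq g$ or $b > g$, apply Proposition \ref{Duality of Jacobian} to the latter to rewrite $\lambda^b h^1(C)$ with $b = g+\delta$ as $\lambda^{g-\delta} h^1(C) \otimes \LL^{\otimes \delta}$, and then collect terms by exterior-power index and twist. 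The expectation is that the ``full Jacobian'' part $h(J(C)) \otimes \bigoplus_{l=0}^{k-g} \LL^{\otimes l}$ accounts for most summands, and the leftover — precisely the summands that would require $b + c > k$ and are therefore missing — reassembles into $h(C_{2g-2-k}) \otimes \LL^{\otimes(k-g+1)}$ after once more using the expansion of $h(C_{2g-2-k})$ from Proposition \ref{prop:bano-ck} and the duality relation.

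The main obstacle will be the combinatorial bookkeeping in part (1): one must show that the deficit between the truncated sum defining $h(C_k)$ and the untruncated Jacobian sum is exactly $h(C_{2g-2-k}) \otimes \LL^{\otimes(k-g+1)}$, and this requires carefully identifying the range of $(b,c)$ pairs that are excluded by $b+c \leq k$, applying duality to flip high exterior powers into low ones, and recognizing the resulting collection of $\lambda^\beta h^1(C) \otimes \LL^{\otimes \gamma}$ terms as precisely the del Ba\~no expansion of $h(C_{2g-2-k})$ shifted by $\LL^{\otimes(k-g+1)}$. I would verify this by matching each summand $\lambda^\beta h^1(C) \otimes \LL^{\otimes \gamma}$ (with $\beta + \gamma \leq 2g-2-k$ contributing to $h(C_{2g-2-k})$, hence $\beta + (\gamma + k-g+1)$ in the shifted term) against the excluded indices, using $\lambda^a h^1(C) = 0$ for $a > 2g$ and the self-duality $\lambda^g h^1(C) \cong \lambda^g h^1(C)$ as needed. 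Since everything reduces to an identity among finitely many explicit summands in the semisimple-style additive category $\Chow^{\eff}$, no deeper input beyond the stated propositions is required; the difficulty is purely in organizing the index sets correctly.
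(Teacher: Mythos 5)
Your proposal is correct in substance, and for part (1) it is essentially the paper's own argument: the paper likewise expands both sides into summands $\lambda^b h^1(C) \otimes \LL^{\otimes c}$ using Proposition \ref{prop:bano-ck}, Remark \ref{rmk:bano-jacobian} and Proposition \ref{Duality of Jacobian}, and then matches index sets (the paper organizes this bookkeeping visually, plotting each summand as a lattice point $(b,c)$ and showing that the triangle of duplicated points produced by $h(J(C))\otimes\bigoplus_{l=0}^{k-g}\LL^{\otimes l}$ maps under Poincar\'e duality exactly onto the points missing from the triangle $b+c\leq k$). Where you genuinely diverge is part (2): the paper disposes of the case $k \geq 2g-1$ in one line, using the geometric fact that $C_k$ is a projective bundle over $J(C)$ in that range, whereas you propose to run the same combinatorial matching again. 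Your route does work and has the merit of needing no input beyond del Ba\~no's decomposition and duality, but it is more laborious, and one step of your description is not literally correct as stated: if you apply duality to \emph{every} summand with $b>g$, reducing everything to exterior powers $\leq g$, then the multiplicities of the resulting summands $\lambda^{b'}h^1(C)\otimes\LL^{\otimes c'}$ are \emph{not} all equal to $1$ (they equal $2$ in the region $g-b' \leq c' \leq k-g$, on both sides). The clean matching instead dualizes only the mismatched terms: the excess summands of $h(C_k)$ (those with $b<g$ and $k-g < c \leq k-b$) correspond under duality precisely to the summands of $h(J(C))\otimes\bigoplus_{l=0}^{k-g}\LL^{\otimes l}$ with $a>g$ and $k-a < l \leq k-g$ that are absent from the literal expansion of $h(C_k)$. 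This is a repairable slip in bookkeeping rather than a gap, but it is exactly the kind of detail your "careful regrouping" would need to get right.
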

\begin{proof}
The second part is a direct consequence of the fact that $C_k$ is a projective bundle over $J(C)$ when $k \geq 2g-1.$ \\

Now let us prove the first part. We will first show that both sides of \eqref{eq:symmpro} decompose into direct sums of the form $\lambda^b h^1(C) \otimes \LL^{\otimes c}$, so the proof will be a combinatorial argument to identify the summands on both sides. \\

Let $g \leq k \leq 2g-2.$ From del Ba\~no's description (Proposition \ref{prop:bano-ck}) we have the following isomorphism. 
\begin{equation}
  \label{eq:ck}
h(C_{k}) = \bigoplus_{b+c \leq k}  \lambda^b h^1(C) \otimes \LL^{ c}
\end{equation}
Combining the formula for $h(J(C))$ (Remark \ref{rmk:bano-jacobian}) with Poincar\'e duality (Proposition \ref{Duality of Jacobian}), we see that
$$
h(J(C)) = \bigoplus_{a=0}^{g} \lambda^a h^1(C) \oplus
\bigoplus_{a=g+1}^{2g} \lambda^{2g-a} h^1(C) \otimes \LL^{\otimes a-g}
$$

If we plot a dot $(b,c)$ in the plane $\NN^2$ for each summand of the
form $\lambda^bh^1(C) \otimes \LL^{\otimes c}$, we obtain:

\begin{tikzpicture}[y=.5cm, x=.5cm,font=\sffamily]
\draw (0,0) -- coordinate (x axis mid) (11,0);
\draw (0,0) -- coordinate (y axis mid) (0,11);
\foreach \x in {0,...,11}
	\draw (\x,1pt) -- (\x,-3pt);
\draw (10,-4pt) node[anchor=north] {$g$};
\foreach \y in {0,...,11}
\draw (1pt,\y) -- (-3pt,\y) ;
\draw (-4pt, 10) node[anchor=east] {$g$};

\draw plot[mark=*, mark size=3pt] coordinates {
(0,0) (1,0) (2,0) (3,0) (4,0) (5,0) (6,0) (7,0) (8,0) (9,0) (10,0)
(9,1) (8,2) (7,3) (6,4) (5,5) (4,6) (3,7) (2,8) (1,9) (0,10)};
\node[] at (7,7) {$h(J(C))$};

\end{tikzpicture}

In the same way, the sum
$$
h(J(C)) \otimes  (\bigoplus_{l=0}^{k-g} \LL^{\otimes l})
$$
will produce the diagram:

\begin{tikzpicture}[y=.5cm, x=.5cm,font=\sffamily]
\draw (0,0) -- coordinate (x axis mid) (15,0);
\draw (0,0) -- coordinate (y axis mid) (0,14);
\foreach \x in {0,...,15}
\draw (\x,1pt) -- (\x,-3pt);
\draw (10,-4pt) node[anchor=north] {$g$};
\draw (0,-4pt) node[anchor=north] {$0$};

\foreach \y in {0,...,14}
\draw (1pt,\y) -- (-3pt,\y) ;
\draw (-4pt, 10) node[anchor=east] {$g$};
\draw (-4pt, 13) node[anchor=east] {$k$};
\draw (-4pt, 3) node[anchor=east] {$k-g$};
\draw (-4pt, 0) node[anchor=east] {$0$};

\node[] at (9,9) {$h(J(C)) \otimes (\bigoplus_{l=0}^{k-g} \LL^{\otimes l})$};

        \draw plot[mark=*, mark size=3pt] coordinates {
        (0,0)
          (1,0)
          (2,0)
          (3,0)
          (4,0)
          (5,0)
          (6,0)
          (7,0)
          (8,0)
          (9,0)
          (10,0)
          (6,4)
          (5,5)
          (4,6)
          (3,7)
          (2,8)
          (1,9)
          (0,10)};

        \draw plot[mark=*, mark size=3pt] coordinates {
        (0,1)
        (1,1)
        (2,1)
        (3,1)
        (4,1)
        (5,1)
        (6,1)
        (7,1)
        (8,1)
        (10,1)
        (7,4)
        (6,5)
        (5,6)
        (4,7)
        (3,8)
        (2,9)
        (1,10)
        (0,11)};

        \draw plot[mark=*, mark size=3pt] coordinates {
        (0,2)
        (1,2)
        (2,2)
        (3,2)
        (4,2)
        (5,2)
        (6,2)
        (7,2)
        (10,2)
        (8,4)
        (7,5)
        (6,6)
        (5,7)
        (4,8)
        (3,9)
        (2,10)
        (1,11)
        (0,12)};

        \draw plot[mark=*, mark size=3pt] coordinates {
        (0,3)
        (1,3)
        (2,3)
        (3,3)
        (4,3)
        (5,3)
        (6,3)
        (10,3)
        (9,4)
        (8,5)
        (7,6)
        (6,7)
        (5,8)
        (4,9)
        (3,10)
        (2,11)
        (1,12)
        (0,13)};
        \draw plot[mark=square,mark size=3pt, only marks] coordinates {
          (7,3) (8,2) (8,3) (9,1) (9,2) (9,3)};
\end{tikzpicture}

Note that some of the points (drawn with empty squares) are duplicated. By formula \eqref{eq:ck}, the diagram corresponding to the motive $h(C_{2g-2-k})$ of the symmetric product of a curve is just the triangle defined by both axis and the line $b+c=2g-2-k$. Therefore, adding the last term, we obtain the diagram of the right hand side of \eqref{eq:symmpro}:

\begin{tikzpicture}[y=.5cm, x=.5cm,font=\sffamily]
\draw (0,0) -- coordinate (x axis mid) (15,0);
\draw (0,0) -- coordinate (y axis mid) (0,14);
\foreach \x in {0,...,15}
\draw (\x,1pt) -- (\x,-3pt);
\draw (10,-4pt) node[anchor=north] {$g$};
\draw (0,-4pt) node[anchor=north] {$0$};

\foreach \y in {0,...,14}
\draw (1pt,\y) -- (-3pt,\y) ;
\draw (-4pt, 10) node[anchor=east] {$g$};
\draw (-4pt, 13) node[anchor=east] {$k$};
\draw (-4pt, 3) node[anchor=east] {$k-g$};
\draw (-4pt, 0) node[anchor=east] {$0$};

\node[] at (10,9) {
  $\begin{array}{c}
    J(C) \otimes (\bigoplus_{l=0}^{k-g} \otimes \LL^{\otimes l}) \\
    \oplus \\
    h(C_{2g-2-k}) \otimes \LL^{ k-g+1}
  \end{array}$
};

        \draw plot[mark=*, mark size=3pt] coordinates {
        (0,0)
          (1,0)
          (2,0)
          (3,0)
          (4,0)
          (5,0)
          (6,0)
          (7,0)
          (8,0)
          (9,0)
          (10,0)
          (6,4)
          (5,5)
          (4,6)
          (3,7)
          (2,8)
          (1,9)
          (0,10)};

        \draw plot[mark=*, mark size=3pt] coordinates {
        (0,1)
        (1,1)
        (2,1)
        (3,1)
        (4,1)
        (5,1)
        (6,1)
        (7,1)
        (8,1)
        (10,1)
        (7,4)
        (6,5)
        (5,6)
        (4,7)
        (3,8)
        (2,9)
        (1,10)
        (0,11)};

        \draw plot[mark=*, mark size=3pt] coordinates {
        (0,2)
        (1,2)
        (2,2)
        (3,2)
        (4,2)
        (5,2)
        (6,2)
        (7,2)
        (10,2)
        (8,4)
        (7,5)
        (6,6)
        (5,7)
        (4,8)
        (3,9)
        (2,10)
        (1,11)
        (0,12)};

        \draw plot[mark=*, mark size=3pt] coordinates {
        (0,3)
        (1,3)
        (2,3)
        (3,3)
        (4,3)
        (5,3)
        (6,3)
        (10,3)
        (9,4)
        (8,5)
        (7,6)
        (6,7)
        (5,8)
        (4,9)
        (3,10)
        (2,11)
        (1,12)
        (0,13)};

        \draw plot[mark=square,mark size=3pt, only marks] coordinates {
(7,3) (8,2) (8,3) (9,1) (9,2) (9,3)};

        \draw plot[mark=*, mark size=3pt, only marks] coordinates {
          (0,4) (1,4) (2,4) (3,4) (4,4) (5,4)
          (0,5) (1,5) (2,5) (3,5) (4,5)
          (0,6) (1,6) (2,6) (3,6)
          (0,7) (1,7) (2,7) 
          (0,8) (1,8) 
          (0,9) 
        };
        
\end{tikzpicture}

Note that the duplicated points are exactly those in the isosceles triangle 
whose vertices have coordinates

$$
\xymatrix{
  {(2g-k,k-g)} \ar@{-}[r] \ar@{-}[rd]& {(g-1,k-g)} \ar@{-}[d]\\
  & {(g-1,1)}
}
$$

and using Poincar\'e duality, these points are mapped to the triangle

$$
\xymatrix{
  {(g+1,k-g-1)} \ar@{-}[d] \ar@{-}[rd]\\
   {(g+1,0)} \ar@{-}[r]  & {(k,0)}
}
$$

Therefore, the previous diagram becomes:

\begin{tikzpicture}[y=.5cm, x=.5cm,font=\sffamily]
\draw (0,0) -- coordinate (x axis mid) (15,0);
\draw (0,0) -- coordinate (y axis mid) (0,14);
\foreach \x in {0,...,15}
\draw (\x,1pt) -- (\x,-3pt);
\draw (0,-4pt) node[anchor=north] {$0$};
\draw (10,-4pt) node[anchor=north] {$g$};
\draw (13,-4pt) node[anchor=north] {$k$};

\foreach \y in {0,...,14}
\draw (1pt,\y) -- (-3pt,\y) ;
\draw (-4pt, 10) node[anchor=east] {$g$};
\draw (-4pt, 13) node[anchor=east] {$k$};
\draw (-4pt, 3) node[anchor=east] {$k-g$};
\draw (-4pt, 0) node[anchor=east] {$0$};

\node[] at (10,9) {
  $\begin{array}{c}
    J(C) \otimes (\bigoplus_{l=0}^{k-g} \LL^{\otimes l})\\
    \oplus \\
    h(C_{2g-2-k}) \otimes \LL^{\otimes k-g+1}
  \end{array}$
};

        \draw plot[mark=*, mark size=3pt] coordinates {
        (0,0)
          (1,0)
          (2,0)
          (3,0)
          (4,0)
          (5,0)
          (6,0)
          (7,0)
          (8,0)
          (9,0)
          (10,0)
          (9,1)
          (8,2)
          (7,3)
          (6,4)
          (5,5)
          (4,6)
          (3,7)
          (2,8)
          (1,9)
          (0,10)};

        \draw plot[mark=*, mark size=3pt] coordinates {
        (0,1)
        (1,1)
        (2,1)
        (3,1)
        (4,1)
        (5,1)
        (6,1)
        (7,1)
        (8,1)
        (9,1)
        (10,1)
        (9,2)
        (8,3)
        (7,4)
        (6,5)
        (5,6)
        (4,7)
        (3,8)
        (2,9)
        (1,10)
        (0,11)};

        \draw plot[mark=*, mark size=3pt] coordinates {
        (0,2)
        (1,2)
        (2,2)
        (3,2)
        (4,2)
        (5,2)
        (6,2)
        (7,2)
        (8,2)
        (9,2)
        (10,2)
        (9,3)
        (8,4)
        (7,5)
        (6,6)
        (5,7)
        (4,8)
        (3,9)
        (2,10)
        (1,11)
        (0,12)};

        \draw plot[mark=*, mark size=3pt] coordinates {
        (0,3)
        (1,3)
        (2,3)
        (3,3)
        (4,3)
        (5,3)
        (6,3)
        (7,3)
        (8,3)
        (9,3)
        (10,3)
        (9,4)
        (8,5)
        (7,6)
        (6,7)
        (5,8)
        (4,9)
        (3,10)
        (2,11)
        (1,12)
        (0,13)};


        \draw plot[mark=*, mark size=3pt, only marks] coordinates {
          (0,4) (1,4) (2,4) (3,4) (4,4) (5,4)
          (0,5) (1,5) (2,5) (3,5) (4,5)
          (0,6) (1,6) (2,6) (3,6)
          (0,7) (1,7) (2,7) 
          (0,8) (1,8) 
          (0,9) 
        };

         \draw plot[mark=*, mark size=3pt, only marks] coordinates {
           (11,0) (12,0) (13,0) 
           (11,1) (12,1) 
           (11,2) 
         };
\end{tikzpicture}

This is the diagram corresponding to $h(C_k)$ (see \eqref{eq:ck}), finishing the proof.

\end{proof}

There is a similar decomposition in $K_0(\bVar).$

\begin{prop}\label{prop:symmpro in Grothendieck ring}
We have the following equations in $K_0(\bVar)$. \\
(1) If $g \leq k \leq 2g-2$, then
\begin{equation}
   \label{eq:symmpro in Grothendieck ring}
   [C_k] = 
   [J(C)]  (\sum_{l=0}^{k-g} \LL^{ l})
   + [C_{2g-2-k}]  \LL^{ k-g+1}  
\end{equation}
(2) If $k \geq 2g-1$, then
$$ [C_k] = [J(C)]  (\sum_{l=0}^{k-g} \LL^{ l}) $$
\end{prop}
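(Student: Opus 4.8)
The plan is to prove both parts by a single geometric device: the Abel--Jacobi map together with its Brill--Noether stratification, read off in $K_0(\bVar)$ through the cut-and-paste relations. Fix a Poincar\'e line bundle on $\Pic^k(C)\times C$ and let $u_k\colon C_k\to\Pic^k(C)$ send an effective divisor to its class; its fibre over $L$ is the complete linear system $\PP(H^0(L))=\PP^{h^0(L)-1}$. For part (2), when $k\geq 2g-1$ every $L\in\Pic^k(C)$ is nonspecial, so $h^0(L)=k-g+1$ is constant; by cohomology and base change the direct image of the Poincar\'e bundle is locally free of rank $k-g+1$, and $u_k$ is the Zariski-locally trivial $\PP^{k-g}$-bundle associated to it. Since $\Pic^k(C)\cong J(C)$ and a projective bundle attached to a vector bundle contributes the class of its fibre, this gives $[C_k]=[J(C)]\,(\sum_{l=0}^{k-g}\LL^l)$ directly in $K_0(\bVar)$.

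For part (1) I would stratify $\Pic^k(C)$ by the value of $h^0$. Riemann--Roch gives $h^0(L)=k-g+1+h^1(L)$ with $h^1(L)=h^0(K-L)$, so the locally closed strata $\{L:h^0(K-L)=s\}$ for $s\geq 0$ form a finite partition, and over each of them $u_k$ is again a Zariski-locally trivial projective bundle, now with fibre $\PP^{k-g+s}$ (once more by cohomology and base change over the reduced stratum of constant $h^0$). Writing $E_s\subset\Pic^{2g-2-k}(C)$ for the locus $\{M:h^0(M)=s\}$, the isomorphism $L\mapsto K-L$ identifies $\Pic^k(C)\cong\Pic^{2g-2-k}(C)$ and carries $\{L:h^0(K-L)=s\}$ onto $E_s$. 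The cut-and-paste relations then yield
\[
[C_k]\;=\;[E_0]\Big(\sum_{l=0}^{k-g}\LL^l\Big)\;+\;\sum_{s\geq 1}[E_s]\Big(\sum_{l=0}^{k-g+s}\LL^l\Big),
\]
while the same stratification of the lower symmetric product (whose Abel--Jacobi image is exactly $\{M:h^0(M)\geq 1\}$, with fibre $\PP^{s-1}$ over $E_s$) gives $[C_{2g-2-k}]=\sum_{s\geq 1}[E_s]\,(\sum_{l=0}^{s-1}\LL^l)$.

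It then remains to compare this expression with the right-hand side of \eqref{eq:symmpro in Grothendieck ring}, using $[J(C)]=[\Pic^{2g-2-k}(C)]=[E_0]+\sum_{s\geq 1}[E_s]$. Matching the coefficient of each class $[E_s]$, the claim reduces to the elementary cell-decomposition identity $[\PP^{k-g}]+\LL^{\,k-g+1}[\PP^{s-1}]=[\PP^{k-g+s}]$ in $K_0(\bVar)$, that is $(\sum_{l=0}^{k-g}\LL^l)+\LL^{k-g+1}(\sum_{l=0}^{s-1}\LL^l)=\sum_{l=0}^{k-g+s}\LL^l$, which holds termwise for every $s$. A pleasant feature of this argument is that the Brill--Noether strata $E_s$ enter only as formal classes and cancel automatically, so no information about their dimension, irreducibility, or nonemptiness is required. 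The main obstacle is the geometric input rather than the bookkeeping: one must justify that over each stratum of constant $h^0$ the Abel--Jacobi map is a \emph{Zariski}-locally trivial projective bundle, which rests on the existence of a Poincar\'e bundle on $\Pic(C)\times C$ for an arbitrary curve and on cohomology-and-base-change applied over the reduced stratum; the insensitivity of $K_0(\bVar)$ to nilpotents lets me work with reduced strata throughout.
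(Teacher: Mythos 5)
Your proposal is correct and is essentially the paper's own argument: both proofs run the Abel--Jacobi maps from $C_k$ and $C_{2g-2-k}$ to the Picard variety, stratify by Brill--Noether loci of constant $h^0$ (where the maps become projectivizations of vector bundles), and reduce to the identity $[\PP^{k-g}]+\LL^{k-g+1}[\PP^{s-1}]=[\PP^{k-g+s}]$ in $K_0(\bVar)$. The only difference is organizational: the paper packages that identity as a relative comparison formula for two projective bundles over a common base and applies it stratum by stratum, while you expand both $[C_k]$ and $[C_{2g-2-k}]$ as sums over the strata $[E_s]$ and match coefficients, which amounts to the same computation.
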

\begin{proof}
Again, the second part is a direct consequence of the fact that $C_k$ is a
projective bundle over $J(C)$ when $k \geq 2g-1.$ Therefore, we may
assume $g \leq k \leq 2g-2.$ \\

Let $X$ be a variety and $P_0=\PP(E_0)\to X$ and $P_1=\PP(E_1)\to X$ 
be projective
bundles where the vector bundles $E_0$ and $E_1$ have ranks $a_0+1$ and $a_1+1$ respectively, with $a_0>a_1$. The formula for the motive of a projective bundle gives
\begin{eqnarray}
[P_0] &=& [X]  \sum_{l=0}^{a_0}\LL^{ l}  = [X] \sum_{l=0}^{a_0-a_1-1}\LL^{ l} +
[X] \sum_{l=a_0-a_1}^{a_0}\LL^{ l} \nonumber\\
&=& [X] \sum_{l=0}^{a_0-a_1-1} \LL^{ l} +
\LL^{ a_0-a_1}  [X] \sum_{l=0}^{a_1}\LL^{ l} =  [X] \sum_{l=0}^{a_0-a_1-1}\LL^{ l} + \LL^{a_0-a_1}  [P_1] \label{motproj}
\end{eqnarray} 
We remark that this formula remains valid if $a_1=-1$, using the convention
that, in this case, $P_1$ is empty.

Consider the diagram
\begin{equation}
  \label{eq:dualsym}
\xymatrix{
  {C_k} \ar[rd]^{p} & & {C_{2g-2-k}} \ar[ld]_{q}\\
  & \Pic^{k}(C) = J(C)&
}
\end{equation}
where $p$ is the Abel-Jacobi map sending a cycle $Z$ to the line
bundle $\sO_C(Z)$ and $q$ sends the cycle $Z$ to
$K_C  \otimes \sO_C(-Z)$, where $K_C$ is the canonical line bundle of
$C$. Therefore, the fiber of $p$ over $L$ is
$\PP(H^0(L))$ and the fiber of $q$ is $\PP(H^1(L)^\vee)$.
These morphisms are projectivizations of vector bundles when restricted to
a Brill-Noether stratum $B_i=\{L:h^0(L)=i\}$ 
$$ \Pic^k(C) = B_1 \sqcup \cdots \sqcup B_l $$
Note that $h^0(L)\geq \deg(L) +1-g=k+1-g\geq 1$, 
since we are assuming $g\leq k$,
and also $h^0(L)=h^1(L)+k+1-g>h^1(L)$, so we can apply the formula
\eqref{motproj} to $p^{-1}(B_i)$ and $q^{-1}(B_i)$. 

Using the additivity of motives on stratifications and \eqref{motproj} we obtain the following 
$$ [C_k] = \sum_{i=1}^l [ p^{-1}(B_i) ] = 
\sum_{i=1}^l \Big( [ B_i ]  ( \sum_{l=0}^{k-g} \LL^{ l} ) +  [q^{-1}(B_i)]  \LL^{ k-g+1} \Big)$$
$$ = [ J(C) ]  ( \sum_{l=0}^{k-g} \LL^{ l} ) +  [ C_{2g-2-k} ]  \LL^{ k-g+1} $$
which is the desired result.
\end{proof}

Both propositions imply the following Corollary.

\begin{coro}\label{coro:symmpro}
The following formulas hold in $ K_0(\Chow^{\eff}) = K_0(\DM_{\gm}^{\eff}).$ \\
(1) 
If $g \leq k \leq 2g-2$, then
\begin{equation}
   \label{eq:symmpro in coro}
   \chi(C_k) =
   \chi(J(C))  (\sum_{l=0}^{k-g} \LL^{ l})
   + \chi(C_{2g-2-k})  \LL^{ k-g+1}  
\end{equation}
(2) If $k \geq 2g-1$, then 
$$ \chi(C_k) = \chi(J(C))  (\sum_{l=0}^{k-g} \LL^{ l}) $$
\end{coro}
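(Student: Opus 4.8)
The plan is to obtain both identities by transporting the statements of the two preceding propositions into the Grothendieck group $K_0(\Chow^{\eff})$, where isomorphisms of motives and scissor relations between varieties both collapse to equalities of classes. Two equally short routes are available, and I would record the first as the main argument and the second as an internal consistency check.

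First, working directly from Proposition \ref{prop:symmpro}: an isomorphism $M \cong N$ of effective Chow motives yields $[M]=[N]$ in $K_0(\Chow^{\eff})$, and a direct sum decomposition passes to a sum of classes. Thus, interpreting $\chi(C_k)$, $\chi(J(C))$, $\chi(C_{2g-2-k})$ as the classes $[h(C_k)]=\chi_c([C_k])$, $[h(J(C))]$, $[h(C_{2g-2-k})]$, and recalling that tensoring a motive with $\LL$ corresponds to multiplication by $\LL$ in $K_0(\Chow^{\eff})$, the isomorphism \eqref{eq:symmpro} and its $k\geq 2g-1$ analogue become exactly the claimed equalities \eqref{eq:symmpro in coro} and part (2). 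No further computation is needed; the only thing to verify is the purely formal compatibility of $[\,\cdot\,]$ with $\oplus$ and with $\otimes\LL$.

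As an alternative I would instead start from Proposition \ref{prop:symmpro in Grothendieck ring} and apply the ring homomorphism $\chi_c : K_0(\bVar) \to K_0(\Chow^{\eff})$ of \cite{MNP}. Since $C_k$, $C_{2g-2-k}$ and $J(C)$ are smooth projective, $\chi_c$ sends their classes in $K_0(\bVar)$ to the corresponding Chow motives, and $\chi_c(\LL)=\LL$; applying $\chi_c$ term by term to \eqref{eq:symmpro in Grothendieck ring} (and to its part (2)) recovers the same identities. This confirms that the two propositions are mutually consistent under $\chi_c$.

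Finally, to upgrade the identities to $K_0(\DM_{\gm}^{\eff})$, I would invoke Bondarko's isomorphism $K_0(\Chow^{\eff}) = K_0(\DM_{\gm}^{\eff})$ of \cite{Bondarko}, under which the formulas transport verbatim. There is essentially no obstacle in this corollary: all of the geometric content is already contained in the two propositions, and the remaining work is the bookkeeping of pushing isomorphisms (respectively, scissor relations) forward along an additive (respectively, ring) homomorphism. The only point worth stating carefully is that the operation $\otimes\LL$ on motives and multiplication by $\LL=[\A^1]$ in the Grothendieck rings are matched under the class maps, so that the powers of $\LL$ in the two propositions land on the correct powers in \eqref{eq:symmpro in coro}.
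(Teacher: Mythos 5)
Your proposal is correct and matches the paper, whose entire justification is the remark that ``Both propositions imply the following Corollary'': passing from the motive isomorphism of Proposition \ref{prop:symmpro} to classes in $K_0(\Chow^{\eff})$ (equivalently, applying $\chi_c$ to Proposition \ref{prop:symmpro in Grothendieck ring}) and invoking Bondarko's identification $K_0(\Chow^{\eff}) = K_0(\DM_{\gm}^{\eff})$ is exactly the intended argument. Your additional observation that the two routes are mutually consistent under $\chi_c$ is a nice sanity check but not needed.
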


From the motivic decompositions of symmetric products we obtain the following decompositions of motivic zeta function.

\begin{prop}\label{DecZeta-chow}
We have the following equality in $\widehat{K_0}(\Chow^{\eff}).$
$$ Z(C,\LL^{ i}) = \sum_{k=0}^{g-1} \chi(C_{k})  \mathbb{L}^{ ik} + \sum_{k'=0}^{g-2} \chi(C_{k'})  \LL^{ (2i+1)(g-1)-(i+1)k'} $$
$$ + \chi(J(C))  (\frac{\LL^{ ig}}{(1-\LL^{ i})(1-\LL^{ i+1})}) $$
\end{prop}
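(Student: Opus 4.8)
The plan is to start from the definition $Z(C,\LL^{i}) = \sum_{j=0}^{\infty} \chi(C_j)\LL^{ij}$ (valid for $i \geq 1$, where the series converges in the completion) and to split the sum at $j=g$: the range $0 \leq j \leq g-1$ is treated as ``base'' terms, while the motivic decomposition of Corollary \ref{coro:symmpro} is applied to every term with $j \geq g$. The terms with $0 \leq j \leq g-1$ contribute directly the first sum $\sum_{k=0}^{g-1}\chi(C_k)\LL^{ik}$, so no further work is needed there.

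For $j \geq g$ I would invoke Corollary \ref{coro:symmpro}: in both the range $g \leq j \leq 2g-2$ and the range $j \geq 2g-1$, the class $\chi(C_j)$ contributes a Jacobian part $\chi(J(C))\sum_{l=0}^{j-g}\LL^{l}$, while only the range $g \leq j \leq 2g-2$ contributes an additional symmetric-product part $\chi(C_{2g-2-j})\LL^{j-g+1}$. I would collect these two contributions separately. For the Jacobian part, substituting $m=j-g$ and factoring out $\LL^{ig}$ reduces the computation to the generating-function identity
$$ \sum_{m=0}^{\infty}\Big(\sum_{l=0}^{m}\LL^{l}\Big)\LL^{im} = \frac{1}{(1-\LL^{i})(1-\LL^{i+1})}, $$
which is the Cauchy product of $\sum_{l\geq 0}\LL^{l}\LL^{il}$ with $\sum_{n\geq 0}\LL^{in}$; this yields exactly the last term $\chi(J(C))\LL^{ig}/\big((1-\LL^{i})(1-\LL^{i+1})\big)$.

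For the symmetric-product part, I would reindex by $k'=2g-2-j$, so that $j$ running from $g$ to $2g-2$ corresponds to $k'$ running from $g-2$ down to $0$; the short exponent computation $(g-1-k')+i(2g-2-k') = (2i+1)(g-1)-(i+1)k'$ then identifies this contribution with the middle sum $\sum_{k'=0}^{g-2}\chi(C_{k'})\LL^{(2i+1)(g-1)-(i+1)k'}$. Assembling the three pieces gives the claimed equality.

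The only genuinely delicate point — and where the completion $\widehat{K_0}(\Chow^{\eff})$ is essential — is justifying the manipulations of the infinite series, so I expect the main care to lie in checking convergence in the $(\LL)$-adic topology. For $i \geq 1$ the summands $\chi(C_j)\LL^{ij}$ have $\LL$-adic valuation tending to infinity, so the series converges, and $1-\LL^{i}$, $1-\LL^{i+1}$ are invertible in the completion since $\LL^{i},\LL^{i+1} \in (\LL)$. Everything else is bookkeeping: once convergence is established, rearranging the series, splitting off the finite correction terms, and reindexing are all legitimate.
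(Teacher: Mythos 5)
Your proposal is correct and follows essentially the same route as the paper: split the series at $j=g$, apply Corollary \ref{coro:symmpro} to all terms with $j\geq g$, reindex the symmetric-product contributions via $k'=2g-2-j$, and sum the Jacobian contributions into $\LL^{ig}/\bigl((1-\LL^{i})(1-\LL^{i+1})\bigr)$. The only (immaterial) difference is that you evaluate the Jacobian series by a Cauchy-product identity, whereas the paper writes $\sum_{l=0}^{k-g}\LL^{l}=(1-\LL^{k-g+1})/(1-\LL)$ and sums two geometric series; your added attention to $(\LL)$-adic convergence is a point the paper leaves implicit.
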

\begin{proof}
Using Corollary \ref{coro:symmpro} we have the following.
$$ Z(C,\LL^{ i}) = \sum_{k=0}^{g-1} \chi(C_{k})  \mathbb{L}^{ ik} + \sum_{k=g}^{2g-2} \chi(C_{k})  \mathbb{L}^{ ik} + \sum_{k=2g-1}^{\infty} \chi(C_{k})  \mathbb{L}^{ ik} $$
$$ = \sum_{k=0}^{g-1} \chi(C_{k})  \mathbb{L}^{ ik} + \sum_{k=g}^{2g-2} (\chi(J(C))  (\sum_{l=0}^{k-g} \LL^{ l}) + \chi(C_{2g-2-k})  \LL^{ k-g+1})  \mathbb{L}^{ ik} + \sum_{k=2g-1}^{\infty} \chi(J(C))  (\sum_{l=0}^{k-g} \LL^{ l})  \mathbb{L}^{ ik} $$
$$ = \sum_{k=0}^{g-1} \chi(C_{k})  \mathbb{L}^{ ik} + \sum_{k=g}^{2g-2} \chi(C_{2g-2-k})  \LL^{ (i+1)k-g+1} + \sum_{k=g}^{\infty} \chi(J(C))  (\sum_{l=0}^{k-g} \LL^{ l})  \mathbb{L}^{ ik} $$
$$ = \sum_{k=0}^{g-1} \chi(C_{k})  \mathbb{L}^{ ik} + \sum_{k'=0}^{g-2} \chi(C_{k'})  \LL^{ (2i+1)(g-1)-(i+1)k'} + \sum_{k=g}^{\infty} \chi(J(C))  (\frac{1-\LL^{ k-g+1}}{1-\LL})  \mathbb{L}^{ ik} $$
$$ = \sum_{k=0}^{g-1} \chi(C_{k})  \mathbb{L}^{ ik} + \sum_{k'=0}^{g-2} \chi(C_{k'})  \LL^{ (2i+1)(g-1)-(i+1)k'} + \chi(J(C))  (\frac{\LL^{ ig}}{(1-\LL^{ i})(1-\LL^{ i+1})}) $$
\end{proof}

By taking Euler characteristic we have the following Corollary.

\begin{coro}
We have the same decompositions for motivic Poincar\'e polynomials.
\end{coro}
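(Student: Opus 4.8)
The plan is to obtain the Corollary from Proposition \ref{DecZeta-chow} purely formally, by applying the Euler characteristic realization to the identity we have already established in $\widehat{K_0}(\Chow^{\eff})$. The essential point is that ``taking Euler characteristic'' is a \emph{continuous ring homomorphism} out of the completed Grothendieck ring of motives: it is additive, hence commutes with the finite sums $\bigoplus_{l=0}^{k-g}\LL^{\otimes l}$ and with the termwise sums defining $Z(C,\LL^i)$; it is multiplicative, hence commutes with the Lefschetz tensor factors $\otimes\,\LL^{\otimes c}$ that occur in Corollary \ref{coro:symmpro} and Proposition \ref{DecZeta-chow}; and it sends the class of the Lefschetz motive to the Lefschetz class, so that every monomial $\LL^{ik}$, $\LL^{(2i+1)(g-1)-(i+1)k'}$ and the quotient $\frac{\LL^{ig}}{(1-\LL^i)(1-\LL^{i+1})}$ is carried to the expression bearing the same exponents.

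Granting this, the argument is to apply the homomorphism term by term to the three-part identity of Proposition \ref{DecZeta-chow}. Because a ring homomorphism neither creates nor cancels summands, the combinatorial shape of the decomposition is preserved exactly: the principal range $0\le k\le g-1$, the Poincar\'e-dual range $0\le k'\le g-2$, and the single Jacobian tail all reappear, with the classes $\chi(C_k)$, $\chi(C_{k'})$, $\chi(J(C))$ replaced by the corresponding motivic Poincar\'e polynomials and with the same powers of $\LL$. This is precisely the assertion that ``we have the same decompositions.''

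The only step that is not automatic --- and hence the main obstacle --- concerns the Jacobian tail, which is not a finite expression but the limit in $\widehat{K_0}(\Chow^{\eff})$ of the partial sums $\sum_{k=g}^{N}\chi(J(C))\bigl(\sum_{l=0}^{k-g}\LL^{l}\bigr)\LL^{ik}$. To move this limit across the realization I would verify that the Euler characteristic map is continuous for the relevant completions: it respects the filtration by powers of $\LL$ defining $\widehat{K_0}(\Chow^{\eff})$, since it carries $\LL^m$ into the $m$-th step of the target filtration, so Cauchy sequences go to Cauchy sequences and the map commutes with the limit. Once continuity is in hand, the image of the tail is again the rational expression $\chi(J(C))\,\frac{\LL^{ig}}{(1-\LL^i)(1-\LL^{i+1})}$, and the decomposition of Proposition \ref{DecZeta-chow} transfers verbatim, completing the proof.
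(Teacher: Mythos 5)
Your proposal is correct and takes essentially the same route as the paper, whose entire argument for this corollary is the single phrase ``by taking Euler characteristic'': you apply the Euler-characteristic ring homomorphism termwise to the identity of Proposition \ref{DecZeta-chow}, exactly as intended. Your extra verification that the map respects the $(\LL)$-adic filtrations, so that the infinite Jacobian tail commutes with the limit in the completion, is the one point the paper leaves implicit, and you have supplied it correctly.
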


Using a similar argument we have the following.

\begin{prop}\label{DecZeta-var}
We have the following equality in $\widehat{K_0}(\bVar).$
$$ \LL^{(2i-1)(g-1)} Z(C,\LL^{-i}) = \sum_{k=0}^{g-1} [C_{k}] \mathbb{L}^{(i-1)k} + \sum_{k'=0}^{g-2} [C_{k'}]  \LL^{ (2i-1)(g-1)-ik'} $$
$$ + [J(C)]  (\frac{\LL^{ (i-1)g}}{(\LL^{ i-1 }-1)(\LL^{ i }-1)}) $$
\end{prop}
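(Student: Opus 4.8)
The plan is to follow the proof of Proposition \ref{DecZeta-chow} almost verbatim, working now in $\widehat{K_0}(\bVar)$ and replacing Corollary \ref{coro:symmpro} by its Grothendieck-ring incarnation, Proposition \ref{prop:symmpro in Grothendieck ring}. First I would expand $Z(C,\LL^{-i}) = \sum_{k=0}^{\infty}[C_k]\LL^{-ik}$ and split the sum into the three ranges $0\le k\le g-1$, $g\le k\le 2g-2$, and $k\ge 2g-1$. The first point that genuinely needs checking, and which differs from the Chow case, is convergence: here the completion is \emph{dimensional} (Definition \ref{def:dimensional-completion}) rather than the $\LL$-adic completion used for $\widehat{K_0}(\Chow^{\eff})$. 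Since $\dim C_k - ik = (1-i)k \to -\infty$ precisely when $i\ge 2$, the series converges in $\widehat{K_0}(\bVar)$ under this hypothesis, and I would carry out the whole computation for $i\ge 2$.

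Next I would apply Proposition \ref{prop:symmpro in Grothendieck ring} to every term with $k\ge g$, writing $[C_k] = [J(C)]\sum_{l=0}^{k-g}\LL^{l} + [C_{2g-2-k}]\LL^{k-g+1}$ in the middle range and $[C_k] = [J(C)]\sum_{l=0}^{k-g}\LL^{l}$ in the tail. This separates the contribution of $k\ge g$ into a Jacobian part $[J(C)]\sum_{k=g}^{\infty}(\sum_{l=0}^{k-g}\LL^{l})\LL^{-ik}$ and a symmetric-product part $\sum_{k=g}^{2g-2}[C_{2g-2-k}]\LL^{k-g+1-ik}$. For the Jacobian part I would substitute $\sum_{l=0}^{k-g}\LL^{l} = (1-\LL^{k-g+1})/(1-\LL)$, set $m=k-g$, and sum the two geometric series $\sum_m \LL^{-im}$ and $\sum_m \LL^{(1-i)m}$; both converge in the dimensional completion exactly because $i\ge 2$, and collecting them yields a closed rational expression with denominator $(\LL^{i}-1)(\LL^{i-1}-1)$. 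For the symmetric-product part I would reindex by $k'=2g-2-k$.

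Multiplying the whole identity by the prefactor $\LL^{(2i-1)(g-1)}$ and simplifying exponents is then routine bookkeeping: the Jacobian term acquires numerator $\LL^{(i-1)g}$, and the two finite sums become $\sum_{k=0}^{g-1}[C_k]\LL^{(2i-1)(g-1)-ik}$ and $\sum_{k'=0}^{g-2}[C_{k'}]\LL^{(i-1)k'}$. Apart from convergence, the only non-mechanical point — which I expect to be the main (if minor) obstacle — is that this is not yet literally the right-hand side of the statement: the ranges of the two finite sums differ from the target by the single boundary index $k=g-1$. I would resolve this by noting that at $k=g-1$ the two exponents coincide, $(2i-1)(g-1)-i(g-1) = (i-1)(g-1)$, so transferring that one term from the first sum to the second interchanges the two displayed finite sums and produces exactly the asserted formula. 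As an independent sanity check one can observe that the functional equation $Z(C,t) = \LL^{g-1}t^{2g-2}Z(C,\LL^{-1}t^{-1})$ gives $\LL^{(2i-1)(g-1)}Z(C,\LL^{-i}) = Z(C,\LL^{i-1})$, which forces precisely the symmetry underlying this regrouping.
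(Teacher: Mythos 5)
Your proof is correct and is precisely the ``similar argument'' that the paper leaves implicit for Proposition~\ref{DecZeta-var}: you mirror the proof of Proposition~\ref{DecZeta-chow}, substituting Proposition~\ref{prop:symmpro in Grothendieck ring} for Corollary~\ref{coro:symmpro}, and you correctly supply the two points the paper glosses over (convergence in the dimensional completion, which forces $i\geq 2$, and the transfer of the boundary term $k=g-1$ between the two finite sums, using that the exponents $(2i-1)(g-1)-i(g-1)$ and $(i-1)(g-1)$ coincide). One caveat concerns only your final sanity check, not the proof itself: the series $Z(C,\LL^{i-1})$ does not converge in $\widehat{K_0}(\bVar)$ (its terms $[C_k]\LL^{(i-1)k}$ have dimension $ik\to+\infty$), so the identity $\LL^{(2i-1)(g-1)}Z(C,\LL^{-i})=Z(C,\LL^{i-1})$ is only a heuristic in that ring, though it is a genuine identity in $\widehat{K_0}(\Chow^{\eff})$.
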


Last but not least, we need the following description of motivic zeta function to compare computation of the classes of $\Bun_{r,d}$ in the completion of Grothendieck groups of Chow motives in \cite{Bano;thesis, Bano2} and Voevodsky motives in \cite{HL1, HL2}. The following formula in the Grothendieck ring of varieties is known to experts (cf. \cite{GPHS, Heinloth}). 

\begin{prop}[Motivic zeta function]\label{motiviczeta}
We have the following identity.
$$ Z(C,t) = \frac{(1+t)^{h^1(C)}}{(1-t)(1-\LL t)} $$
\end{prop}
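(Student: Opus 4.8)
The plan is to expand $Z(C,t)$ using del Ba\~no's motivic decomposition of the symmetric products (Proposition \ref{prop:bano-ck}) and to recognize the resulting triple sum as a Cauchy product of three generating series in $t$. By definition $Z(C,t) = \sum_{k=0}^{\infty} h(C_k)\, t^{k}$, and substituting the first expression in Proposition \ref{prop:bano-ck},
$$ h(C_k) = \sum_{\substack{a+b+c=k \\ a,b,c \geq 0}} \lambda^b h^1(C) \otimes \LL^{\otimes c}, $$
the coefficient of $t^{k}$ becomes exactly the $k$-th convolution coefficient of the three sequences $(1)_{a \geq 0}$, $(\lambda^b h^1(C))_{b \geq 0}$ and $(\LL^{\otimes c})_{c \geq 0}$.

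First I would record this factorization as an identity of formal power series in $t$ with coefficients in $\widehat{K_0}(\Chow^{\eff})$:
$$ Z(C,t) = \Big( \sum_{a \geq 0} t^{a} \Big) \Big( \sum_{b \geq 0} \lambda^b h^1(C)\, t^{b} \Big) \Big( \sum_{c \geq 0} \LL^{\otimes c}\, t^{c} \Big). $$
Then I would evaluate the three factors separately. The first is $\frac{1}{1-t}$ and the third is $\frac{1}{1-\LL t}$, both standard geometric series in the power-series ring. For the middle factor the key point is that $h^1(C)$ has rank $2g$, so $\lambda^b h^1(C) = 0$ for $b > 2g$; hence this factor is the genuine polynomial $\sum_{b=0}^{2g} \lambda^b h^1(C)\, t^{b}$, which is precisely $(1+t)^{h^1(C)}$ by the defining convention of the paper. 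Multiplying the three factors yields the claimed formula.

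Equivalently, to avoid inverting power series, I would cross-multiply and verify
$$ (1-t)(1-\LL t)\, Z(C,t) = (1+t)^{h^1(C)} $$
by comparing coefficients of $t^{k}$: the left-hand coefficient is $h(C_k) - (1+\LL)\, h(C_{k-1}) + \LL\, h(C_{k-2})$, and a direct telescoping using Proposition \ref{prop:bano-ck} collapses it to $\lambda^k h^1(C)$, with the conventions $h(C_j)=0$ for $j<0$ and $\lambda^k h^1(C)=0$ for $k>2g$. A quick check in low degrees ($k=0,1,2$) confirms the cancellation pattern, and the general case is the same bookkeeping.

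The only delicate point is that all manipulations take place in the ring of formal power series in $t$ over $\widehat{K_0}(\Chow^{\eff})$, so there is no convergence issue in the variable $t$ beyond the formal level, and the factorization step is purely combinatorial once Proposition \ref{prop:bano-ck} is granted. The single genuinely necessary input beyond that decomposition is the vanishing $\lambda^b h^1(C)=0$ for $b>2g$, which ensures the numerator is a polynomial of degree $2g$; this is the motivic shadow of $\dim_{\QQ} H^1(C) = 2g$ and is consistent with the degree assertion in Kapranov's rationality theorem recalled above.
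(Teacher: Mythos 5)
Your core computation is correct and is, in substance, the same as the paper's: expanding via del Ba\~no's decomposition (Proposition \ref{prop:bano-ck}), the coefficient of $t^k$ in $(1-t)(1-\LL t)Z(C,t)$ telescopes to $\lambda^k h^1(C)$ for every $k\geq 0$, whether you phrase this as a triple Cauchy product or by cross-multiplying. The gap is precisely the step you flag as ``the single genuinely necessary input'': the vanishing $\lambda^b h^1(C)=0$ for $b>2g$. You justify it by saying $h^1(C)$ ``has rank $2g$,'' and in the cross-multiplied version you even demote it to a ``convention.'' It is neither. In a $\QQ$-linear pseudo-abelian tensor category, an object whose cohomological realizations are $2g$-dimensional need \emph{not} have vanishing higher $\lambda$-powers; that implication is exactly the content of Kimura--O'Sullivan finite-dimensionality, which is conjectural for general motives and is a theorem for $h^1(C)$ only through nontrivial input (e.g.\ $h^1(C)\cong h^1(J(C))$ plus K\"unnemann's analysis of motives of abelian varieties). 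So as written, your argument silently assumes a statement that is deeper than the proposition being proved, with a heuristic (``motivic shadow of $\dim_\QQ H^1(C)=2g$'') in place of a proof.

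The repair is easy and is exactly how the paper avoids this issue. Either (i) invoke Kapranov's rationality theorem recalled in Section 2, which asserts that $(1-t)(1-\LL t)Z(C,t)$ is a polynomial of degree $2g$: combined with your telescoped identity this \emph{proves} the needed vanishing of the coefficients $\lambda^b h^1(C)$ for $b>2g$ (at least as classes, which is all the proposition requires), rather than assuming it; or (ii) check the vanishing in degrees $k\geq 2g+1$ directly from Proposition \ref{prop:symmpro}(2), with no statement about $\lambda$-powers at all: for such $k$, all of $C_k$, $C_{k-1}$, $C_{k-2}$ are projective bundles over $J(C)$, so
$$ h(C_k)-(1+\LL)h(C_{k-1})+\LL h(C_{k-2}) = \Bigl(\tfrac{1-\LL^{ k-g+1}}{1-\LL}-(1+\LL)\tfrac{1-\LL^{ k-g}}{1-\LL}+\LL\tfrac{1-\LL^{ k-g-1}}{1-\LL}\Bigr) h(J(C))=0. $$
With either repair your proof is complete, and it then coincides with the argument given in the paper, which combines Kapranov's degree bound with the same telescoping and uses the projective-bundle computation above as the consistency check in high degrees.
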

\begin{proof}
Kapranov proved that $(1-t)(1-\LL t)Z(C,t)$ is a polynomial of degree $2g$ in \cite{Kapranov}. One can check that the constant coefficient is $1$ and the coefficient of $t$ is $h(C)-1-\LL=h^1(C)=\lambda^1 h^1(C).$ For $k \geq 2,$ the coefficient of the term $t^k$ is $h(C_k)-(1+\LL)h(C_{k-1})+\LL h(C_{k-2}).$ From Proposition \ref{prop:bano-ck} and the previous discussions, we see that the coefficient term is isomorphic to $\lambda^k h^1(C).$ Therefore we see that the polynomial of degree $2g$ is $(1+t)^{h^1(C)}.$ Moreover, we can also check that the coefficient of $t^k$ is zero when $k \geq 2g+1$ since 
$$ h(C_k)-(1+\LL)h(C_{k-1})+\LL h(C_{k-2}) $$
$$ = (\frac{1-\LL^{ k-g+1}}{1-\LL}-(1+\LL)\frac{1-\LL^{ k-g}}{1-\LL}+\LL\frac{1-\LL^{ k-g-1}}{1-\LL}) h(J(C))=0. $$
Therefore we see that $(1-t)(1-\LL t)Z(C,t)$ is indeed the polynomial $(1+t)^{h^1(C)}.$
\end{proof}

Especially, we have the following identity.

\begin{rema}
$$ Z(C,\LL^{ i}) = \frac{(1+\LL^{ i})^{h^1(C)}}{(1-\LL^{ i})(1-\LL^{ i+1})} $$
\end{rema}

We will use the above identity to compare results in \cite{Bano;thesis, Bano1, HL2}.

\bigskip

\section{A new proof for rank 2 cases}

When $r=2,$ S. del Ba\~no computed the motives of $M(2,L)$ in \cite{Bano2}. Using his formula, the second named author obtained the following formula in \cite{Lee}.

\begin{theo}
The motivic Poincar\'e polynomial of $M(2,L)$ has the following decomposition.
$$ \chi(M(2,L)) = \sum_{k=0}^{g-2} \chi(C_{k})  (\mathbb{L}^{ k} + \mathbb{L}^{ 3g-3-2k}) + \chi(C_{g-1})  \mathbb{L}^{ g-1}. $$
\end{theo}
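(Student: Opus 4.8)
The plan is to run del Ba\~no's inversion formula (Theorem \ref{Bano;inversion}) with $r=2$, $d=1$ and then collapse the output using the zeta decomposition of Proposition \ref{DecZeta-chow}. Since $n=2$, the only ordered partitions of $2$ are $(2)$ and $(1,1)$, so the multiple sum reduces to two explicit summands, which I would evaluate by hand. For $(2)$ (the $s=1$ term) all the empty products and the empty $\LL$-exponent collapse the summand to $(1+1)^{h^1(C)}\,(1+\LL)^{h^1(C)}/((1-\LL)(1-\LL^2))$. For $(1,1)$ (the $s=2$ term) the only subtlety is the fractional part $\langle-\tfrac12\rangle=\tfrac12$, which makes the exponent $n_1n_2(g-1)+(n_1+n_2)\langle-\tfrac12\rangle=(g-1)+1=g$, so this summand is $-\,((1+1)^{h^1(C)})^{2}\,\LL^{g}/((1-\LL)(1-\LL^2))$. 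The formula of Theorem \ref{Bano;inversion} computes $M(2,d)$; passing to the fixed-determinant space removes one overall factor $(1+1)^{h^1(C)}=\chi(J(C))$ (equivalently, one reruns the same Harder--Narasimhan recursion starting from the Behrend--Dhillon class of $\Bun_{\mathrm{SL}_2}$ recalled above, which produces $\chi(M(2,L))$ directly with one fewer factor of $(1+1)^{h^1(C)}$ in each term, using $\chi(M(2,d))=\chi(J(C))\,\chi(M(2,L))$). Together with the identity $Z(C,\LL)=(1+\LL)^{h^1(C)}/((1-\LL)(1-\LL^2))$, this leaves the compact expression
$$\chi(M(2,L))=Z(C,\LL)-\chi(J(C))\,\frac{\LL^{g}}{(1-\LL)(1-\LL^2)}.$$

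The next step is to substitute Proposition \ref{DecZeta-chow} at $i=1$, namely
$$Z(C,\LL)=\sum_{k=0}^{g-1}\chi(C_k)\LL^{k}+\sum_{k'=0}^{g-2}\chi(C_{k'})\LL^{3(g-1)-2k'}+\chi(J(C))\frac{\LL^{g}}{(1-\LL)(1-\LL^2)}.$$
The Jacobian tail appearing here is exactly the correction term produced by the partition $(1,1)$, so the two cancel and only the symmetric-product sums survive. Isolating the $k=g-1$ summand of the first sum and pairing the common ranges $0\le k\le g-2$ of the two sums then rewrites the result as $\sum_{k=0}^{g-2}\chi(C_k)(\LL^{k}+\LL^{3g-3-2k})+\chi(C_{g-1})\LL^{g-1}$, which is precisely the assertion.

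The individual partition terms are a routine evaluation, so the genuine content of the argument is structural: Proposition \ref{DecZeta-chow} has been arranged so that its Jacobian tail coincides with the $s=2$ correction. The point I would check most carefully is therefore the normalization relating $M(2,d)$ and $M(2,L)$ (the single factor $\chi(J(C))$, whose suppression must be justified rather than obtained by formal division), together with the fact that both copies of $\chi(J(C))\,\LL^{g}/((1-\LL)(1-\LL^2))$ are honest convergent elements of $\widehat{K_0}(\Chow^{\eff})$, so that their cancellation is legitimate in the completed ring and not merely a manipulation of divergent series.
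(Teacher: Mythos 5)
Your proposal follows exactly the paper's own argument: it derives the identity $\chi(M(2,L)) = Z(C,\LL) - \chi(J(C))\,\LL^{g}/((1-\LL)(1-\LL^{2}))$ from del Ba\~no's inversion formula (Theorem \ref{Bano;inversion}) and then cancels the Jacobian tail against the decomposition of $Z(C,\LL)$ in Proposition \ref{DecZeta-chow} at $i=1$, which is precisely the paper's two-step proof. Your explicit evaluation of the partitions $(2)$ and $(1,1)$ and your flagging of the $M(2,d)$-versus-$M(2,L)$ normalization (the single factor of $\chi(J(C))$) only fill in details that the paper leaves implicit, so the route is essentially identical and correct.
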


Now let us give a completely new proof of the above theorem using the previous discussions.

\begin{proof}
From del Ba\~no's theorem (cf. Theorem \ref{Bano;inversion}), we see that the motivic Poincar\'e polynomial of $M(2,L)$ is as follows.
$$ \chi(M(2,L)) = Z(C,\LL) - \chi(J(C))  (\frac{\LL^{ g}}{(1-\LL)(1-\LL^{ 2})}) $$

On the other hand, we have the following identity from Proposition \ref{DecZeta-chow}.
$$ Z(C,\LL) = \sum_{k=0}^{g-2} \chi(C_{k})  (\mathbb{L}^{ k} + \mathbb{L}^{ 3g-3-2k}) + \chi(C_{g-1})  \mathbb{L}^{ g-1} + \chi(J(C))  (\frac{\LL^{ g}}{(1-\LL)(1-\LL^{ 2})}) $$

Therefore, we obtain the following desired result.
\end{proof}

\bigskip

\section{Rank 3 cases}

Now let us discuss rank 3 cases. Let $C$ be a smooth projective curve of genus $g$. Let
$M$ be the moduli space of vector bundles on $C$, stable of rank 3
and fixed determinant of degree 1.

\begin{theo}\label{rank3chow}
The motivic Poincar\'e polynomial of $M(3,L)$ has the following decomposition.
$$ \chi(M(3,L)) = \sum_{k_1+k_2 < 2(g-1)} \chi(C_{k_1} \times C_{k_2})  (\mathbb{L}^{ k_1+2k_2} + \mathbb{L}^{ 8g-8-2k_1-3k_2}) + $$
$$ \sum_{k_1+k_2=2(g-1), k_1<g-1} \chi(C_{k_1} \times C_{k_2})  (\mathbb{L}^{ k_1+2k_2} + \mathbb{L}^{ 8g-8-2k_1-3k_2}) + \chi(C_{g-1} \times C_{g-1})  \LL^{ 3(g-1)}. $$
\end{theo}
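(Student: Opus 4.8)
The plan is to mirror the rank 2 argument of Section 4, with the single zeta factor replaced by the rank 3 input coming from del Ba\~no's inversion formula. First I would specialize Theorem \ref{Bano;inversion} to $r=3$, $d=1$. The outer sum then breaks into three groups according to the number of parts $s$: the partition $3$ ($s=1$), the two partitions $2{+}1$ and $1{+}2$ ($s=2$), and $1{+}1{+}1$ ($s=3$). In each group the inner factors $\prod_{i=1}^{n_j-1}\frac{(1+\LL^i)^{h^1(C)}}{(1-\LL^i)(1-\LL^{i+1})}$ are, by Proposition \ref{motiviczeta} specialized at $t=\LL^i$, exactly the zeta values $Z(C,\LL)$ and $Z(C,\LL^2)$, while each remaining factor $(1+1)^{h^1(C)}$ equals $\chi(J(C))$ (Remark \ref{rmk:bano-jacobian}). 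The Harder--Narasimhan exponent $\sum_{i<j}n_in_j(g-1)+\sum_i(n_i+n_{i+1})\langle-\tfrac{n_1+\cdots+n_i}{3}\rangle$ evaluates to $0$ for $s=1$, to $2g$ and $2g-1$ for the two $s=2$ terms, and to $3g-1$ for $s=3$. After dividing out the factor $\chi(J(C))$ relating the varying-determinant moduli of Theorem \ref{Bano;inversion} to $M(3,L)$ (the normalization implicit in Section 4), I expect to reach
$$\chi(M(3,L)) = Z(C,\LL)\,Z(C,\LL^2) - \chi(J(C))\,\frac{\LL^{2g-1}+\LL^{2g}}{(1-\LL)(1-\LL^3)}\,Z(C,\LL) + \chi(J(C))^2\,\frac{\LL^{3g-1}}{(1-\LL)^2(1-\LL^2)^2}.$$

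Next I would expand the zeta values using Proposition \ref{DecZeta-chow}, writing $Z(C,\LL^i)=S_i+J_i$ where $S_i=\sum_{k=0}^{g-1}\chi(C_k)\LL^{ik}+\sum_{k'=0}^{g-2}\chi(C_{k'})\LL^{(2i+1)(g-1)-(i+1)k'}$ is the symmetric-product part and $J_i=\chi(J(C))\frac{\LL^{ig}}{(1-\LL^i)(1-\LL^{i+1})}$ is the Jacobian tail. Expanding $Z(C,\LL)Z(C,\LL^2)=S_1S_2+S_1J_2+J_1S_2+J_1J_2$ and using multiplicativity $\chi(C_{k_1})\chi(C_{k_2})=\chi(C_{k_1}\times C_{k_2})$, the product of the two \emph{first} sums of $S_1S_2$ yields $\sum_{k_1,k_2=0}^{g-1}\chi(C_{k_1}\times C_{k_2})\LL^{k_1+2k_2}$, and the product of the two \emph{second} sums yields $\sum_{k_1',k_2'=0}^{g-2}\chi(C_{k_1'}\times C_{k_2'})\LL^{8g-8-2k_1'-3k_2'}$. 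These are precisely the two exponent families of the statement; in particular the corner $(g-1,g-1)$, where the two exponents coincide at $3(g-1)$, is produced exactly once, matching the isolated term $\chi(C_{g-1}\times C_{g-1})\LL^{3(g-1)}$.

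Finally, it remains to show that everything not yet accounted for either cancels or reassembles into the boundary terms, and this is the main obstacle. The leftovers are the two cross products inside $S_1S_2$, the mixed terms $S_1J_2+J_1S_2$, the pure Jacobian product $J_1J_2$, and the two correction terms of the first step — all still carrying a factor $\chi(J(C))$. The genuine difference from the rank 2 case is that the boundary pairs of the theorem ($k_1+k_2=2(g-1)$ with $k_1<g-1$) involve symmetric products $C_{k_2}$ with $g\le k_2\le 2g-2$, which never occur in $S_1S_2$; these must be \emph{created} by running Corollary \ref{coro:symmpro} backwards, i.e.\ by recognizing a combination $\chi(J(C))(\sum_l\LL^l)+\chi(C_{2g-2-k_2})\LL^{k_2-g+1}$ as $\chi(C_{k_2})$. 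Concretely I would sum the geometric series hidden in $J_1$, $J_2$ and in the denominators $(1-\LL)(1-\LL^3)$ and $(1-\LL)^2(1-\LL^2)^2$, use Poincar\'e duality for the Jacobian (Proposition \ref{Duality of Jacobian}) to line up the powers $\LL^{k_1+2k_2}$ with $\LL^{8g-8-2k_1-3k_2}$, and check that every term still containing a bare $\chi(J(C))$ — rather than a genuine product of two symmetric powers — cancels, just as the single Jacobian tail cancels in the rank 2 proof. The delicate points are the accounting at the boundary $k_1+k_2=2(g-1)$ and at the self-dual corner, which I would settle by collecting the coefficient of each fixed $\chi(C_{k_1}\times C_{k_2})$ and verifying the two sides agree.
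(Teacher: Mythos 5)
Your proposal is correct and follows essentially the same route as the paper: the intermediate identity you extract from del Ba\~no's inversion formula, namely
$\chi(M(3,L)) = Z(C,\LL)Z(C,\LL^{2}) - \chi(J(C))\frac{\LL^{2g-1}(1+\LL)}{(1-\LL)(1-\LL^{3})}Z(C,\LL) + \chi(J(C))^{2}\frac{\LL^{3g-1}}{(1-\LL)^{2}(1-\LL^{2})^{2}}$,
is exactly the formula the paper reaches by subtracting the Harder--Narasimhan (unstable-strata) contributions from $\chi(\mathrm{Bun}_{3,L})=Z(C,\LL)Z(C,\LL^{2})$, and your endgame --- cancellation of the $\chi(J(C))^{2}$ terms, reassembly of the $\chi(J(C))$ terms, and creation of the boundary terms with $g\le k_2\le 2g-2$ by running Corollary \ref{coro:symmpro} in reverse, with separate bookkeeping at the corner $(g-1,g-1)$ --- is precisely how the paper's proof concludes. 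The only differences are presentational: the paper cites the computed motive of the unstable locus rather than specializing the inversion formula, and it finishes the coefficient check via an explicit polynomial identity in a formal variable, which is the verification you defer.
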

\begin{proof}
From Theorem \ref{Bano;ind-variety} and Proposition \ref{motiviczeta} we have the following identity.
$$ \chi(\mathrm{Bun}_{3,L}) = Z(C,\LL)  Z(C,\LL^{ 2}) = (\sum_{k=0}^{\infty} \chi(C_{k_1})  \mathbb{L}^{ k_1})  (\sum_{k=0}^{\infty} \chi(C_{k_2})  \mathbb{L}^{ 2k_2}) $$

And we have the following equality for $g \leq k \leq 2g-2.$
$$ \chi(C_k) = \chi(J(C))  (\sum_{l=0}^{k-g} \LL^{ l}) + \chi(C_{2g-2-k})  \LL^{ k-g+1} $$

Therefore we obtain the following identity.
$$ \chi(\mathrm{Bun}_{3,L}) = Z(C,\LL)  Z(C,\LL^{ 2}) = (\sum_{k=0}^{\infty} \chi(C_{k_1})  \mathbb{L}^{ k_1})  (\sum_{k=0}^{\infty} \chi(C_{k_2})  \mathbb{L}^{ 2k_2}) $$
$$ = ( \sum_{k_1=0}^{g-1} \chi(C_{k_1})  \mathbb{L}^{ k_1} + \sum_{k_1=g}^{2g-2} \chi(C_{2g-2-k_1})  \LL^{ 2k_1-g+1} + \sum_{k_1=g}^{\infty} \chi(J(C))  (\sum_{l=0}^{k_1-g} \LL^{ l})  \mathbb{L}^{ k_1} ) $$
$$  ( \sum_{k_2=0}^{g-1} \chi(C_{k_2})  \mathbb{L}^{ 2k_2} + \sum_{k_2=g}^{2g-2} \chi(C_{2g-2-k_2})  \LL^{ 3k_2-g+1} + \sum_{k_2=g}^{\infty} \chi(J(C))  (\sum_{l=0}^{k_2-g} \LL^{ l})  \mathbb{L}^{ 2k_2} ) $$
$$ = ( \sum_{k_1=0}^{g-1} \chi(C_{k_1})  \mathbb{L}^{ k_1} + \sum_{k_1'=0}^{g-2} \chi(C_{k_1'})  \LL^{ 3g-3-2k_1'} + \chi(J(C))  (\frac{\LL^{ g}}{(1-\LL)(1-\LL^{ 2})})) $$
$$  ( \sum_{k_2=0}^{g-1} \chi(C_{k_2})  \mathbb{L}^{ 2k_2} + \sum_{k_2'=0}^{g-2} \chi(C_{k_2'})  \LL^{ 5g-5-3k_2'} + \chi(J(C))  (\frac{\LL^{ 2g}}{(1-\LL^{ 2})(1+\LL+\LL^{ 2})})) $$

Let $\mathrm{Bun}_{3,L}^{\mathrm{un}}$ be the motive of unstable part and we can compute it using Harder-Narasimhan filtration. We have the following identity.
$$ \chi(\mathrm{Bun}_{3,L}) = \chi(M(3,L)) + \chi(\mathrm{Bun}_{3,L}^{\mathrm{un}}) $$

The motive of unstable part was computed by several authors, e.g. see \cite{Bano;thesis, DR, EK}. Using above expression, we obtain the following identity.
$$ \chi(M(3,L)) = \chi(\mathrm{Bun}_{3,L}) - \frac{\LL^{ 2g}}{1-\LL^{ 3}}  (\chi(J(C))  \chi(B\GG_m))  Z(C,\LL) $$
$$ - \frac{\LL^{ 2g-1}}{1-\LL^{ 3}}  (\chi(J(C))  \chi(B\GG_m))  Z(C,\LL) + \frac{\LL^{ 3g-1}}{(1-\LL^{ 2})^{ 2}}  (\chi(J(C))  \chi(B\GG_m))^{ 2} $$

Now let us do the calculation. We can check that the above formula can be reduced to the following formula.
$$ \chi(M(3,L)) = \chi(\mathrm{Bun}_{3,L}) - \frac{\LL^{ 2g-1}  (1+\LL)}{(1-\LL)(1-\LL^{ 3})}  \chi(J(C)  Z(C,\LL)  + \frac{\LL^{ 3g-1}}{(1-\LL)^{ 2}(1-\LL^{ 2})^{ 2}}  \chi(J(C))^{ 2} $$

For the terms containing $\chi(J(C))^2$ is as follows
$$ (\frac{\LL^{ 3g}}{(1-\LL)(1-\LL^{ 2})^2(1-\LL^{ 3})} - \frac{\LL^{ 3g-1}(1+\LL)^{ 2}}{(1-\LL)(1-\LL^{ 2})^{ 2}(1-\LL^{ 3})} + \frac{\LL^{ 3g-1}(1+\LL+\LL^{ 2})}{(1-\LL)(1-\LL^{ 2})^{ 2}(1-\LL^{ 3})})  \chi(J(C))^2 $$
and hence it vanishes. 

For the terms containing $\chi(J(C))$ is as follows
$$ \{ (\sum_{k_1=0}^{g-1} \chi(C_{k_1})  \mathbb{L}^{ k_1} + \sum_{k_1'=0}^{g-2} \chi(C_{k_1'})  \LL^{ 3g-3-2k_1'})  (\frac{\LL^{ 2g}}{(1-\LL^{ 2})(1-\LL^{ 3})}) $$
$$ + (\sum_{k_2=0}^{g-1} \chi(C_{k_2})  \mathbb{L}^{ 2k_2} + \sum_{k_2'=0}^{g-2} \chi(C_{k_2'})  \LL^{ 5g-5-3k_2'})  (\frac{\LL^{ g}}{(1-\LL)(1-\LL^2)}) $$
$$ - (\sum_{k=0}^{g-2} \chi(C_{k})  (\mathbb{L}^{ k} + \mathbb{L}^{ 3g-3-2k}) + \chi(C_{g-1})  \mathbb{L}^{ g-1})  (\frac{\LL^{ 2g-1}(1+\LL)^{ 2}}{(1-\LL^{ 2})(1-\LL^{ 3})}) \}  \chi(J(C)) $$
and hence it is equal to
$$ \{ \sum_{k=0}^{g-2} \chi(C_k)  \frac{\LL^{ 2k+g}(1-\LL^{ g-1-k})(1-\LL^{ 4g-4-4k})}{(1-\LL)(1-\LL^{ 2})} \}  \chi(J(C)). $$ 

Therefore it remains to show that 
$$ ( \sum_{k_1=0}^{g-1} \chi(C_{k_1})  \mathbb{L}^{ k_1} + \sum_{k_1'=0}^{g-2} \chi(C_{k_1'})  \LL^{ 3g-3-2k_1'} )( \sum_{k_2=0}^{g-1} \chi(C_{k_2})  \mathbb{L}^{ 2k_2} + \sum_{k_2'=0}^{g-2} \chi(C_{k_2'})  \LL^{ 5g-5-3k_2'} ) $$
$$ + \{ \sum_{k=0}^{g-2} \chi(C_k)  \frac{\LL^{ 2k+g}(1-\LL^{ g-1-k})(1-\LL^{ 4g-4-4k})}{(1-\LL)(1-\LL^{ 2})} \}  \chi(J(C)) $$ 
is equal to 
$$ \sum_{k_1+k_2 < 2(g-1)} \chi(C_{k_1} \times C_{k_2})  (\mathbb{L}^{ k_1+2k_2} + \mathbb{L}^{ 8g-8-2k_1-3k_2}) + $$
$$ \sum_{k_1+k_2=2(g-1), k_1<g-1} \chi(C_{k_1} \times C_{k_2})  (\mathbb{L}^{ k_1+2k_2} + \mathbb{L}^{ 8g-8-2k_1-3k_2}) + \chi(C_{g-1} \times C_{g-1})  \LL^{ 3(g-1)}. $$

Now let us decompose the last sum into
$$ \sum_{k_1 \leq g-1, k_2 \leq g-1, (k_1,k_2) \neq (g-1,g-1)} \chi(C_{k_1} \times C_{k_2})  (\mathbb{L}^{ k_1+2k_2} + \mathbb{L}^{ 8g-8-2k_1-3k_2})  + \chi(C_{g-1} \times C_{g-1})  \LL^{ 3(g-1)} $$
$$ + \sum_{k_1 < g-1, g \leq k_2 \leq 2(g-1)-k_1} \chi(C_{k_1} \times C_{k_2})  (\mathbb{L}^{ k_1+2k_2} + \mathbb{L}^{ 8g-8-2k_1-3k_2}) $$
$$ + \sum_{k_2 < g-1, g \leq k_1 < 2(g-1)-k_2} \chi(C_{k_1} \times C_{k_2})  (\mathbb{L}^{ k_1+2k_2} + \mathbb{L}^{ 8g-8-2k_1-3k_2}) $$

From the Proposition, we see that 
$$ \sum_{k_1 < g-1, g \leq k_2 \leq 2(g-1)-k_1} \chi(C_{k_1} \times C_{k_2})  (\mathbb{L}^{ k_1+2k_2} + \mathbb{L}^{ 8g-8-2k_1-3k_2}) $$
$$ = \sum_{k_1 < g-1, g \leq k_2 \leq 2(g-1)-k_1} \chi(C_{k_1})  (\chi(J(C))  (\sum_{l=0}^{k_2-g} \LL^{ l}) + \chi(C_{2g-2-k_2})  \LL^{ k_2-g+1})  (\mathbb{L}^{ k_1+2k_2} + \mathbb{L}^{ 8g-8-2k_1-3k_2}) $$

$$ = \sum_{k_1 < g-1, g \leq k_2 \leq 2(g-1)-k_1} \chi(C_{k_1})  \chi(J(C))  (\sum_{l=0}^{k_2-g} \LL^{ l})  (\mathbb{L}^{ k_1+2k_2} + \mathbb{L}^{ 8g-8-2k_1-3k_2}) + $$
$$ \sum_{k_1 < g-1, g \leq k_2 \leq 2(g-1)-k_1} \chi(C_{k_1})  \chi(C_{2g-2-k_2})  \LL^{ k_2-g+1}  (\mathbb{L}^{ k_1+2k_2} + \mathbb{L}^{ 8g-8-2k_1-3k_2}) $$
and 
$$ \sum_{k_2 < g-1, g \leq k_1 < 2(g-1)-k_2} \chi(C_{k_1} \times C_{k_2})  (\mathbb{L}^{ k_1+2k_2} + \mathbb{L}^{ 8g-8-2k_1-3k_2}) $$
$$ = \sum_{k_2 < g-1, g \leq k_1 < 2(g-1)-k_2} \chi(C_{k_2})  (\chi(J(C))  (\sum_{l=0}^{k_1-g} \LL^l) + \chi(C_{2g-2-k_1})  \LL^{ k_2-g+1})  (\mathbb{L}^{ k_1+2k_2} + \mathbb{L}^{ 8g-8-2k_1-3k_2}) $$

$$ = \sum_{k_2 < g-1, g \leq k_1 < 2(g-1)-k_2} \chi(C_{k_2})  \chi(J(C))  (\sum_{l=0}^{k_1-g} \LL^{ l})  (\mathbb{L}^{ k_1+2k_2} + \mathbb{L}^{ 8g-8-2k_1-3k_2}) + $$
$$ \sum_{k_2 < g-1, g \leq k_1 < 2(g-1)-k_2} \chi(C_{k_2})  \chi(C_{2g-2-k_1})  \LL^{ k_1-g+1}  (\mathbb{L}^{ k_1+2k_2} + \mathbb{L}^{ 8g-8-2k_1-3k_2}). $$ \\

Therefore we can check that the terms containing $\chi(J(C))$ is the same from the following identity
$$ \frac{(x^{k+2g}-x^{3g-3})(1-x^{2g-2-2k})}{(1-x)(1-x^2)} - \frac{(x^{k+2g+1}-x^{2g+k-2})(1-x^{3g-3-3k})}{(1-x)(1-x^3)} $$
$$ +\frac{(x^{2k+g}-x^{5g-4-2k})(1-x^{g-2-k})}{(1-x)^2} - \frac{(x^{2k+g+1}-x^{4g-k-2})(1-x^{2g-4-2k})}{(1-x)(1-x^2)} $$
$$ =\frac{x^{2k+g}(1-x^{g-1-k})(1-x^{4g-4-4k})}{(1-x)(1-x^2)} $$
where $x$ is a formal variable. Therefore we obtain the desired result.
\end{proof}

\bigskip

\section{Final remarks and discussions}

We think that the one can apply the above strategy to obtain new motivic decompositions for other cases. We leave this task for future investigations.

\subsection{Completion of Grothendieck group}

Behrend and Dhillon computed the class of $M(r,L)$ in the dimensional completion of the Grothendieck ring of varieties in \cite{BD}. Hoskins and Lehalleur studied Voevodsky's motives of moduli stacks of vector bundles on curves in \cite{HL1, HL2}. Using results in \cite{BD, HL1, HL2} we have the following result using the same proof in the previous sections. 

\begin{theo}\label{decompositionVoevodsky}
(1) Let $r=2, d=1.$ Then $[ M(2,L) ]$ is equal to the following motivic class in $\widehat{K_0}(\bVar)$ (or in $K_0(\widehat{\DM}_{\gm})$). \\
$$ \sum_{k=0}^{g-2} [ C_{k} ]  (\mathbb{L}^{ k} + \mathbb{L}^{ 3g-3-2k}) + [ C_{g-1} ]  \mathbb{L}^{ g-1}. $$
(2) Let $r=3, d=1.$ Then $[ M(3,L) ]$ is equal to the following motivic class in $\widehat{K_0}(\bVar).$
$$ \sum_{k_1+k_2 < 2(g-1)} [ C_{k_1} \times C_{k_2} ]  (\mathbb{L}^{ k_1+2k_2} + \mathbb{L}^{ 8g-8-2k_1-3k_2}) + $$
$$ \sum_{k_1+k_2=2(g-1), k_1<g-1} [ C_{k_1} \times C_{k_2} ]  (\mathbb{L}^{ k_1+2k_2} + \mathbb{L}^{ 8g-8-2k_1-3k_2}) + [ C_{g-1} \times C_{g-1} ]  \LL^{3(g-1)}. $$
(3) For $r=2,3,$ the class of $M(r,L)$ has decomposition of the same type as above in $K_0(\widehat{\DM}_{\gm}).$
\end{theo}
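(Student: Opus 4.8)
The plan is to rerun the computations of Sections 4 and 5 verbatim, observing that each ingredient used there in $\widehat{K_0}(\Chow^{\eff})$ has an exact counterpart in $\widehat{K_0}(\bVar)$ and in $K_0(\widehat{\DM}_{\gm})$. Three ingredients enter the Chow argument: the class of $\Bun_{r,d}$ expressed through the motivic zeta function, the extraction of the unstable locus via the Harder--Narasimhan/Shatz stratification, and the motivic decomposition of symmetric products. First I would check that all three are available in the other two rings, and that the purely formal manipulations then produce the same answer.

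For $\widehat{K_0}(\bVar)$ the class of $\Bun_{\mathrm{SL}_r}$ is computed by Behrend and Dhillon in \cite{BD} as $\LL^{(r^2-1)(g-1)}\prod_{i=2}^{r} Z(C,\LL^{-i})$, which is the dimensional-completion analogue of the formula used through Theorem \ref{Bano;ind-variety} and Proposition \ref{motiviczeta}. The symmetric product decompositions are already established at the level of varieties in Proposition \ref{prop:symmpro in Grothendieck ring}, and the rewriting of the zeta function is Proposition \ref{DecZeta-var}. The unstable contribution is obtained, exactly as in the Chow case, from the Shatz stratification $Q_{r,d}=\bigcup Q_{(r,d)}^{ss}$ together with the multiplicativity $\chi(Q_{(r,d)}^{ss})=\prod_i\chi(Q_{r_i,d_i}^{ss})$; this recursion and the resulting inversion hold in $\widehat{K_0}(\bVar)$ by \cite{BD, GPHS}. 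With these in hand, the subtraction of the unstable part and the final collapse --- which in Section 5 reduces to the rational-function identity in the formal variable $x$ displayed at the end of the proof of Theorem \ref{rank3chow} --- is ring-independent, so substituting $x=\LL$ yields parts (1) and (2) in $\widehat{K_0}(\bVar)$.

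For part (3) I would transport the identity along the ring homomorphism $\chi_c\colon \widehat{K_0}(\bVar)\to K_0(\widehat{\DM}_{\gm})$ induced by $\bVar\to\DM_{\gm}$. Since $\chi_c$ sends the class of each smooth projective variety ($C_{k_1}\times C_{k_2}$, $J(C)$, and $M(r,L)$ itself, for which $h_c=h$) to its motivic class, and since the classes of $\Bun_{r,d}$ on the two sides agree by the Hoskins--Lehalleur comparison $\chi_c([\Bun_{r,d}])=[h_c(\Bun_{r,d})]$, applying $\chi_c$ to the identity of part (1)/(2) produces the same decomposition of $[M(r,L)]$ in $K_0(\widehat{\DM}_{\gm})$. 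Alternatively one can run the argument intrinsically, using the Hoskins--Lehalleur formula $h(\Bun_{r,d})=h(Jac(C))\otimes h(B\GG_m)\otimes\bigotimes_{i=1}^{r-1}Z(C,\LL^{\otimes i})$ together with Corollary \ref{coro:symmpro}, which holds in $K_0(\DM_{\gm}^{\eff})$.

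The main obstacle is not the formal algebra, which is identical to Sections 4 and 5, but the bookkeeping around the completions: one must verify that the Harder--Narasimhan recursion and the inversion it yields actually converge in the dimensional completions (so that the infinite Shatz sum defines an element of $\widehat{K_0}(\bVar)$ and of $K_0(\widehat{\DM}_{\gm})$), and that $\chi_c$ is continuous for these filtrations so that it commutes with the relevant infinite sums. The former is supplied by \cite{BD, GPHS, HL2}; for the latter I would check that $\chi_c$ respects the dimensional filtrations of Definition \ref{def:dimensional-completion} and Definition \ref{def:completion-dm}, which is exactly what makes the stated homomorphism well-defined.
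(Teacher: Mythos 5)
Your proposal is correct and follows essentially the same route as the paper's own proof: it transports the Section 4--5 computations into $\widehat{K_0}(\bVar)$ using the Behrend--Dhillon/Hoskins--Lehalleur class of the stack together with Proposition \ref{DecZeta-var} and the symmetric-product decomposition of Proposition \ref{prop:symmpro in Grothendieck ring}, subtracts the Harder--Narasimhan unstable contribution, and then applies $\chi_c$ to get part (3). The only cosmetic difference is that for rank $2$ the paper computes the unstable class by hand --- stratifying $\Pic^d(C)$ by Brill--Noether loci and counting extensions $0 \to L_1 \to E \to L_2 \to 0$ via $[\Ext^1(L_2,L_1)]/[\Hom(L_2,L_1)]$ --- whereas you source the same input from the Shatz-stratification/inversion machinery of \cite{BD, GPHS}.
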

\begin{proof}
(1) From \cite{HL2} and the Proposition \ref{DecZeta-var}, we have the following identity.

$$ [ \mathrm{Bun}_{2,L} ] = \LL^3 Z(C,\LL) $$ 
$$ = \sum_{k=0}^{g-2} [ C_{k} ]  (\mathbb{L}^{ k} + \mathbb{L}^{ 3g-3-2k}) + [ C_{g-1} ]  \mathbb{L}^{ g-1} + [ J(C) ]  (\frac{\LL^{ g}}{(\LL-1)(\LL^{ 2}-1)}) $$

Let $\mathrm{Bun}_{2,L}^{\mathrm{un}}$ be the motive of unstable part and we can compute it using Harder-Narasimhan filtration. We have the following identity.
$$ [ \mathrm{Bun}_{2,L} ] = [ M(2,L) ] + [ \mathrm{Bun}_{2,L}^{\mathrm{un}} ] $$

Let $E$ be an unstable rank 2 bundle with determinant $L$ on $C.$ From the Harder-Narasimhan filtration we see that there is the following short exact sequence
$$ 0 \to L_1 \to E \to L_2 \to 0 $$
where $L_1, L_2$ two line bundles such that $L_1 \otimes L_2 = L$ and $\deg L_1 > \deg L_2.$ We may assume that $\deg L=1$ and then $\deg L_1$ can be $1,2, \cdots .$ For fixed $L_1, L_2$ then the extensions are parametrized by $\PP \Ext^1(L_2, L_1).$ Let $d$ be the degree of $L_1.$ From Riemann-Roch formula we see that $\dim \Hom(L_2, L_1) - \dim \Ext^1(L_2, L_1) = 1 + \deg(L_1) - \deg(L_2) - g=1+d-(1-d)-g=2d-g.$

Let us define a locally closed subset of $J(C)$ as follows. 
$$ B_i=\{L_1 \in \Pic^d(C) | \dim \Hom^0(L_2, L_1)= i \} $$ 

Then there is the following natural decomposition.
$$  \Pic^d(C) = B_0 \sqcup B_1 \sqcup \cdots $$

Let $L_1 \in B_i.$ The motivic class of the stack parametrizing the extensions of the form 
$$ 0 \to L_1 \to E \to L_2 \to 0 $$
is isomorphic to the following class.
$$ [B_i]  [[\Ext^1(L_2,L_1)]/[\Hom(L_2,L_1)]] = [B_i]  \frac{\LL^{ g}}{\LL^{ 2d}} $$

Therefore the motivic Poincar\'e polymonial of the unstable bundles is as follows.
$$ [\Bun_{2,L}^{\mathrm{un}}] = [J(C)]  \frac{\LL^{ g}}{(\LL-1)} \sum_{d=1}^{\infty}\frac{1}{\LL^{ 2d}} = [J(C)]  (\frac{\LL^{ g}}{(\LL-1)(\LL^{ 2}-1)}) $$ 

Therefore, we obtain the following desired result.
$$ [M(2,L)] = [ \mathrm{Bun}_{2,L} ] - [J(C)]  (\frac{\LL^{ g}}{(\LL-1)(\LL^{ 2}-1)}) $$
$$ = \sum_{k=0}^{g-2} [C_{k}]  (\mathbb{L}^{ k} + \mathbb{L}^{ 3g-3-2k}) + [C_{g-1}]  \mathbb{L}^{ g-1} $$

(2) The proof is similar to the above case. We can compute the motivic class of unstable bundles as above and do the same computation as in the proof of Theorem \ref{rank3chow}. \\

(3) By applying $\chi_c$ to $M(r,L)$ and the motivic classes in (1), (2), we obtain the conclusion.
\end{proof}

\begin{rema}
The motivic decompositions discussed so far should be reflected via realization functors. For example, the above motivic decompositions imply interesting decompositions of Hodge diamonds of moduli spaces. 
\end{rema}

\subsection{Derived categories}

As we mentioned, Orlov suggested that derived categories of coherent sheaves and motives of algebraic varieties will be closely related in \cite{Orlov}. We expect that there will be semiorthogonal decompositions of derived categories of coherent sheaves which are compatible with the motivic decompositions we discussed so far. For example, for rank 3 case we expect that there will be compatible semiorthogonal decompositions. See Conjecture \ref{SODrank2} and Conjecture \ref{SODrank3} for a precise statements.

\subsection{Fukaya categories}

Being motivated by homological mirror symmetry (cf. \cite{KKOY}), we also expect there will be a corresponding decompositions of Fukaya categories and quantum cohomology groups of the moduli spaces. See \cite{Lee} for rank two cases and references therein for more details and evidences. For rank 3 case, we have the following conjecture.

\begin{conj}
The Karoubian completion of Fukaya category of $M(3,L)$ will have the following orthogonal decomposition.
$$ Fuk^{\pi}(M(3,L)) = \langle \cdots, Fuk^{\pi}(C_{k_1} \times C_{k_2}), Fuk^{\pi}(C_{k_1} \times C_{k_2}), \cdots, Fuk^{\pi}(C_{g-1} \times C_{g-1}) \rangle $$
where 
$ (k_1,k_2)$ is a pair of nonnegative integers satisfying $k_1+k_2 < 2(g-1)$ or $k_1+k_2=2(g-1), k_1<g-1.$
\end{conj}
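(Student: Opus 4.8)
The conjecture is structurally different from its $B$-side counterpart (Conjecture \ref{SODrank3}(1)) in that it asserts an \emph{orthogonal}, rather than a semiorthogonal, decomposition; the natural explanation, and the starting point of the proposed strategy, is that $M(3,L)$ is a Fano variety and hence a monotone symplectic manifold. For monotone targets the (split-closed) Fukaya category is expected to decompose as an orthogonal direct sum
$$ Fuk^{\pi}(M(3,L)) = \bigoplus_{\lambda} Fuk^{\pi}(M(3,L))_{\lambda} $$
indexed by the eigenvalues $\lambda$ of quantum multiplication by $c_1$ acting on $QH^{*}(M(3,L))$, with all morphism spaces between distinct summands vanishing. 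The plan is therefore to (i) determine the spectrum of $c_1 \star -$ together with the ranks of the eigenspaces, and (ii) identify each orthogonal summand with $Fuk^{\pi}(C_{k_1} \times C_{k_2})$ for the appropriate pair $(k_1,k_2)$.

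For step (i) I would compute $QH^{*}(M(3,L))$ and the operator $c_1 \star -$, using the numerical decomposition of Theorem \ref{rank3} as the organizing skeleton: the motivic Poincar\'e polynomial already records the additive building blocks $\chi(C_{k_1} \times C_{k_2})$ together with the shifts $\LL^{k_1+2k_2}$ and $\LL^{8g-8-2k_1-3k_2}$, so it predicts both the number of summands and their total Betti and Hodge data. This gives an exact consistency check, since the eigenvalue multiplicities must add up to the Poincar\'e polynomial of $\bigoplus Fuk^{\pi}(C_{k_1}\times C_{k_2})$. In rank $2$ the analogous count is compatible with the Floer-theoretic formula of Mu\~noz (cf.\ \cite[Conjecture 24]{Munoz} and \cite{Lee}), and I would first reproduce that case as a template before carrying out the rank $3$ spectral analysis.

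Step (ii) is the heart of the matter, and I see two routes. The first is via homological mirror symmetry: one would establish the $B$-side semiorthogonal decomposition (Conjecture \ref{SODrank3}(1)) and then transport it, using that for a Fano variety the eigenvalue summand $Fuk^{\pi}(M(3,L))_{\lambda}$ is governed by the Landau--Ginzburg superpotential of the mirror over its critical value $\lambda$; matching critical values with the pairs $(k_1,k_2)$ would yield the identification with $Fuk^{\pi}(C_{k_1}\times C_{k_2})$. The second, more intrinsic, route is to construct the relevant Lagrangians directly: the Fourier--Mukai functor $\Phi_{\cE}\colon D(C)\to D(M(3,L))$ of the introduction should admit an $A$-side analogue supported on Hecke-type correspondences, and iterating it one expects Lagrangian submanifolds fibred over the symmetric products whose Floer theory recovers $Fuk^{\pi}(C_{k_1}\times C_{k_2})$; one would then verify split-generation and read off orthogonality from the eigenvalue decomposition. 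The two copies appearing for each $(k_1,k_2)$ with $k_1+k_2 \ne 2(g-1)$ should correspond to the duality pairing $\lambda \leftrightarrow \overline{\lambda}$ of eigenvalues, mirroring the symmetric pair of shifts $\LL^{k_1+2k_2}$ and $\LL^{8g-8-2k_1-3k_2}$.

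The main obstacle is precisely the identification in step (ii): while orthogonality is essentially automatic once the monotone spectral decomposition is in place, realizing each summand as the Fukaya category of a product of symmetric products has no general machinery behind it. The HMS route presupposes both an explicit mirror for $M(3,L)$ and the still-open $B$-side decomposition of Conjecture \ref{SODrank3}(1), while the direct route requires constructing the Lagrangian correspondences explicitly and computing their full $A_{\infty}$-structure, including the quantum corrections that distinguish $Fuk^{\pi}$ from its classical cohomological shadow. For these reasons I regard the statement as genuinely conjectural; the contribution of the present paper, through Theorem \ref{rank3}, is to supply the precise numerical target that any such symplectic decomposition must match.
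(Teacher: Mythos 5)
The statement you are addressing is not proved in the paper at all: it appears only as a conjecture in the final section, motivated by Theorem \ref{rank3}, Orlov's philosophy relating motives and derived categories, homological mirror symmetry (cf.\ \cite{KKOY}), and the rank-2 Floer-theoretic formula of Mu\~noz \cite[Conjecture 24]{Munoz} as discussed in \cite{Lee}. So there is no proof to compare yours against, and you were right not to manufacture one: your text is a research program, not a proof, and it honestly says so in its final paragraph. As a program it is consistent with, and somewhat sharper than, the paper's own motivation --- in particular, your observation that the \emph{orthogonal} (rather than semiorthogonal) form of the decomposition should come from the eigenvalue decomposition of $QH^{*}(M(3,L))$ under quantum multiplication by $c_1$, available because $M(3,L)$ is monotone Fano, is exactly the mechanism the paper gestures at via \cite{Munoz} without spelling out, and your suggestion to match the two copies for $k_1+k_2\neq 2(g-1)$ with conjugate pairs of eigenvalues mirrors what happens in the rank-2 case.

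One claim in your sketch is overstated and worth flagging: that ``orthogonality is essentially automatic once the monotone spectral decomposition is in place.'' The spectral decomposition only separates summands attached to \emph{distinct} eigenvalues of $c_1 \star -$. The conjectured decomposition is indexed by pairs $(k_1,k_2)$, and nothing a priori prevents two distinct pairs (or the two copies attached to one pair) from sharing an eigenvalue; in rank 2 the eigenvalues happen to separate the blocks $C_k$, but for rank 3 one expects eigenvalues involving cube roots of unity and linear expressions in $(k_1,k_2)$, where coincidences are plausible. If an eigenvalue summand contains several conjectured blocks, orthogonality of those blocks requires an additional argument beyond the spectral decomposition. So step (i) of your plan is not merely a consistency check on Betti numbers: it must also verify that the eigenvalues distinguish the blocks, or else step (ii) acquires the extra burden of splitting individual eigenvalue summands. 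With that caveat, your assessment of the main gap --- identifying each summand with $Fuk^{\pi}(C_{k_1}\times C_{k_2})$, for which no general machinery exists and which presupposes either an explicit mirror or the open $B$-side Conjecture \ref{SODrank3}(1) --- matches the actual status of the problem, and your closing remark correctly locates the paper's contribution in supplying, through Theorem \ref{rank3}, the numerical target any such decomposition must reproduce.
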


\section*{Acknowledgements}
 
It is a pleasure to express our deep gratitude to M. S. Narasimhan for drawing our attention to this problem and providing insightful suggestions about the decompositions. We also thank him for many helpful discussions which took place during several visits to the Indian Institute of Science (Bangalore). We thank Indian Institute of Science for wonderful working conditions and kind hospitality. We also thank Gadadhar Misra for kind hospitality during our stay in IISc. The second named author thanks Minhyong Kim, Valery Lunts and Jinhyun Park for helpful discussions and encouragements. Parts of this work was done when KSL was a Young Scientist Fellow of IBS-CGP and he was partially supported by IBS-R003-Y1. He also thanks Ludmil Katzarkov and Simons Foundation for partially supporting this work via Simons Investigator Award-HMS. TG is supported by Ministerio de Ciencia e Innovaci\'on of Spain (grants MTM2016-79400-P, PID2019-108936GB-C21, and ICMAT Severo Ochoa project SEV-2015-0554) and CSIC (2019AEP151 and \textit{Ayuda extraordinaria a Centros de Excelencia 
Severo Ochoa} 20205CEX001).

\bibliographystyle{amsplain}

\end{document}